\theoremstyle{plain}
    \newtheorem{theorem}{Theorem}[section]
    \newtheorem*{theorem*}{Theorem}
    \newtheorem{proposition}[theorem]{Proposition}
    \newtheorem{lemma}[theorem]{Lemma}
    \newtheorem{corollary}[theorem]{Corollary}
    \newtheorem*{corollary*}{Corollary}
\theoremstyle{definition}
    \newtheorem{definition}[theorem]{Definition}
    \newtheorem*{notation}{Notation}
    \newtheorem{example}[theorem]{Example}
    \newtheorem{remark}[theorem]{Remark}
\def\Alphabet{A,B,C,D,E,F,G,H,I,J,K,L,M,N,O,P,Q,R,S,T,U,V,W,X,Y,Z}
\def\alphabet{a,b,c,d,e,f,g,h,i,j,k,l,m,n,o,p,q,r,s,t,u,v,w,x,y,z}
\def\endpiece{xxx}
\def\makeAlphabet[#1]{\expandafter\makeA#1,xxx,}
\def\makealphabet[#1]{\expandafter\makea#1,xxx,}
\def\makeA#1,{\def\temp{#1}\ifx\temp\endpiece\else%
\mkbb{#1}\mkfrak{#1}\mkbf{#1}\mkcal{#1}\mkscr{#1}\expandafter\makeA\fi}%
\def\makea#1,{\def\temp{#1}\ifx\temp\endpiece\else\mkfrak{#1}\mkbf{#1}\expandafter\makea\fi}%
\def\mkbb#1{\expandafter\def\csname bb#1\endcsname{\mathbb{#1}}}
\def\mkfrak#1{\expandafter\def\csname fr#1\endcsname{\mathfrak{#1}}}
\def\mkbf#1{\expandafter\def\csname b#1\endcsname{\mathbf{#1}}}
\def\mkcal#1{\expandafter\def\csname c#1\endcsname{\mathcal{#1}}}
\def\mkscr#1{\expandafter\def\csname s#1\endcsname{\mathscr{#1}}}
\def\makeop[#1]{\xmakeop#1,xxx,}
\def\mkop#1{\expandafter\def\csname #1\endcsname{{\mathrm{#1}}\,}} %
\def\xmakeop#1,{\def\temp{#1}\ifx\temp\endpiece\else\mkop{#1}\expandafter\xmakeop\fi}%
\def\makesymb[#1]{\xmakesymb#1,xxx,}
\def\mksymb#1{\expandafter\def\csname #1\endcsname{{\mathrm{#1}}}} %
\def\xmakesymb#1,{\def\temp{#1}\ifx\temp\endpiece\else\mksymb{#1}\expandafter\xmakeop\fi}%
\def\Fr{{\rm Fr}}
\def\Ker{{Ker}}
\def\dlim{\underrightarrow{\rm lim}}
\def\ilim{\underleftarrow{\rm lim}}
\def\Fr{\mathrm{Fr}}
\def\Ker{\mathrm{Ker}}
\def\Im{\mathrm{Im}}
\def\Id{\mathrm{Id}}
\def\Gal{\mathrm{Gal}}
\def\tr{\mathrm{tr}}
\def\Res{\mathrm{Res}}
\def\Cor{\mathrm{Cor}}
\def\dlim{\underrightarrow{\lim}}
\def\ilim{\underleftarrow{\lim}}
\def\pkappa{\widetilde \kappa}
\def\varep{\varepsilon}
\def\ur{\mathrm{ur}}
\def\tr{\mathrm{tr}}
\def\sgn{\mathrm{sgn}}
\def\Norm{\mathrm{N}}
\begin{document}




\title[A generalization of Darmon's conjecture]{A generalization of Darmon's conjecture for Euler systems for general $p$-adic representations}
\author{Takamichi Sano}


\email{tkmc310@a2.keio.jp}
\address{Department of Mathematics, Keio University,\\3-14-1 Hiyoshi, Kohoku-ku, Yokohama, 223-8522, Japan}
\thanks{The author is supported by Grant-in-Aid for JSPS Fellows.}
\begin{abstract}
Darmon's conjecture on a relation between cyclotomic units over real quadratic fields and certain algebraic regulators was recently solved by Mazur and Rubin by using their theory of Kolyvagin systems. In this paper, we formulate a ``non-explicit" version of Darmon's conjecture for Euler systems defined for general $p$-adic representations, and 
prove it. In the process of the proof, we introduce a notion of ``algebraic Kolyvagin systems", and develop their properties.
\end{abstract}


%



\maketitle

\section{Introduction} 
One of the main themes in number theory is to study mysterious relations between values of zeta functions and arithmetical objects. Cyclotomic units are known as the elements which 
are closely related to the values of zeta functions. Their important properties are axiomatized in the theory of Euler systems, 
initiated by Kolyvagin (\cite{K}). The method of Euler systems is a powerful tool for bounding the size of Selmer groups. The collection of 
cyclotomic units is a typical example of an Euler system. 

In \cite{D}, inspired by the work of Gross (\cite{G}), Darmon conjectured a ``refined class number formula", which relates cyclotomic units over 
real quadratic fields with certain algebraic regulators. This conjecture is regarded as a refinement of classical class number formula of Dirichlet. 
Recently, Mazur and Rubin solved the ``except $2$-part" of Darmon's conjecture using their theory of Kolyvagin systems (\cite{MR2}). 

In this paper, as an attempt to generalize Darmon's conjecture, we formulate its analogue for Euler systems defined for general $p$-adic representations. Under the standard hypotheses 
of Kolyvagin systems (including that the core rank is equal to 1), we prove that our generalized Darmon's conjecture is true (see Theorem \ref{mainthm}). 
Our formulation is, however, not exactly a 
generalization of Darmon's conjecture. Let us explain the difference between our fomulation and the original conjecture of Darmon. Darmon's formulation is as follows. Let $F$ be a real quadratic field, and $\ell_1,\ldots, \ell_\nu$ be distinct prime numbers. Suppose that all $\ell_i$ 
split completely in $F$, for simplicity. Put $n=\ell_1 \cdots \ell_\nu$ ($n=1$ when $\nu=0$), and let $\zeta_n$ be 
a primitive $n$-th root of unity. Darmon defined the ``theta-element" $\theta_n'$ by 
$$\theta_n'=\sum_{\sigma \in {\rm Gal}(F(\zeta_n)/F)}\sigma\alpha_n\otimes \sigma \in F(\zeta_n)^\times \otimes_{\mathbb Z}\mathbb Z[{\rm Gal}(F(\zeta_n)/F)],$$
where $\alpha_n \in F(\zeta_n)^\times$ is a certain cyclotomic unit, and conjectured that the following equality holds in $(F(\zeta_n)^\times /\{ \pm 1\})\otimes_\mathbb Z \mathbb Z[{\rm Gal}(F(\zeta_n)/F)]/I_n^{\nu+1}$ ($I_n$ is the augmentation ideal of $\mathbb Z[{\rm Gal}(F(\zeta_n)/F)]$):
\begin{eqnarray}
\theta_n'=-h_nR_n, \label{dar}
\end{eqnarray}
where $h_n$ is the $n$-class number of $F$ (that is, the order of the Picard group of $\Spec(\cO_F[1/n])$), and $R_n \in F^\times \otimes_\mathbb Z I_n^\nu/I_n^{\nu+1}$ is an ``algebraic regulator". This algebraic regulator $R_n$ is defined by using a basis of $(1-\tau)\cO_F[1/n]^\times$ ($\tau$ is the non-trivial element of $\Gal(F/\bbQ)$) and the local reciprocity maps at places in $F$ lying above $\ell_i$'s. For more details, see \cite{D} and \cite{MR2}. 

Our formulation replaces the cyclotomic unit by an Euler system for a $p$-adic representation, the class number by the order of the Selmer group, the algebraic regulator 
by a {\it module} of algebraic regulators. 
Let $p$ be an odd prime, and $T$ be a $p$-adic representation of the absolute Galois group of $\mathbb{Q}$. 
Fix $M$, a power of $p$, and put $A=T/MT$. 
Let $\ell_1,\ldots,\ell_\nu$ be distinct prime numbers satisfying certain conditions. 
As before, put $n=\ell_1\cdots \ell_\nu$. Let $\mathbb Q(n)$ be the maximal $p$-subextension of $\mathbb Q$
inside $\mathbb Q(\zeta_n)$. Suppose that there exists an Euler system $c=\{c_n\}_n  \in \prod_n H^1(\mathbb Q(n),T)$ for $T$. Following Darmon, define the theta element $\theta_n(c)$ by 
$$\theta_n(c)=\sum_{\sigma \in {\rm Gal}(\mathbb Q(n)/\mathbb Q)}\sigma c_n \otimes \sigma.$$
We construct a submodule $\mathcal R_n \subset H^1(\mathbb Q,A)\otimes_\mathbb Z I_n^\nu/I_n^{\nu+1}$ as an analogue of Darmon's algebraic regulator, where $I_n$ is the augmentation ideal of $\bbZ[\Gal(\bbQ(n)/\bbQ)]$. Our formulation of the generalization of Darmon's conjecture is as follows: we have the following in $H^1(\mathbb Q(n),A)\otimes_\mathbb Z \mathbb Z[{\rm Gal}(\mathbb Q(n)/\mathbb Q)]/I_n^{\nu+1}$:
\begin{eqnarray}
\theta_n(c)\in |{\rm Sel}_{np}(\mathbb Q, A^\ast)|\mathcal R_n, \label{gdar}
\end{eqnarray}
where $A^\ast$ is the Kummer dual of $A$, ${\rm Sel}_{np}(\mathbb Q, A^\ast)$ is the Selmer group of $A^\ast$ of 
the elements which restrict to zero at the places dividing $np$, and $|\cdot|$ denotes the order.

Thus, our formulation (\ref{gdar}) is ``non-explicit" in the sense that the algebraic regulator in the right hand side is not determined explicitly. In this sense, our formulation is weaker than the original conjecture of Darmon  (\ref{dar}) when the Euler system is the system of cyclotomic units. But our formulation has a simple application. Taking $n=1$, we obtain the following:
$${\rm ord}_p(|{\rm Sel}_{p}(\mathbb Q, A^\ast)|) \leq {\rm Ind}(c),$$
where ${\rm Ind}(c)=\sup \{ m \ | \ c_1 \in p^m H^1(\mathbb Q,A) \} $. This is a quite famous result which is known as a typical application of the theory of Euler systems (see \cite[Theorem 2.2.2]{R} and \cite[Corollary 4.4.5]{MR1}). Our method gives another proof of this famous result.

A key observation of this paper lies in {\it defining a notion of ``algebraic Kolyvagin systems"}, which 
generalizes the notion of original Kolyvagin systems (see \S \ref{secKoly}). We define four different modules of algebraic Kolyvagin systems, called $\theta$-Kolyvagin systems, derived-Kolyvagin systems, pre-Kolyvagin systems, and (simply) Kolyvagin systems. The $\theta$-Kolyvagin system is the system whose axioms are satisfied by the collection $\{\theta_n(c)\}_n$ of the theta elements. The derived-Kolyvagin system is the system whose axioms are satisfied by the collection $\{\kappa_n'\}_n$ of the Kolyvagin's derivative classes of $c$. The pre-Kolyvagin system is an analogue of the $\theta$-Kolyvagin system. The system which we call simply Kolyvagin system is a direct generalization of the 
original Kolyvagin system. At a glance, these four modules of algebraic Kolyvagin systems may have different structures, but we prove that they are all isomorphic (see Theorem \ref{thmKS}). 
This observation is useful in some aspects; firstly, we can prove that $\{\theta_n(c)\}_n$ is a $\theta$-Kolyvagin system by reducing to show that the Kolyvagin's derivative classes $\{\kappa_n'\}_n$ of $c$ satisfy the axioms of the derived-Kolyvagin systems (see Proposition \ref{proptheta}); secondly, we can apply Mazur-Rubin's theory of Kolyvagin systems to other Kolyvagin systems. 

In \cite[Appendix B]{MR2}, Howard constructed ``regulator-type" Kolyvagin systems. We extend this construction to other Kolyvagin systems. We introduce a new system, which we call ``unit system", to treat Howard's construction more systematically (see Definition \ref{defu}). We interpret Howard's construction as a ``regulator map" from the module of unit systems to that of Kolyvagin systems (see Definition \ref{defreg}). We give analogues of this regulator map for other Kolyvagin systems, and prove the natural compatibility with the isomorphisms between different Kolyvagin systems (see Theorem \ref{thmreg}). We apply Mazur-Rubin's theory to know that the regulator map is surjective (see Theorem \ref{thmsurj}). From this, we know that the system of the theta elements, which forms a $\theta$-Kolyvagin system, is in the image of the regulator map. This says in fact that $\theta_n(c)\in |{\rm Sel}_{np}(\mathbb Q, A^\ast)|\mathcal R_n$ holds. Thus, we prove our main theorem. 

After the author wrote the first version of this paper, he found another generalization of Darmon's conjecture using Rubin-Stark elements (see \cite[Conjecture 3]{S}). He proved that, under some assumptions, most of this conjecture is deduced from the ``equivariant Tamagawa number conjecture (ETNC)" (\cite[Conjecture 4 (iv)]{BF}) for Tate motives (see \cite[Theorem 3.22]{S}). By the results of Burns and Greither (\cite{BG}), and of Flach (\cite{F}), we know that the ETNC for Tate motives is true for abelian fields. Using this fact, we know that the author's result (\cite[Theorem 3.22]{S}) gives another proof of the ``except $2$-part" of Darmon's conjecture, which was first proved by Mazur and Rubin in \cite{MR2} (see \cite[\S 4]{S} for the detail). The author hopes that there are some connections between the ETNC for general motives and the main result of this paper. We remark that, the paper \cite{MRGm} of Mazur and Rubin, in which essentially the same conjecture as \cite[Conjecture 3]{S} is formulated, appeared in arXiv after the author wrote the first version of the paper \cite{S} (see \cite[Conjecture 5.2]{MRGm}). 

The paper \cite{MRh} of Mazur and Rubin concerning higher rank Kolyvagin systems also appeared in arXiv after the author wrote this paper. In Definition \ref{defkoly}, the author proposed a definition of Kolyvagin systems of rank $r$, but when $r>1$ this definition is slightly different from the definition by Mazur and Rubin in \cite[Definition 10.4]{MRh}. The essentially same system as the ``unit system" introduced in this paper (see Definition \ref{defu}) is also defined by Mazur and Rubin in a rather neater way in \cite[Definition 7.1]{MRh}, and they call it ``Stark system". One of the main results of their paper (\cite[Theorem 12.4]{MRh}), which states that, under some assumptions, there is an isomorphism between ``Stark systems" and ``stub Kolyvagin systems", can be regarded as a generalization of our Theorem \ref{thmsurj}. 

The organization of this paper is as follows. \S \ref{secalg} is a short preparation from algebra. In \S \ref{statement}, after the setting of some notations and the review of some facts from Galois cohomology, we state our main theorem (Theorem \ref{mainthm}). In \S\S \ref{secKoly} and \ref{secreg}, we develop the theory of algebraic Kolyvagin systems. Finally in 
\S \ref{secpr} we prove Theorem \ref{mainthm}.

\begin{notation}

We fix an algebraic closure $\overline \bbQ$ of $\bbQ$, and any 
algebraic extension of $\bbQ$ is considered to be in $\overline \bbQ$. For each place $v$ of $\bbQ$, we choose a place $w$ of $\overline \bbQ$ 
above $v$, and fix it. By the decomposition (resp. inertia) group of $v$ in $G_\bbQ=\Gal(\overline \bbQ/\bbQ)$ we mean the decomposition (resp. inertia) group of $w$. The absolute Galois group of $\bbQ_v$ is identified with the decomposition group of $v$ in $G_\bbQ$. 

For $m \in \bbZ_{\geq1}$, $\mu_m$ denotes the group of all the $m$-th roots of unity in $\overline \bbQ$.

For a field $F$, and a continuous $\Gal(\overline F/F)$-module $M$ (where $\overline F$ is a fixed separable closure of $F$), we denote
$$H^i(F,M)=H^i_{\rm{cont}}(\Gal(\overline F/F),M),$$
where $H^i_{\rm{cont}}$ is the continuous cochain cohomology (\cite{T}). 

If $G$ is a profinite group, and $M$ is a continuous $G$-module, we denote for $\tau \in G$ 
$$M^{\tau=1}=\{a \in M \ | \ \tau a=a  \}.$$
\end{notation}

\section{Exterior powers} \label{secalg}

Let $A$ be a ring, $B$ be an $A$-algebra, and $M$ be an $A$-module. If $f \in \Hom_A(M, B)$, then there is an $A$-homomorphism
$$\bigwedge_A^rM \longrightarrow \left( \bigwedge_A^{r-1}M \right) \otimes_A B $$
$$ m_1 \wedge \cdots  \wedge m_r \mapsto \sum_{i=1}^{r}(-1)^{i-1}m_1 \wedge \cdots \wedge m_{i-1}
\wedge m_{i+1} \wedge \cdots \wedge m_r\otimes f(m_i), $$
for all $r \in \bbZ_{\geq 1}$. This map is also denoted by $f$. Moreover, the induced $B$-homomorphism 
$$\left(\bigwedge_A^rM \right) \otimes_A B \longrightarrow \left( \bigwedge_A^{r-1}M \right) \otimes_A B $$
is also denoted by $f$. Then the following $B$-homomorphism is well-defined for $r, s \in \bbZ_{\geq 1}$ with $r \geq s$ :
$$ \bigwedge_B^s \Hom_A(M, B) \longrightarrow \Hom_A\left(  \bigwedge_A^rM , \left( \bigwedge_A^{r-s}M \right)\otimes_A B \right), $$
$$ f_1\wedge \cdots \wedge f_s \mapsto \left(m_1\wedge \cdots \wedge m_r \mapsto (f_1\circ \cdots \circ f_s)(m_1\wedge \cdots \wedge m_r)\right).$$
We define $(f_1 \wedge \cdots \wedge f_s)(m_1\wedge \cdots \wedge m_r)$ by
$$(f_1\circ \cdots \circ f_s)(m_1\wedge \cdots \wedge m_r).$$
Note that our construction of the map $f_1\wedge \cdots \wedge f_s$ is slightly different from that in \cite[\S 1.2]{R2} ($f_1\wedge \cdots \wedge f_s$ is defined by $f_s \circ \cdots \circ f_1$ in loc. cit.).

\begin{remark}
Using the above notations, we have
$$(f_1\wedge \cdots \wedge f_r)(m_1\wedge \cdots \wedge m_r)=\left|
\begin{array}{ccc}
	f_r(m_1) &\cdots &f_r(m_r) \\ 
	\vdots & \ddots & \vdots \\
	f_1(m_1) & \cdots & f_1(m_r)
\end{array}
\right| \in B$$
and 
$$(f_1\wedge \cdots \wedge f_{r-1})(m_1\wedge \cdots \wedge m_r)=\left|
\begin{array}{cccc}
	f_{r-1}(m_1) &\cdots & &f_{r-1}(m_r) \\ 
	\vdots & \ddots & & \vdots \\
	f_1(m_1) & \cdots & & f_1(m_r)\\
	m_1 & \cdots & & m_r
\end{array}
\right| \in M\otimes_A B.$$
The first equality is shown by induction on $r$ as follows. When $r=1$, the left hand side is equal to $f_1(m_1)$ by definition, and this is equal to the 
right hand side. When $r>1$, note first that by the inductive hypothesis we see that $(f_1 \wedge \cdots \wedge f_{r-1})(m_1 \wedge \cdots \wedge m_{i-1} \wedge m_{i+1} \wedge \cdots \wedge m_r)$ is equal to the determinant of the matrix obtained by removing the first row and the $i$-th column from the matrix of the right hand side. We have
\begin{eqnarray}
& &(f_1\wedge \cdots \wedge f_r)(m_1\wedge \cdots \wedge m_r) \nonumber \\
&=&\sum_{i=1}^r(-1)^{i-1}f_r(m_i)(f_1 \wedge \cdots \wedge f_{r-1})(m_1 \wedge \cdots \wedge m_{i-1} \wedge m_{i+1} \wedge \cdots \wedge m_r) \nonumber \\
&=&\left|
\begin{array}{ccc}
	f_r(m_1) &\cdots &f_r(m_r) \\ 
	\vdots & \ddots & \vdots \\
	f_1(m_1) & \cdots & f_1(m_r)
\end{array}
\right|, \nonumber 
\end{eqnarray}
where the second equality is obtained by the cofactor expansion along the first row. This completes the proof.
\end{remark}

\section{The statement of the main theorem} \label{statement}

The aim of this section is to state our main theorem (Theorem \ref{mainthm}). First, we set some notations. Let $p$ be an odd prime, and fix a power of $p$, which is denoted by $M$. Let $T$ be a $p$-adic representation 
of the absolute Galois group of $\bbQ$ with coefficients in $\bbZ_p$, that is, $T$ is a free $\bbZ_p$-module of finite rank with a continuous $\bbZ_p$-linear action of $G_\bbQ=\Gal(\overline \bbQ /\bbQ)$. As usual, we assume that $T$ is unramified at almost all places of $\bbQ$, that is, for all but finitely many places $v$ of $\bbQ$, the inertia group of $v$ in $G_\bbQ$ acts trivially on $T$. We write $A=T/MT$. Fix $\Sigma$, a set of places of $\bbQ$, such that
$$\Sigma \subset \{ \ell \ | \ \mbox{$\ell$ is a prime satisfying $(*)$} \},$$
where $(*)$ is as follows: 
$$(*) \left\{
\begin{array}{l}
T \mbox{ is unramified at } \ell, \\
M \mbox{ divides } \ell -1, \\
A/(\Fr_\ell-1)A \simeq \bbZ/M\bbZ, \\
\end{array} \right.$$
where $\Fr_\ell $ is the arithmetic Frobenius at $\ell$. 

Next, put $\cN=\cN(\Sigma)=\{ \mbox{ square-free products of primes in }\Sigma  \ \}$. We suppose $1\in\cN$, for convention. Note that 
$\cN$ is naturally identified with the family of all the finite subsets of $\Sigma$ (with this identification, $1 \in \cN$ corresponds to the empty set $\emptyset \subset \Sigma$). This observation will be used later, in \S \ref{secKoly}. 

For every $\ell \in \Sigma$, put 
$$P_\ell(x)=\det(1-\Fr_\ell x|T)\in \bbZ_p[x],$$
where the right hand side means the characteristic polynomial with respect to the action of $\Fr_\ell$ on $T$. Note that $P_\ell(1) \equiv 0 \ (\mod \ M)$, since $A/(\Fr_\ell-1)A \simeq \bbZ/M\bbZ$ (see \cite[Lemma 1.2.3]{MR1}). Put
$$Q_\ell(x)=\frac{P_\ell(x)-P_\ell(1)}{x-1} \ \mod \ M \in \bbZ/M\bbZ[x].$$
This is the unique polynomial such that 
$$(x-1)Q_\ell(x)\equiv P_\ell(x) \ \mod \ M$$
(see \cite[Lemma 4.5.2]{R} or \cite[Definition 1.2.2]{MR1}).

Next, for every $n \in \cN$, put
$$G_n=\Gal(\bbQ(n)/\bbQ),$$
where $\bbQ(n)$ is the maximal $p$-subextension of $\bbQ$ inside $\bbQ(\mu_n)$. Note that we have a natural isomorphism 
$G_n\simeq\bigoplus_{\ell | n}G_\ell.$ For every $\ell \in \Sigma$, we define a generator $\sigma_\ell$ of $G_\ell$ as follows. Fix 
a generator $\xi$ of $\bbZ_p$-module $\ilim \mu_{p^m}$. Since we fixed the embedding $\overline \bbQ \hookrightarrow \overline \bbQ_\ell$, $\ilim \mu_{p^m}$ is also regarded as a subgroup of $\ilim \overline \bbQ_\ell^\times$. By Kummer theory, we have a canonical isomorphism 
$$\Gal(\bbQ_\ell^{\ur}(\ell^{1/{p^\infty}})/\bbQ_\ell^{\ur}) \stackrel{\sim}{\longrightarrow} \ilim \mu_{p^m} \quad ; \quad \sigma \mapsto (\frac{\sigma (\ell^{1/{p^m}})}{\ell^{1/{p^m}}})_m,$$
 where $\bbQ_\ell^{\ur}$ is the maximal unramified extension of $\bbQ_\ell$. We also have a natural surjection  $\Gal(\bbQ_\ell^{\ur}(\ell^{1/{p^\infty}})/\bbQ_\ell^{\ur}) \rightarrow G_\ell$, so we have a surjection $\ilim \mu_{p^m} \rightarrow G_\ell$. We define 
$\sigma_\ell \in G_\ell$ to be the image of $\xi\in \ilim \mu_{p^m}$ by this surjection.

For $n \in \cN$, we denote $I_n$ the augmentation ideal of $\bbZ[G_n]$. Note that if ${\ell} {\not{|}} n$, then we have 
$$P_\ell(\Fr_\ell) \otimes 1 \in I_n \otimes \bbZ/M\bbZ,$$
since $P_\ell(1) \equiv 0 \ (\mod M)$ as we mentioned above, where $\Fr_\ell$ is naturally regarded as an element of $G_n$ (note that since $\ell$ is prime to $n$, $\ell$ is unramified in $\bbQ(n)$). Therefore, we consider the image of $P_\ell(\Fr_\ell) \otimes 1$ in $I_n/I_n^2 \otimes \bbZ/M\bbZ$, and denote it also by $P_\ell(\Fr_\ell)\otimes 1$.

We next define important maps $v_\ell$, $u_\ell$, and $\varphi_\ell$ for $\ell \in \Sigma$. As a preliminary, we review some facts on Galois cohomology.

For $\ell \in \Sigma$, the unramified cohomology group at $\ell$ is defined by
$$H_{\ur}^1(\bbQ_\ell,A)=H^1(\bbQ_\ell ^{\ur}/\bbQ_\ell,A).$$
There is a canonical isomorphism:
$$H_{\ur}^1(\bbQ_\ell,A) \simeq A/(\Fr_\ell-1)A,$$
which is obtained by evaluating $\Fr_\ell\in\Gal(\bbQ_\ell ^{\ur}/\bbQ_\ell)$ to 1-cocycles representing elements of $H_{\ur}^1(\bbQ_\ell,A)$  (see \cite[Lemma B.2.8]{R} or \cite[Lemma 1.2.1 (i)]{MR1}).

There is a canonical decomposition:
$$H^1(\bbQ_\ell,A)\simeq H_{\tr}^1(\bbQ_\ell,A)\oplus H_{\ur}^1(\bbQ_\ell,A),$$
where $H_{\tr}^1(\bbQ_\ell,A):=H^1(\bbQ_\ell(\mu_\ell)/\bbQ_\ell,A^{G_{\bbQ_\ell(\mu_\ell)}})$ is called the transverse cohomology group at $\ell$, and naturally identified with $\Hom(G_\ell,A^{\Fr_\ell=1})$ (see \cite[Lemma 1.2.1 (ii) and Lemma 1.2.4]{MR1}). 
We remark that to get this decomposition, the assumption $M| \ell-1$ is needed. 

Now we start to define $v_\ell$, $u_\ell$, and $\varphi_\ell$.

First, the definition of $v_\ell$ is as follows: 
\begin{eqnarray}
v_\ell :  H^1(\bbQ, A) &\longrightarrow& H^1(\bbQ_\ell, A) \nonumber \\
&\longrightarrow& H^1_{\tr}(\bbQ_\ell,A)=\Hom(G_\ell,A^{\Fr_\ell=1}) \nonumber \\
&\stackrel{\sim}{\longrightarrow}& A^{\Fr_\ell=1} \simeq \bbZ/M\bbZ, \nonumber
\end{eqnarray}
where the first arrow is the localization map at $\ell$, the second is the natural projection, the third isomorphism is 
obtained by evaluating $\sigma_\ell \in G_\ell$ (recall that $\sigma_\ell$ is a fixed generator of $G_\ell$), and the last (non-canonical) isomorphism follows by noting that $A/(\Fr_\ell-1)A\simeq\bbZ/M\bbZ$ (see \cite[Lemma 1.2.3]{MR1}). We fix the last isomorphism.

Next, we define the map $u_\ell$ as follows:
\begin{eqnarray}
u_\ell  :  H^1(\bbQ,A) &\longrightarrow& H^1(\bbQ_\ell,A) \nonumber \\
&\longrightarrow& H^1_{\ur}(\bbQ_\ell,A)=A/(\Fr_\ell-1)A \nonumber \\
&\stackrel{-Q_\ell(\Fr_\ell^{-1})}{\longrightarrow}& A^{\Fr_\ell=1}=\bbZ/M \bbZ, \nonumber
\end{eqnarray}
where the first arrow is the localization at $\ell$, and the second is the natural projection. The third arrow is defined by 
$$A/(\Fr_\ell-1)A \longrightarrow A^{\Fr_\ell=1}\quad ; \quad \bar a \mapsto -Q_\ell(\Fr_\ell^{-1})a $$
(the well-definedness is easily verified by using the Cayley-Hamilton theorem). This is in fact an isomorphism, see \cite[Corollary A.2.7]{R} for the proof. Note that we use $-Q_\ell(\Fr_\ell^{-1})$ instead of $Q_\ell(\Fr_\ell^{-1})$ (this turns out to be meaningful when we see Example \ref{exrec} below). The last identification $A^{\Fr_\ell=1}=\bbZ/M\bbZ$ in the definition of $u_\ell$ above is obtained by the fixed isomorphism when we defined $v_\ell$.

Finally, we define $\varphi_\ell$ as follows:
$$\varphi_\ell:H^1(\bbQ,A) \longrightarrow \mathop{\ilim}_{n \in\cN}(I_{n}/I_{n}^2 \otimes \bbZ/M\bbZ) \quad ; \quad a \mapsto -(\sigma_\ell-1) \otimes u_\ell(a) -P_\ell(\Fr_\ell) \otimes v_\ell(a),$$
where the inverse limit in the right hand side is taken with respect to the natural restriction map of Galois groups, namely, if $n, m \in\cN$ and $n |m$, the morphism from $I_m/I_m^2 \otimes \bbZ/M\bbZ$ to $I_n/I_n^2 \otimes \bbZ/M\bbZ$ is induced by the natural surjection $G_m \rightarrow G_n$. Note that $P_\ell(\Fr_\ell) \otimes 1$ is naturally 
regarded as an element of $\mathop{\ilim}_{n \in\cN, \ell|n}(I_{n/\ell}/I_{n/\ell}^2 \otimes \bbZ/M\bbZ)$. Since we have the canonical isomorphism 
$$(I_\ell/I_\ell^2 \otimes \bbZ/M\bbZ)\oplus \mathop{\ilim}_{n \in\cN, \ell | n} (I_{n/\ell}/I_{n/\ell}^2 \otimes \bbZ/M\bbZ) \simeq \mathop{\ilim}_{n \in\cN}(I_n/I_n^2 \otimes \bbZ/M\bbZ),$$
we see that $-(\sigma_\ell-1)\otimes u_\ell(a)-P_\ell(\Fr_\ell) \otimes v_\ell(a)$ lies in $\mathop{\ilim}_{n \in\cN}(I_n/I_n^2 \otimes \bbZ/M\bbZ)$, hence $\varphi_\ell$ is defined. 

\begin{example} \label{exrec}
Take $T=\bbZ_p(1)=\ilim \mu_{p^m}$, and $A=T/MT=\mu_M$. Take $\ell \in \Sigma$. Suppose $a\in \bbQ^\times/(\bbQ^\times)^M \simeq H^1(\bbQ,A)$, and 
$$a=\ell^i e \quad \mbox{in }\bbQ_\ell^\times/(\bbQ_\ell^\times)^M,$$
where $i\in \bbZ/M\bbZ$ and $e\in \mu_M$ (note that $i$ and $e$ are uniquely determined for the image of $a$ in $\bbQ_\ell^\times/(\bbQ_\ell^\times)^M$). 
If we identify $\bbZ/M\bbZ=\mu_M$ by fixing a primitive $M$-th root of unity, then we see that 
$$v_\ell(a)=i$$
and 
$$u_\ell(a)=e$$
(note that since $P_\ell(x)=1-\ell x \equiv 1-x \ (\mod \ M)$, we have $Q_\ell(x)=-1$). We see that $\varphi_\ell$ agrees with the following map:
\begin{eqnarray}
H^1(\bbQ,A) \simeq \bbQ^\times/(\bbQ^\times)^M &\longrightarrow& \bbQ_\ell^\times/(\bbQ_\ell^\times)^M \nonumber \\
& \stackrel{{\rm{rec}}_\ell}{\longrightarrow}& \ilim (G_n \otimes \bbZ/M\bbZ) \nonumber \\
&\stackrel{\sim}{\longrightarrow}& \ilim (I_n/I_n^2 \otimes \bbZ/M\bbZ),\nonumber 
\end{eqnarray}
where ${\rm{rec}}_\ell$ is the map induced by the local reciprocity map at $\ell$, and the last isomorphism is given by $\sigma \mapsto \sigma-1$.
\end{example}

We put 
$$G(n)=\bigoplus_{i=0}^\infty I_n^i/I_n^{i+1}\otimes \bbZ/M\bbZ$$ 
for $n\in\cN$, where $I_n^0$ is understood to be $\bbZ[G_n]$ (so we have $I_n^0/I_n^1=\bbZ$). $G(n)$ has a structure of graded $\bbZ/M\bbZ$-algebra, and we can regard $\varphi_\ell$ as a homomorphism from $H^1(\bbQ,A)$ to a $\bbZ/M\bbZ$-module $\mathop{\ilim}_{n \in \cN}G(n)$, that is, $\varphi_\ell \in \Hom_{\bbZ/M\bbZ}(H^1(\bbQ,A), \mathop{\ilim}_{n \in \cN}G(n))$.

We define $\varphi_\ell^n$ to be the composition of the projection to $G(n)$ followed by $\varphi_\ell$, that is,
$$\varphi_\ell^n: H^1(\bbQ,A) \stackrel{\varphi_\ell}{\longrightarrow} \mathop{\ilim}_{n \in \cN}G(n) \longrightarrow G(n).$$

We denote throughout this paper $\cF$ the canonical Selmer structure on $T$ in the sense of \cite[Definition 3.2.1]{MR1}. For $n \in\cN$, we recall that the $n$-modified Selmer group $H^1_{\cF^n}(\bbQ,A)$ is defined by
$$H^1_{\cF^n}(\bbQ,A)=\{ a\in H^1(\bbQ,A) \ | \ a_\ell \in H^1_{\cF}(\bbQ_\ell,A) \mbox{ for any }\ell {\not{|}} n \},$$
where $a_\ell$ is the image of $a$ by the localization at $\ell$. We also recall that the $n$-strict dual Selmer group $H^1_{(\cF^\ast)_n}(\bbQ,A^\ast)$ is defined by
$$H^1_{(\cF^\ast)_n}(\bbQ,A^\ast)=\{ a \in H^1_{\cF^\ast}(\bbQ,A^\ast) \ | \ a_\ell=0 \mbox{ for any }\ell|n \},$$
where $A^\ast=\Hom(A,\mu_M)$ is the Kummer dual of $A$, and $\cF^\ast$ is the dual Selmer structure of $\cF$. See \cite[Example 2.1.8 and Definition 2.3.1]{MR1}.
\begin{definition} \label{defreg1}
For $n\in\cN$, we define a (module of) regulator $\cR_n$ by
$$\cR_n=\Im\left(\varphi_{\ell_1}^n\wedge\cdots\wedge\varphi_{\ell_{\nu(n)}}^n: \bigwedge_{\bbZ_p}^{\nu(n)+1}H^1_{\cF^n}(\bbQ,A)\longrightarrow H^1_{\cF^n}(\bbQ,A)\otimes G(n)\right),$$
where $n=\ell_1\cdots\ell_{\nu(n)} $ and $\nu(n)$ is the number of prime divisors of $n$ (for the definition of the map  $\varphi_{\ell_1}^n\wedge\cdots\wedge\varphi_{\ell_{\nu(n)}}^n$, see \S \ref{secalg}). Note that $\cR_n$ does not depend on the 
choice of the order of $\ell_1,\ldots,\ell_{\nu(n)}$.
\end{definition}

We recall the definition of Euler systems (\cite[Definition 2.1.1]{R}, \cite[Definition 3.2.2]{MR1}). Note that the definition of Euler systems in \cite{R} and that in \cite{MR1} are slightly different (see \cite[Remark 3.2.3]{MR1}). Our definition is due to the latter.

\begin{definition}
A collection 
$$\{ c_F\in H^1(F,T) \ | \ \bbQ \subset F \subset \cK, \ F/\bbQ \mbox{ : finite extension} \}$$
is an Euler system for $(T, \Sigma, \cK)$, where $\cK$ is an abelian extension of $\bbQ$, if, whenever $F \subset F' \subset \cK$ and $F'/\bbQ$ is finite,
$$\Cor_{F'/F}(c_{F'})=\left( \prod P_\ell(\Fr_\ell^{-1}) \right)c_F,$$
where the product runs over primes $\ell \in \Sigma$ which ramify in $F'$ but not in $F$.
\end{definition}

We define an analogue of Darmon's ``theta-element" (\cite[\S 4]{D}) for a general Euler system.

\begin{definition} \label{deftheta}  
Suppose $c=\{  c_F\in H^1(F,T) \ | \ \bbQ \subset F \subset \cK, \ F/\bbQ \mbox{ : finite extension} \}$ is an Euler system for $(T, \Sigma, \cK)$ such that $\bbQ(n) \subset \cK$ for any $n\in\cN$. We define the theta element $\theta_n(c)$ for $n\in\cN$ by 
$$\theta_n(c)=\sum_{\gamma \in G_n}\gamma c_n \otimes \gamma \in H^1(\bbQ(n), A)\otimes \bbZ[G_n],$$
where $c_n=c_{\bbQ(n)}$ (which we regard as an element of $H^1(\bbQ(n),A)$ via the natural map $H^1(\bbQ(n),T)\rightarrow H^1(\bbQ(n),A)$, induced by the natural surjection $T\rightarrow A$).
\end{definition}

\begin{lemma} \label{lemth}   
Suppose $d,n\in\cN$ and $d|n$. Then we have 
$$\pi_d(\theta_n(c))=\theta_d(c)\prod_{\ell | n/d}P_\ell(\Fr_\ell),$$
where $\pi_d$ is the map induced by the natural projection $G_n\rightarrow G_d$.
\end{lemma}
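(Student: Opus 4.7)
The plan is to unfold both sides by hand and compare, the substantive ingredients being the Euler system distribution relation together with the Mackey-type identity $\mathrm{Res}\circ\mathrm{Cor}=\sum_{\tau\in H}\tau$ for the normal subgroup $H=\Gal(\bbQ(n)/\bbQ(d))$ of $G_{\bbQ(d)}$. Throughout, I read the asserted equality as taking place in $H^1(\bbQ(n),A)\otimes\bbZ[G_d]$ after identifying $H^1(\bbQ(d),A)$ with its image under restriction.

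First, I would partition the sum defining $\theta_n(c)$ along fibers of $\pi_d$. Choosing a lift $\tilde\sigma\in G_n$ of each $\sigma\in G_d$, so that $\pi_d^{-1}(\sigma)=\tilde\sigma H$, I rewrite
\[
\pi_d(\theta_n(c))=\sum_{\sigma\in G_d}\tilde\sigma\Bigl(\sum_{\tau\in H}\tau c_n\Bigr)\otimes\sigma.
\]
The inner sum is $H$-invariant, so the expression is independent of the choice of $\tilde\sigma$.

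Next, I would invoke the Mackey formula for the normal subgroup $G_{\bbQ(n)}\trianglelefteq G_{\bbQ(d)}$ to recognize $\sum_{\tau\in H}\tau c_n$ as $\mathrm{Res}_{\bbQ(n)/\bbQ(d)}(\mathrm{Cor}_{\bbQ(n)/\bbQ(d)}(c_n))$, and then apply the Euler system distribution relation $\mathrm{Cor}_{\bbQ(n)/\bbQ(d)}(c_n)=\prod_{\ell\mid n/d}P_\ell(\Fr_\ell^{-1})c_d$. Using that restriction intertwines the $G_{\bbQ(d)}$-action on $H^1(\bbQ(d),A)$ with the action of any lift in $G_n$ on $H^1(\bbQ(n),A)$, this yields
\[
\pi_d(\theta_n(c))=\sum_{\sigma\in G_d}\mathrm{Res}\Bigl(\sigma\prod_{\ell\mid n/d}P_\ell(\Fr_\ell^{-1})c_d\Bigr)\otimes\sigma.
\]

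Finally, I would expand the right-hand side of the claimed identity. Writing $\prod_{\ell\mid n/d}P_\ell(\Fr_\ell)=\sum_{\vec\imath}a_{\vec\imath}\prod_\ell\Fr_\ell^{i_\ell}$ in $\bbZ_p[G_d]$, I compute
\[
\theta_d(c)\cdot\prod_{\ell\mid n/d}P_\ell(\Fr_\ell)=\sum_{\sigma\in G_d}\sum_{\vec\imath}a_{\vec\imath}\,\sigma c_d\otimes\sigma\prod_\ell\Fr_\ell^{i_\ell},
\]
and substitute $\sigma'=\sigma\prod_\ell\Fr_\ell^{i_\ell}$ in each summand. The change of variables converts $\prod P_\ell(\Fr_\ell)$ on the group-ring factor into $\prod P_\ell(\Fr_\ell^{-1})$ acting on $c_d$, so the expression becomes $\sum_{\sigma\in G_d}\sigma\prod_{\ell\mid n/d}P_\ell(\Fr_\ell^{-1})c_d\otimes\sigma$. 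Applying $\mathrm{Res}\otimes\mathrm{Id}$ and comparing with the previous display gives the desired equality.

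The calculation is essentially bookkeeping; the only conceptual step is the $\Fr_\ell\leftrightarrow\Fr_\ell^{-1}$ flip produced by the reindexing in the last step, which is the reason the Euler system axiom is formulated with $P_\ell(\Fr_\ell^{-1})$ while the lemma is stated with $P_\ell(\Fr_\ell)$. The one point that warrants care is confirming that the lift $\tilde\sigma$ does not affect the answer, which follows from the $H$-invariance of the inner sum $\sum_{\tau\in H}\tau c_n$ noted above.
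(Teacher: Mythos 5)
Your proof is correct and follows essentially the same route as the paper's: decompose the sum over the fibers of $G_n\to G_d$, identify the inner sum with $\Res\circ\Cor$, apply the Euler system distribution relation, and then transfer $\prod P_\ell(\Fr_\ell^{-1})$ from the cohomology factor to $\prod P_\ell(\Fr_\ell)$ on the group-ring factor by reindexing. The only cosmetic difference is that the paper reduces to the case $d=n/\ell$ of a single prime and uses the canonical splitting $G_n\simeq G_{n/\ell}\times G_\ell$, whereas you treat general $d\mid n$ at once via choices of lifts — both are fine.
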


\begin{proof}
We may assume $d=n/\ell$, where $\ell$ is a prime divisor of $n$. We compute 
\begin{eqnarray}
\pi_{n/\ell}(\theta_n(c)) &=& \pi_{n/\ell}\left( \sum_{\gamma \in G_n}\gamma c_n \otimes \gamma \right) \nonumber \\
&=&\sum_{\alpha \in G_{n/\ell}}\sum_{\beta \in G_\ell} \alpha \beta c_n \otimes \alpha \nonumber \\
&=&\sum_{\alpha \in G_{n/\ell}}\alpha\cdot \Norm_{\bbQ(n)/\bbQ(n/\ell)}(c_n)\otimes \alpha \nonumber \\
&=& \sum_{\alpha \in G_{n/\ell}}\alpha\cdot P_\ell(\Fr_\ell^{-1})c_{n/\ell} \otimes \alpha \nonumber \\
&=& \sum_{\alpha \in G_{n/\ell}}\alpha c_{n/\ell}\otimes \alpha\cdot P_\ell(\Fr_\ell) \nonumber \\
&=& \theta_{n/\ell}(c)P_\ell(\Fr_\ell), \nonumber
\end{eqnarray}
where $\Norm_{\bbQ(n)/\bbQ(n/\ell)}$ is the norm from $\bbQ(n)$ to $\bbQ(n/\ell)$ (note that $\Norm_{\bbQ(n)/\bbQ(n/\ell)}$ is equal to $\Res_{\bbQ(n)/\bbQ(n/\ell)}\circ \Cor_{\bbQ(n)/\bbQ(n/\ell)}$). This proves the lemma.
\end{proof}

The following proposition is an analogue of \cite[Theorem 4.5 (2)]{D}. 

\begin{proposition} \label{proptheta1}   
Let the notations be as in Definition \ref{deftheta}. We have 
$$\theta_n(c) \in H^1(\bbQ(n),A) \otimes I_n^{\nu(n)},$$
and if we regard $\theta_n(c)\in H^1(\bbQ(n),A) \otimes I_n^{\nu(n)}/I_n^{\nu(n)+1}$, then there is a canonical inverse image of $\theta_n(c)$ 
under the restriction map 
$$H^1(\bbQ,A)\otimes I_n^{\nu(n)}/I_n^{\nu(n)+1} \longrightarrow H^1(\bbQ(n),A)\otimes I_n^{\nu(n)}/I_n^{\nu(n)+1},$$
namely, there is a canonical element $x_n\in H^1(\bbQ,A)\otimes I_n^{\nu(n)}/I_n^{\nu(n)+1}$ such that $\Res_{\bbQ(n)/\bbQ}(x_n)=\theta_n(c)$.
\end{proposition}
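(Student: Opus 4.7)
My approach is to expand $\theta_n(c)$ in a natural basis of $\bbZ[G_n]$ coming from the decomposition $G_n \simeq \prod_{j=1}^{\nu} G_{\ell_j}$ and then identify which coefficients survive modulo $I_n^{\nu+1}$. Using the binomial expansion $\sigma_{\ell_j}^{i_j} = \sum_{k_j \geq 0}\binom{i_j}{k_j}(\sigma_{\ell_j}-1)^{k_j}$ in each tensor factor, I would regroup summands to obtain
\[
\theta_n(c) = \sum_{(k_1,\ldots,k_\nu)} \Big(\prod_{j=1}^{\nu} D_j^{(k_j)}\, c_n\Big) \otimes \prod_{j=1}^{\nu} (\sigma_{\ell_j}-1)^{k_j},
\]
where $D_j^{(k_j)} := \sum_i \binom{i}{k_j}\sigma_{\ell_j}^i \in \bbZ[G_{\ell_j}]$; in particular $D_j^{(0)} = \Norm_{G_{\ell_j}}$ and $D_j^{(1)}$ is Kolyvagin's derivative operator. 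Since $\prod_j(\sigma_{\ell_j}-1)^{k_j}\in I_n^{\sum k_j}$, the containment $\theta_n(c)\in H^1(\bbQ(n),A)\otimes I_n^{\nu}$ reduces to the cohomological vanishing $\prod_j D_j^{(k_j)}c_n = 0$ in $H^1(\bbQ(n),A)$ whenever $\sum k_j<\nu$.

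I would establish these vanishings by induction on $\nu(n)$. Fixing $(k_j)$ with $\sum k_j<\nu$ and choosing $j_0$ with $k_{j_0}=0$, the identity $D_{j_0}^{(0)} = \Norm_{G_{\ell_{j_0}}}$ combined with the Euler system relation and the swap manipulation underlying Lemma~\ref{lemth} gives
\[
\prod_j D_j^{(k_j)}c_n = \Res_{\bbQ(n)/\bbQ(n/\ell_{j_0})}\Big(P_{\ell_{j_0}}(\Fr_{\ell_{j_0}}^{-1})\,\prod_{j\ne j_0}D_j^{(k_j)}\,c_{n/\ell_{j_0}}\Big).
\]
When the exponent sum drops strictly below $\nu(n/\ell_{j_0}) = \nu-1$, the induction hypothesis annihilates the inner product. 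In the remaining boundary case $\sum k_j = \nu-1$, I would iterate the pivot over \emph{all} indices $j$ with $k_j=0$ simultaneously; the inner product then collapses to $\prod_{j\in S} D_j^{(k_j)}c_{n_S}$ with all $k_j\geq 1$ for $j\in S$, and a direct count forces every such $k_j=1$, producing Kolyvagin's derivative class $\kappa'_{n_S}$. By Kolyvagin's classical lemma, $\kappa'_{n_S}$ is $G_{n_S}$-invariant modulo $M$; since each residual $\Fr_{\ell_{j'}}$ (for $j'\notin S$) lies in $G_{n_S}$, the factor $P_{\ell_{j'}}(\Fr_{\ell_{j'}}^{-1})$ acts on $\kappa'_{n_S}$ as the scalar $P_{\ell_{j'}}(1)\equiv 0\pmod M$, yielding the desired vanishing.

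For the canonical lift, I would observe that $\theta_n(c)$ is manifestly $G_n$-invariant under the diagonal action on $H^1(\bbQ(n),A)\otimes\bbZ[G_n]$ (Galois action on the first factor, left multiplication by $G_n$ on the second): indeed $\gamma\cdot\theta_n(c) = \sum_\sigma \gamma\sigma\, c_n\otimes\gamma\sigma = \theta_n(c)$ after reindexing $\sigma$. Since $G_n$ acts trivially on the quotient $I_n^\nu/I_n^{\nu+1}$, the diagonal invariants on $H^1(\bbQ(n),A)\otimes I_n^\nu/I_n^{\nu+1}$ coincide with $H^1(\bbQ(n),A)^{G_n}\otimes I_n^\nu/I_n^{\nu+1}$, so $\theta_n(c)\bmod I_n^{\nu+1}$ lands in this submodule. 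Under the standing Kolyvagin-system hypotheses, the inflation-restriction sequence collapses to give $H^1(\bbQ,A)\xrightarrow{\sim} H^1(\bbQ(n),A)^{G_n}$, producing the unique (hence canonical) preimage $x_n \in H^1(\bbQ,A)\otimes I_n^\nu/I_n^{\nu+1}$.

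The main obstacle will be the combinatorial bookkeeping of the inductive step: pivoting one zero-exponent at a time stalls precisely when $\sum_{j\ne j_0}k_j$ matches $\nu(n/\ell_{j_0})$. The trick is to pivot \emph{all} zero-exponents at once, so that the number of residual $P_{\ell_{j'}}(\Fr_{\ell_{j'}}^{-1})$ factors equals the deficit $\nu - \sum_{j\in S}k_j$; this deficit forces each remaining $k_j$ in $S$ to equal $1$, placing the coefficient into the exact regime where Kolyvagin's invariance lemma applies and the Frobenius factors collapse to $P_{\ell_{j'}}(1)\equiv 0\pmod M$.
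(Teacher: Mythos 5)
Your argument has two genuine gaps. The first is in the vanishing claim that $\prod_j D_j^{(k_j)}c_n=0$ whenever $\sum_j k_j<\nu$. In the boundary case $\sum_jk_j=\nu-1$, if more than one $k_j$ equals zero then the surviving set $S$ satisfies $\sum_{j\in S}k_j=\nu-1>|S|$, so some $k_j\geq 2$; your ``direct count forces every such $k_j=1$'' is false (take $\nu=3$ and $(k_1,k_2,k_3)=(2,0,0)$, where the collapsed class is $D^{(2)}_{\ell_1}c_{\ell_1}$). That class is not Kolyvagin's derivative class and is not $G_{\ell_1}$-invariant mod $M$: from the identity $(\sigma-1)D^{(k)}=\binom{N}{k}-\sigma D^{(k-1)}$ (with $\sigma=\sigma_{\ell_1}$, $N=|G_{\ell_1}|$) one gets $(\sigma-1)D^{(2)}c_{\ell_1}\equiv -\sigma D_{\ell_1}c_{\ell_1}\not\equiv 0$ in general, so the residual factors $P_{\ell_{j'}}(\Fr_{\ell_{j'}}^{-1})$ do not act as the scalar $P_{\ell_{j'}}(1)$. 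Worse, the vanishing claim itself appears to fail once some $k_j\geq p$: iterating the identity gives $(\sigma-1)^kD^{(k)}c_{\ell_1}\equiv\sum_{i=1}^{k}(-1)^{k-i}\binom{N}{i}\sigma^{k-i}(\sigma-1)^{i-1}c_{\ell_1}$ modulo the norm term, and $\binom{N}{k}$ need not be divisible by $M$ when $p\mid k$ (e.g.\ $\ord_p\binom{p^m}{p}=m-1$). Since your reduction of the containment to coefficient vanishing is only a sufficient condition (elements of $I_n^{\nu}\otimes\bbZ/M\bbZ$ can have nonzero coefficients on monomials of degree $<\nu$, via the relation $\sum_{i\geq 1}\binom{N}{i}(\sigma-1)^i=0$ in $\bbZ[G_\ell]$), this part of the strategy must be restructured rather than patched.

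The second gap is in the lift: for the torsion (non-flat) $\bbZ$-module $Y=I_n^{\nu}/I_n^{\nu+1}$ one does not have $(X\otimes Y)^{G_n}=X^{G_n}\otimes Y$, and injectivity of $H^1(\bbQ,A)\to H^1(\bbQ(n),A)$ need not survive tensoring with $Y$; so neither the existence nor the uniqueness of the preimage follows from inflation--restriction as you invoke it. The paper's proof avoids both problems by a different route: it compares $\theta_n(c)$ with $\sum_{\gamma\in G_n}\gamma c_n\otimes\prod_{\ell\mid n}(\gamma_\ell-1)$, which visibly lies in $H^1(\bbQ(n),A)\otimes I_n^{\nu(n)}$; by Lemma \ref{lemth} the difference equals $\sum_{d\mid n,\,d\neq n}(-1)^{\nu(n/d)}\theta_d(c)\prod_{\ell\mid n/d}P_\ell(\Fr_\ell)$, which is handled by induction on $\nu(n)$, and modulo $I_n^{\nu(n)+1}$ only the single operator $\prod_{\ell\mid n}D_\ell$ (all $k_j=1$) survives, whose canonical lift is the classical Kolyvagin derivative class $\kappa_n'$. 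Reorganizing your expansion along these telescoping lines is what lets one avoid ever controlling the higher operators $D^{(k)}$, $k\geq 2$, individually.
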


\begin{proof}
We prove this proposition by induction on $\nu(n)$. When $\nu(n)=0$ (i.e. $n=1$), we have $I_1^0=\bbZ$ and $\theta_1(c)=c_1\in H^1(\bbQ,A)$, so there is nothing to prove (since $x_1=c_1$). Suppose $\nu(n)>0$. We write every $\gamma \in G_n$ uniquely as 
$$\gamma=\prod_{\ell|n}\gamma_\ell,$$
where $\gamma_\ell \in G_\ell$. We compute
\begin{eqnarray}
\sum_{\gamma \in G_n}\gamma c_n\otimes\prod_{\ell|n}(\gamma_\ell-1) &=&\theta_n(c)+\sum_{d|n,d\neq n}(-1)^{\nu(n/d)}\left(\sum_{\gamma\in G_n}\gamma c_n\otimes\prod_{\ell|d}\gamma_\ell\right) \nonumber \\
&=&\theta_n(c)+\sum_{d|n,d\neq n}(-1)^{\nu(n/d)}\theta_d(c)\prod_{\ell|n/d}P_\ell(\Fr_\ell), \nonumber 
\end{eqnarray}
where the first equality follows by direct computation, and the second by Lemma \ref{lemth}. This shows 
$\theta_n(c) \in H^1(\bbQ(n), A)\otimes I_n^{\nu(n)}$, since by the inductive hypothesis we have $$\theta_d(c)\prod_{\ell|n/d}P_\ell(\Fr_\ell)\in H^1(\bbQ(n),A)\otimes I_n^{\nu(n)}$$ if $d|n$ and $d\neq n$.

We compute
$$\sum_{\gamma \in G_n}\gamma c_n\otimes\prod_{\ell|n}(\gamma_\ell-1)=\left( \prod_{\ell|n}D_\ell \right)c_n\otimes \prod_{\ell|n}(\sigma_\ell-1)  \mbox{ in } H^1(\bbQ(n),A)\otimes I_n^{\nu(n)}/I_n^{\nu(n)+1}, $$
where 
$$D_\ell=\sum_{i=1}^{|G_\ell|-1}i\sigma_\ell^i,$$
(recall that $\sigma_\ell$ is a fixed generator of $G_\ell$). It is well known that $\left( \prod_{\ell|n}D_\ell \right)c_n $ has a 
canonical inverse image in $H^1(\bbQ,A)$, which is usually called Kolyvagin's derivative class (see \cite[Definition 4.4.10]{R}). We denote it by $\kappa'_n$ (in \cite[\S 4.4]{R}, it is denoted by $\kappa_{[\bbQ,n,M]}$). 
Hence we have 
\begin{equation}
\theta_n(c)=\kappa'_n\otimes\prod_{\ell|n}(\sigma_\ell-1)-\sum_{d|n,d\neq n}(-1)^{\nu(n/d)}\theta_d(c)\prod_{\ell|n/d}P_\ell(\Fr_\ell). \label{eq10}
\end{equation}
By the inductive hypothesis, we see that $\theta_n(c)\in H^1(\bbQ(n),A) \otimes I_n^{\nu(n)}/I_n^{\nu(n)+1}$ has a canonical 
inverse image in $H^1(\bbQ,A) \otimes I_n^{\nu(n)}/I_n^{\nu(n)+1}$.
\end{proof}

\begin{remark}
By the proof of Proposition \ref{proptheta1}, the element $x_n \in H^1(\bbQ,A)\otimes I_n^{\nu(n)}/I_n^{\nu(n)+1}$ such that $\Res_{\bbQ(n)/\bbQ}(x_n)=\theta_n(c)$ 
is inductively constructed by 
$$x_n=\kappa'_n\otimes\prod_{\ell|n}(\sigma_\ell-1)-\sum_{d|n,d\neq n}(-1)^{\nu(n/d)}x_d\prod_{\ell|n/d}P_\ell(\Fr_\ell).$$
Since $\kappa_n'$ is a canonical element, we can say that $x_n$ is also canonical. So we can naturally regard $\theta_n(c)\in H^1(\bbQ,A)\otimes I_n^{\nu(n)}/I_n^{\nu(n)+1}$.
\end{remark}

We summarize here the standard hypotheses (H.0)-(H.6) of Kolyvagin systems for the triple $(A, \cF, \Sigma)$ (\cite[\S 3.5]{MR1}):
\begin{itemize}
\item[(H.0)]{$A$ is a free $\bbZ/M\bbZ$-module of finite rank.}
\item[(H.1)]{$A/pA$ is an absolutely irreducible $\bbF_p[G_\bbQ]$-representation.}
\item[(H.2)]{There is a $\tau\in G_\bbQ$ such that $\tau=1$ on $\mu_{p^\infty}$ and $A/(\tau-1)A \simeq \bbZ/M\bbZ$.}
\item[(H.3)]{$H^1(\bbQ(A)\bbQ(\mu_{p^\infty}),A/pA)=H^1(\bbQ(A)\bbQ(\mu_{p^\infty}),A^\ast[p])=0$, where $\bbQ(A)$ is the fixed field in $\overline \bbQ$ of the kernel of the map $G_\bbQ \rightarrow \Aut(A)$, and $A^\ast[p]=\{a \in A^\ast \ | \ pa=0\}$.}
\item[(H.4)]{Either
\begin{itemize}
\item[(H.4a)]{$\Hom_{\bbF_p[[G_\bbQ]]}(A/pA,A^\ast[p])=0$, or}
\item[(H.4b)]{$p>4$.}
\end{itemize}}
\item[(H.5)]{$\Sigma_t \subset \Sigma \subset \Sigma_1$ for some $t\in \bbZ_{>0}$, where for $k\in\bbZ_{>0}$ $\Sigma_k$ is the set of all the primes $\ell$ satisfying ($*$) for $M$ replaced by $p^k$.} 
\item[(H.6)]{For every $\ell \in \{ \ell \ | \ T \mbox{ is ramified at } \ell \} \cup \{ p, \infty\}$, the local condition $\cF$ at $\ell$ is cartesian (see \cite[Definition 1.1.4]{MR1}) on the category $\Quot_{\bbZ/M\bbZ}(A)$ (see \cite[Example 1.1.3]{MR1}).}
\end{itemize}

Note that, in our case, (H.0) is always satisfied.


Now, our main theorem is as follows:

\begin{theorem} \label{mainthm}
Suppose that there exists an Euler system $c$ for $(T,\Sigma,\cK)$. Assume the following:
\begin{itemize}
\item[(i)]{the standard hypotheses {\rm{(H.0)-(H.6)}} of Kolyvagin systems are satisfied for the triple $(A, \cF, \Sigma)$,}
\item[(ii)]{$\cK$ contains the maximal abelian $p$-extension of $\bbQ$ which is unramified outside of $p$ and $\Sigma$,}
\item[(iii)]{$T/(\Fr_\ell-1)T$ is a cyclic $\bbZ_p$-module for every $\ell \in\Sigma$,}
\item[(iv)]{$\Fr_\ell^{p^k}-1$ is injective on $T$ for every $\ell\in\Sigma$ and $k\geq0$,}
\item[(v)]{the core rank $\chi(A, \cF)=1$ (\cite[Definition 4.1.11]{MR1}),} 
\end{itemize}
((ii)-(iv) are the assumptions of the first statement of \cite[Theorem 3.2.4]{MR1}, and (iii) is satisfied since we assumed $A/(\Fr_\ell-1)A \simeq \bbZ/M\bbZ$). Then we have
$$\theta_n(c) \in h_n\cR_n ,$$
where $h_n=|H^1_{(\cF^\ast)_n}(\bbQ, A^{\ast})|$.
\end{theorem}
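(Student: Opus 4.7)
The plan is to treat the theorem as essentially a formal consequence of the structural machinery on algebraic Kolyvagin systems developed in Sections \ref{secKoly} and \ref{secreg}. The strategy has three steps: recognize $\{\theta_n(c)\}_n$ as a $\theta$-Kolyvagin system, lift this through the surjective regulator map, and unwind the definition of the regulator image.

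First, I would invoke Proposition \ref{proptheta} to see that the collection $\{\theta_n(c)\}_n$, viewed via the canonical lifts $x_n \in H^1(\bbQ,A) \otimes I_n^{\nu(n)}/I_n^{\nu(n)+1}$ produced by Proposition \ref{proptheta1}, satisfies the axioms of a $\theta$-Kolyvagin system for $(A, \cF, \Sigma)$. The key input is the Euler-factor relation from Lemma \ref{lemth}, which, after passing to the augmentation-ideal filtration and combining with the explicit inductive formula for $x_n$ appearing in the proof of Proposition \ref{proptheta1}, yields exactly the compatibility axioms required of a $\theta$-Kolyvagin system.

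Second, I would apply the machinery of algebraic Kolyvagin systems. By Theorem \ref{thmKS}, the module of $\theta$-Kolyvagin systems is isomorphic to the module of (ordinary) Kolyvagin systems (and to derived and pre-Kolyvagin systems). Under hypotheses (i)--(v), in particular the core rank condition $\chi(A, \cF) = 1$, Theorem \ref{thmsurj} asserts that the regulator map from the module of unit systems (Definition \ref{defu}) to the module of Kolyvagin systems is surjective. Combined with the compatibility Theorem \ref{thmreg} between the regulator maps on the four variants, this shows that the $\theta$-regulator map is surjective as well. Hence there exists a unit system $u$ whose image under the $\theta$-regulator is precisely $\{\theta_n(c)\}_n$.

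Third, I would unwind the definition of the $\theta$-regulator map (Definition \ref{defreg}). By construction, a unit system encodes compatible elements of $\bigwedge_{\bbZ_p}^{\nu(n)+1} H^1_{\cF^n}(\bbQ,A)$, and the $\theta$-regulator at level $n$ evaluates such an element against $\varphi_{\ell_1}^n \wedge \cdots \wedge \varphi_{\ell_{\nu(n)}}^n$, scaled by the factor $h_n = |H^1_{(\cF^\ast)_n}(\bbQ, A^\ast)|$ which enters from the Mazur--Rubin structure theorem identifying the Selmer ``stub'' in the core rank one setting. By Definition \ref{defreg1}, this wedge image lies in $\cR_n$, so $\theta_n(c) \in h_n \cR_n$, as required. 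The main obstacle I anticipate is the careful bookkeeping in this last step: one must confirm that the scaling factor produced by the $\theta$-regulator is literally $h_n$ rather than a related Selmer invariant, and that the output lies in the correct graded piece $H^1(\bbQ,A) \otimes I_n^{\nu(n)}/I_n^{\nu(n)+1}$ of $H^1(\bbQ,A) \otimes G(n)$ to match the ambient module of $\theta_n(c)$; both rely on the explicit description of the regulator map built in Section \ref{secreg}.
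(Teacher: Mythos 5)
Your high-level plan --- recognize $\{\theta_n(c)\}_n$ as a $\theta$-Kolyvagin system via Proposition \ref{proptheta}, lift it through the surjective regulator map using Theorems \ref{thmKS}, \ref{thmreg}, \ref{thmsurj}, and then conclude by examining the regulator output --- is exactly the paper's route, and the first two steps match the paper's proof precisely.

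The third step, however, is imprecise in a way that hides the real content. The $\theta$-regulator $R_T$ is not ``scaled by $h_n$''; by Definition \ref{defreg} it is the plain wedge evaluation $(\psi_{T,1}^{(n)}\wedge\cdots\wedge\psi_{T,\nu(m)}^{(n)})(\varepsilon_m)$ with no Selmer factor built in. The factor $h_n$ appears for two interlocking reasons that your sketch does not isolate. First, the unit system produced by Theorem \ref{thmsurj} can be taken to live over an index set $\cU$ consisting of \emph{core} $n$'s (Remark \ref{remcore}), so $h_m=1$ for every $m\in\cU$; without this choice you would only get $(h_n/h_m)\cR_n$. Second, Proposition \ref{propreg} and the lemma preceding it show, by induction on $\nu(m/n)$ using the global duality exact sequence, that each application of $-v_\ell$ in descending from level $m$ to level $n$ contributes exactly the ratio $h_{m/\ell}/h_m$, so the composite lands in $(h_n/h_m)\cR_n = h_n\cR_n$. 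This is substantive arithmetic input (Poitou--Tate duality comparing $H^1_{\cF^m}$ and $H^1_{\cF^{m/\ell}}$), not mere bookkeeping; your proposal flags it as the likely sticking point but does not supply the mechanism, and the attribution of the $h_n$ to a ``scaling of the regulator'' would not survive an attempt to write out the step.
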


From this, we obtain the following corollary, which is a special case of \cite[Theorem 2.2.2]{R} and \cite[Corollary 4.4.5]{MR1} (see also Remark \ref{remkappa}).

\begin{corollary} \label{corrubin}
Under the same assumptions in Theorem \ref{mainthm}, we have $c_\bbQ=\theta_1(c) \in H^1_\cF(\bbQ,A)$ and
$$\ord_p(h_1) \leq \ind(c),$$
where $\ord_p(h_1)$ is defined by $h_1=p^{\ord_p(h_1)}$, and 
$$\ind(c)=\sup \{ m \ | \ c_\bbQ \in p^m H^1_{\cF}(\bbQ,A) \}.$$
\end{corollary}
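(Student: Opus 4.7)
The plan is to obtain the corollary as the direct specialization of Theorem~\ref{mainthm} to the case $n=1$, so the bulk of the work is simply to identify what each ingredient becomes in this degenerate case. First I would note that when $n=1$ we have $\nu(n)=0$, the Galois group $G_1$ is trivial, and the augmentation ideal $I_1$ is zero, so $G(1)=I_1^0/I_1^1\otimes\bbZ/M\bbZ=\bbZ/M\bbZ$ (concentrated in degree $0$). Moreover, by Definition~\ref{deftheta}, the theta-element collapses to $\theta_1(c)=c_\bbQ$, and by the definition of the $n$-modified Selmer group, $H^1_{\cF^1}(\bbQ,A)=H^1_{\cF}(\bbQ,A)$.

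Next I would compute $\cR_1$ from Definition~\ref{defreg1}. Since $\nu(1)=0$, there are no factors $\varphi_{\ell_i}^n$ in the wedge, so the defining map is simply the natural identification
\[
\bigwedge_{\bbZ_p}^{1}H^1_{\cF}(\bbQ,A)=H^1_{\cF}(\bbQ,A)\xrightarrow{\ \otimes 1\ } H^1_{\cF}(\bbQ,A)\otimes G(1)=H^1_{\cF}(\bbQ,A).
\]
Thus $\cR_1=H^1_{\cF}(\bbQ,A)$, and Theorem~\ref{mainthm} gives
\[
c_\bbQ=\theta_1(c)\in h_1\,\cR_1=h_1\,H^1_{\cF}(\bbQ,A).
\]
In particular this establishes $c_\bbQ\in H^1_{\cF}(\bbQ,A)$, which is the first claim.

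For the bound on $\ind(c)$, I would simply write $h_1=p^{\ord_p(h_1)}$ and observe that the inclusion $c_\bbQ\in p^{\ord_p(h_1)}H^1_{\cF}(\bbQ,A)$ immediately yields $\ord_p(h_1)\leq\sup\{m\mid c_\bbQ\in p^m H^1_{\cF}(\bbQ,A)\}=\ind(c)$, which is the desired inequality. Since the whole argument is a matter of unwinding definitions once Theorem~\ref{mainthm} is in hand, there is no real obstacle here; the only point requiring care is the verification that the empty-wedge convention in Definition~\ref{defreg1} produces $\cR_1=H^1_{\cF}(\bbQ,A)$ rather than a proper submodule, which I would state explicitly to avoid ambiguity.
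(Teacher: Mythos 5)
Your proposal is correct and follows exactly the paper's own argument: specialize Theorem \ref{mainthm} to $n=1$, identify $\cR_1=H^1_{\cF}(\bbQ,A)$ and $\theta_1(c)=c_\bbQ$, and read off the inequality. The extra care you take in unwinding the empty-wedge convention in Definition \ref{defreg1} is a fair elaboration of a step the paper leaves implicit, but it is the same proof.
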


\begin{proof}
Take $n=1$ in Theorem \ref{mainthm}, then we have
$$c_\bbQ=\theta_1(c)\in h_1\cR_1 =h_1 H^1_{\cF}(\bbQ,A).$$
Hence we have the desired inequality $\ord_p(h_1) \leq \ind(c).$
\end{proof}

\section{Algebraic Kolyvagin systems} \label{secKoly}
In this section, we introduce a notion of ``algebraic Kolyvagin systems". The aim of this section is to prove Theorem \ref{thmKS}. Our Kolyvagin systems are defined for a 7-tuple 
$(\cO, \Sigma, H, t, v, u, P)$ satisfying the following:
\begin{itemize}
\item{$\cO$: a commutative ring (with unity),}
\item{$\Sigma$: a countable set,}
\item{$H$: an $\cO$-module,}
\item{$t=\{t_\frq\}_\frq \in \prod_{\frq \in \Sigma}\bbZ_{\geq1}$,}
\item{$v=\{v_\frq\}_\frq \in \prod_{\frq \in \Sigma}\Hom_\cO(H,\cO)$,}
\item{$u=\{u_\frq\}_\frq \in \prod_{\frq \in \Sigma}\Hom_\cO(H,\cO/(t_\frq))$ ($(t_\frq)$ denotes the ideal $t_\frq \cO$),}
\item{$P=\{P_\frq\}_\frq \in \prod_{\frq \in \Sigma}G(\Sigma\setminus \frq)_1$ (we often denote $\Sigma\setminus \{\frq\}$ by $\Sigma\setminus \frq$),}
\end{itemize}
where for any subset $\Sigma' \subset \Sigma$, 
$$G(\Sigma')_i= \mathop{\ilim}_{\frn \in \cN(\Sigma')} \left( I_\frn^i/I_\frn^{i+1} \otimes_\bbZ \cO \right),$$
and where $\cN(\Sigma')=\{ \frn \subset \Sigma'  \ | \ \nu(\frn):= |\frn| < \infty\}$, and $I_\frn$ is the augmentation ideal of $\bbZ[\bigoplus_{\frq \in \frn}\bbZ/t_\frq \bbZ]$. 

Note that $G(\Sigma')_1$ is canonically isomorphic to $\prod_{\frq \in \Sigma'}\cO/(t_\frq)$, 
since 
$$I_\frn / I_\frn^2 \otimes_\bbZ \cO \simeq \bigoplus_{\frq \in \frn}\bbZ/t_\frq \bbZ \otimes_\bbZ \cO \simeq \bigoplus_{\frq \in \frn}\cO/(t_\frq)$$ 
for any $\frn \in \cN(\Sigma')$, where the first isomorphism is induced by the inverse of 
$$\bigoplus_{\frq\in\frn}\bbZ/t_\frq\bbZ \stackrel{\sim}{\longrightarrow} I_\frn/I_\frn^2 \quad ; \quad \sigma \mapsto \sigma-1.$$
So if $\Sigma'' \subset \Sigma'$, then $G(\Sigma'')_1$ is regarded as an $\cO$-submodule 
of $G(\Sigma')_1$, and also a quotient of it. We put
$$G(\Sigma')= \mathop{\ilim}_{\frn \in \cN(\Sigma')} \left(\bigoplus_{i=0}^\infty I_\frn^i/I_\frn^{i+1} \otimes_\bbZ \cO \right).$$
Note that if $\Sigma'' \subset \Sigma'$, then there is a natural map from $G(\Sigma'')$ to $G(\Sigma')$ induced by the inclusion $I_\frn \hookrightarrow I_\frm$, where $\frn \in \cN(\Sigma'')$, $\frm\in\cN(\Sigma')$ such that $\frn \subset \frm$. So any element of $G(\Sigma'')$ is 
naturally regarded as an element of $G(\Sigma')$. 

From now on we fix a 7-tuple $(\cO, \Sigma, H, t, v, u, P)$ satisfying above, and give some more notations for it. We denote simply 
$\cN=\cN(\Sigma)$. If 
$\Sigma' \subset \Sigma$, there is a natural projection map from $G(\Sigma)$ to $G(\Sigma')$, which we denote by 
$(  \cdot  )|_{\Sigma'}$. In particular, for $\frn \in \cN$, which is by definition a subset of $\Sigma$, we denote the 
projection map to $G(\frn)$ by $\pi_\frn$ (namely, $\pi_\frn:=(\cdot)|_\frn : G(\Sigma) \rightarrow G(\frn)$).

If $\frm, \frn \in \cN$, and $\frm \subset \frn$, we denote $\frn/\frm$ instead of the set theoretic notation $\frn \setminus \frm$. If $\frn \in \cN$ and $\frq \in \Sigma$ such that $\frq \notin \frn$, we denote $\frn\frq$ instead of $\frn \cup \frq$.
We also denote $1$ instead of $\emptyset \in \cN$.

For each $\frq \in \Sigma$, fix a generator $x_\frq$ of $G(\frq)_1 (\simeq \cO/(t_\frq))$ (as an $\cO$-module).

\begin{definition} \label{defrec}  
For any $\frq \in \Sigma$, we define an $\cO$-homomorphism
$$\varphi_\frq : H \longrightarrow G(\Sigma)_1$$
by $\varphi_\frq(a)=-u_\frq(a) x_\frq-v_\frq(a) P_\frq$. For $\frn \in \cN$, we denote the composition map $\pi_\frn \circ \varphi_\frq$ by $\varphi_\frq^\frn$.
\end{definition}

Note that if $\frn \in \cN$ and $\frn=\frd \sqcup \frm$, we have $\varphi_\frq^\frn=\varphi_\frq^\frd + \varphi_\frq^\frm$ for any $\frq \in \Sigma$, since $G(\frn)_1 (\simeq \bigoplus_{\frq' \in \frn}\cO/(t_{\frq'})) \simeq G(\frd)_1 \oplus G(\frm)_1$.

\begin{example} \label{ex1}
The setting in \S \ref{statement} fits into this general setting. Use the notations as in \S \ref{statement}, take $(\cO, \Sigma, H, t, v, u, P)$ as follows:
\begin{itemize}
\item{$\cO=\bbZ/M\bbZ$,}
\item{$\Sigma$: as in \S \ref{statement},}
\item{$H=\bigcup_{n \in \cN(\Sigma)}H^1_{\cF^n}(\bbQ,A)$,} 
\item{$t_\ell$: the maximal $p$-power dividing $\ell-1$,}
\item{$v_\ell$: as in \S \ref{statement},}
\item{$u_\ell$: as in \S \ref{statement},}
\item{$P=(P_\ell(\Fr_\ell) \otimes 1)_\ell \in \prod_{\ell \in \Sigma}\mathop{\ilim}_{n\in\cN(\Sigma), \ell|n}(I_{n/\ell}/I_{n/\ell}^2 \otimes \bbZ/M\bbZ)=\prod_{\ell \in \Sigma} G(\Sigma\setminus\ell)_1$.}
\end{itemize}
If we set $x_\ell=(\sigma_\ell-1)\otimes 1$, then $\varphi_\ell$ in the above definition is the same one in \S \ref{statement}.

\end{example}

Now, for $r \in \bbZ_{\geq1}$, we define algebraic Kolyvagin systems of ``rank $r$". Recall that for $\frn \in \cN$, $\nu(\frn)=|\frn|$, 
and $G(\frn)_{\nu(\frn)}=I_\frn^{\nu(\frn)}/I_\frn^{\nu(\frn)+1}\otimes_\bbZ \cO$. In what follows, for any $\cO$-module $G$, we denote $\left(\bigwedge_\cO^rH \right)\otimes_\cO G$ by $\bigwedge^rH\otimes_\cO G$ for simplicity. 

By the construction in \S \ref{secalg}, for every $\frq \in \Sigma$ and $\frn \in \cN$, $v_\frq \in \Hom_\cO(H,\cO)$ induces the map 
$$v_\frq : \bigwedge^rH \otimes_\cO G(\frn)_{\nu(\frn)} \longrightarrow \bigwedge^{r-1}H \otimes_\cO G(\frn)_{\nu(\frn)}.$$
Similarly, $u_\frq \in \Hom_\cO(H,\cO/(t_\frq))$ induces the map 
$$u_\frq : \bigwedge^rH \otimes_\cO G(\frn)_{\nu(\frn)} \longrightarrow \bigwedge^{r-1}H \otimes_\cO G(\frn)_{\nu(\frn)}\otimes_\cO \cO/(t_\frq),$$
and $\varphi_\frq \in \Hom_\cO(H,G(\Sigma)_1)$ induces the map 
$$\varphi_\frq :  \bigwedge^rH \otimes_\cO G(\frn)_{\nu(\frn)} \longrightarrow \bigwedge^{r-1}H \otimes_\cO G(\Sigma)_{\nu(\frn) +1}.$$

\begin{definition} \label{defkoly}
A collection 
$$\{ \kappa_\frn \in \bigwedge^rH \otimes_\cO G(\frn)_{\nu(\frn)} \ | \ \frn \in \cN \}$$
is a Kolyvagin system of rank $r$ if the following axioms (K1)-(K4) are satisfied:\\
(K1) if $\frq \in \Sigma \setminus \frn$, then $v_\frq(\kappa_\frn)=0$,\\
(K2) if $\frq \in \frn$, then $u_\frq(\kappa_\frn)=0$,\\
(K3) if $\frq \in \frn$, then $v_\frq(\kappa_\frn)=\varphi_\frq(\kappa_{\frn/\frq})$,\\
(K4) if $\frq \in \frn$, then $\pi_{\frn/\frq}(\kappa_\frn)=0$.

We denote the $\cO$-module consisting of all Kolyvagin systems of rank $r$ by KS$_r$. This is an $\cO$-submodule of $\prod_{\frn \in \cN}\bigwedge^rH\otimes_\cO G(\frn)_{\nu(\frn)}$.
\end{definition}

We will see that our Kolyvagin systems generalize the notion of original Kolyvagin systems in \cite{MR1} (see Proposition \ref{propKS}).

We will define other three algebraic Kolyvagin systems, in Definitions \ref{defTKS}, \ref{defPKS}, and \ref{defDKS}, which we call 
$\theta$-Kolyvagin systems, pre-Kolyvagin systems, and derived-Kolyvagin systems respectively. The $\cO$-module consisting of 
all $\theta$-Kolyvagin systems (resp. pre-Kolyvagin systems, resp. derived-Kolyvagin systems) of rank $r$ is denoted by ${\rm{TKS}}_r$ (resp. 
${\rm{PKS}}_r$, resp. ${\rm{DKS}}_r$).

The following definition is due to \cite[Definition 6.1]{MR2}.

\begin{definition} \label{defd}
Let $\frn \in \cN$ and $\frd \subset \frn$. When $\frd \neq 1$, define
$$\cD_{\frn, \frd}=\left|
\begin{array}{ccccc}
	-\pi_{\frn/\frd}(P_{\frq_1}) &-\pi_{\frq_2}(P_{\frq_1}) &\cdots& &-\pi_{\frq_\nu}(P_{\frq_1}) \\ 
	-\pi_{\frq_1}(P_{\frq_2}) &-\pi_{\frn/\frd}(P_{\frq_2}) &-\pi_{\frq_3}(P_{\frq_2}) & \cdots & -\pi_{\frq_\nu}(P_{\frq_2}) \\
	\vdots & -\pi_{\frq_2}(P_{\frq_3}) & \ddots & &\vdots\\
	\vdots & \vdots & &\ddots &\vdots\\
	-\pi_{\frq_1}(P_{\frq_\nu}) &-\pi_{\frq_2}(P_{\frq_\nu}) &\cdots & &-\pi_{\frn/\frd}(P_{\frq_\nu})
\end{array}
\right| \in G(\frn)_{\nu(\frd)},$$
where $\{ \frq_1, \ldots, \frq_\nu \}=\frd$ ($\nu=\nu(\frd)$). When $\frd=1$, define 
$$\cD_{\frn, 1}=1 \in \cO=G(\frn)_0.$$
Note that $\cD_{\frn,\frd}$ does not depend on the choice of the order $\frq_1,\ldots, \frq_\nu$ of the elements of $\frd$.

We put 
$$\cD_\frd=\pi_\frd(\cD_{\frn, \frd}) =\left|
\begin{array}{ccccc}
	0 &-\pi_{\frq_2}(P_{\frq_1}) &\cdots & &-\pi_{\frq_\nu}(P_{\frq_1}) \\ 
	-\pi_{\frq_1}(P_{\frq_2}) & 0 & -\pi_{\frq_3}(P_{\frq_2}) & \cdots & -\pi_{\frq_\nu}(P_{\frq_2}) \\
	\vdots & -\pi_{\frq_2}(P_{\frq_3}) & \ddots & &\vdots\\
	\vdots & \vdots & &\ddots &\vdots\\
	-\pi_{\frq_1}(P_{\frq_\nu}) &-\pi_{\frq_2}(P_{\frq_\nu}) &\cdots & & 0
\end{array}
\right| \in G(\frd)_{\nu(\frd)}.$$
Clearly, $\cD_\frd$ does not depend on $\frn$. 

\end{definition}

\begin{definition} \label{defTKS}
A collection 
$$\{ \theta_\frn \in \bigwedge^rH \otimes_\cO G(\frn)_{\nu(\frn)} \ | \ \frn \in \cN \}$$
is a $\theta$-Kolyvagin system of rank $r$ if the following axioms (TK1)-(TK4) are satisfied:\\
(TK1) if $\frq \in \Sigma \setminus \frn$, then $v_\frq(\theta_\frn)=0$,\\
(TK2) if $\frq \in \frn$, then $u_\frq(\sum_{\frd \subset \frn}\theta_\frd \cD_{\frn,\frn/\frd})=0$,\\
(TK3) if $\frq \in \frn$, then $v_\frq(\sum_{\frd \subset \frn}(-1)^{\nu(\frn/\frd)}\pi_\frd(\theta_\frn))=\varphi_\frq(\sum_{\frd \subset \frn/\frq}(-1)^{\nu(\frn/\frd \frq)}\pi_\frd(\theta_{\frn/\frq}))$,\\
(TK4) if $\frq \in \frn$, then $\pi_{\frn/\frq}(\theta_\frn)=\theta_{\frn/\frq}\cdot \pi_{\frn/\frq}(P_\frq)$.
\end{definition}

\begin{definition} \label{defPKS}
A collection 
$$\{ \pkappa_\frn \in \bigwedge^rH \otimes_\cO G(\Sigma)_{\nu(\frn)} \ | \ \frn \in \cN \}$$
is a pre-Kolyvagin system of rank $r$ if the following axioms (PK1)-(PK5) are satisfied:\\
(PK1) if $\frq \in \Sigma \setminus \frn$, then $v_\frq(\pkappa_\frn)=0$,\\
(PK2) if $\frq \in \frn$, then $u_\frq(\sum_{\frd \subset \frn}(-1)^{\nu(\frn/\frd)}\pi_\frn(\pkappa_\frd)\prod_{\frq' \in \frn/\frd}\pi_{\frn/\frq'}(P_{\frq'}))=0$,\\
(PK3) if $\frq \in \frn$, then $v_\frq(\pkappa_\frn)=\varphi_\frq(\pkappa_{\frn/\frq})$,\\
(PK4) if $\frq \in \frn$, then $\pkappa_\frn|_{\Sigma\setminus\frq}=\pkappa_{\frn/\frq}|_{\Sigma\setminus\frq}\cdot P_\frq$,\\
(PK5) $\pkappa_\frn=\sum_{\frd \subset \frn}\pi_\frn(\pkappa_\frd)\prod_{\frq \in \frn/\frd}P_\frq|_{\Sigma\setminus\frn}$.

\end{definition}

\begin{definition} \label{defDKS}
A collection 
$$\{ \kappa'_\frn \in \bigwedge^rH \otimes_\cO G(\frn)_{\nu(\frn)} \ | \ \frn \in \cN \}$$
is a derived-Kolyvagin system of rank $r$ if the following axioms (DK1)-(DK4) are satisfied:\\
(DK1) if $\frq \in \Sigma \setminus \frn$, then $v_\frq(\kappa'_\frn)=0$,\\
(DK2) if $\frq \in \frn$, then $u_\frq(\sum_{\frd \subset \frn}\kappa'_\frd \cD_{\frn/\frd})=0$,\\
(DK3) if $\frq \in \frn$, then $v_\frq(\kappa'_\frn)=\varphi_\frq(\kappa'_{\frn/\frq})$,\\
(DK4) if $\frq \in \frn$, then $\pi_{\frn/\frq}(\kappa'_\frn)=0$.

\end{definition}

\begin{remark}
The notion of ``pre-Kolyvagin systems" first appeared in \cite[Definition 6.2]{MR2}. Note that the notion which generalizes pre-Kolyvagin systems in \cite{MR2} 
is what we call $\theta$-Kolyvagin systems in this paper. We use the terminology ``pre-Kolyvagin system" for a different system.
\end{remark}

Next we define morphisms between these Kolyvagin systems. In the following definition, the meaning of the subscript of $F_{PT}$ is ``from pre-Kolyvagin systems to $\theta$-Kolyvagin systems", and that of $F_{PK}$, $F_{TK}$, etc. are similar (see Theorem \ref{thmKS}).

\begin{definition} \label{defmor}  
We define homomorphisms $F_{PT}$ and $F_{PK}$ from $\prod_{\frn \in \cN}\bigwedge^rH\otimes_\cO G(\Sigma)_{\nu(\frn)}$ to $\prod_{\frn \in \cN}\bigwedge^rH\otimes_\cO G(\frn)_{\nu(\frn)}$ by 
$$F_{PT}(\{a_\frn\}_\frn)=\left\{\pi_\frn(a_\frn)\right\}_\frn,$$
$$F_{PK}(\{a_\frn\}_\frn)=\left\{\sum_{\frd \subset \frn}(-1)^{\nu(\frn/\frd)}\pi_\frn(a_\frd)\prod_{\frq \in \frn/\frd}\pi_{\frn/\frq}(P_\frq)\right\}_\frn.$$
We define endomorphisms $F_{TK}, F_{TD}$, and $F_{DK}$ of $\prod_{\frn \in \cN}\bigwedge^rH\otimes_\cO G(\frn)_{\nu(\frn)}$ by
$$F_{TK}(\{a_\frn\}_\frn)=\left\{\sum_{\frd \subset \frn}a_\frd \cD_{\frn,\frn/\frd}\right\}_\frn,$$
$$F_{TD}(\{a_\frn\}_\frn)=\left\{\sum_{\frd \subset \frn}(-1)^{\nu(\frn/\frd)}a_\frd \prod_{\frq \in \frn/\frd}\pi_{\frd}(P_\frq)\right\}_\frn,$$
$$F_{DK}(\{a_\frn\}_\frn)=\left\{\sum_{\frd \subset \frn}a_\frd \cD_{\frn/\frd} \right\}_\frn.$$
\end{definition}

\begin{proposition} \label{propinj}     
$F_{TK}, F_{TD},$ and $F_{DK}$ are injective.
\end{proposition}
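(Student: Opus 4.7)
The plan is to prove injectivity of each of the three maps by induction on $\nu(\frn)$, exploiting the fact that each of the defining formulas has a triangular shape with respect to the partial order on $\cN$ given by inclusion.

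The key observation is that in each of the three sums, the term indexed by $\frd = \frn$ simplifies to exactly $a_\frn$. For $F_{TK}$ we have $\cD_{\frn,\frn/\frn} = \cD_{\frn,1} = 1 \in G(\frn)_0$ by the final clause of Definition \ref{defd}, so the $\frd = \frn$ contribution is $a_\frn \cdot 1 = a_\frn$. For $F_{TD}$ the empty product $\prod_{\frq \in \frn/\frn}\pi_\frn(P_\frq)$ equals $1$ and $(-1)^{\nu(\emptyset)} = 1$, so the $\frd = \frn$ contribution is again $a_\frn$. For $F_{DK}$ we have $\cD_{\frn/\frn} = \cD_1 = 1$, so the $\frd = \frn$ term is likewise $a_\frn$. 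Thus in each case the $\frn$-th component of $F(\{a_\frn\})$ equals $a_\frn$ plus a sum involving only terms $a_\frd$ with $\frd \subsetneq \frn$.

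Suppose now $F(\{a_\frn\}) = 0$. I would prove $a_\frn = 0$ for all $\frn \in \cN$ by induction on $\nu(\frn)$. The base case $\nu(\frn) = 0$, i.e., $\frn = 1$, is immediate, because the $1$-component of $F(\{a_\frn\})$ is simply $a_1$. For the inductive step, assume $a_\frd = 0$ in $\bigwedge^r H \otimes_\cO G(\frd)_{\nu(\frd)}$ whenever $\nu(\frd) < \nu(\frn)$. Then each such $a_\frd$ remains zero after being lifted to $\bigwedge^r H \otimes_\cO G(\frn)_{\nu(\frd)}$ along the natural map $G(\frd) \to G(\frn)$, so every term with $\frd \subsetneq \frn$ in the defining sum vanishes identically. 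Hence the vanishing of the $\frn$-component collapses to $a_\frn = 0$, completing the induction.

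There is no substantial obstacle here: the entire argument is a formal induction that depends only on the triangular shape of the three maps together with the normalization $\cD_{\frn,1} = \cD_1 = 1$ recorded in Definition \ref{defd}. The only point to verify with care is that the coefficient of $a_\frn$ in the $\frn$-component is exactly the identity of $\cO$ and not some more complicated element, and this is a direct reading of the definitions.
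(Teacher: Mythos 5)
Your proof is correct and is essentially the paper's own argument: the paper likewise proves injectivity by induction on $\nu(\frn)$, using that the $\frd=\frn$ term contributes exactly $a_\frn$ (via $\cD_{\frn,1}=\cD_1=1$ and the empty product) while all terms with $\frd\subsetneq\frn$ vanish by the inductive hypothesis. The paper writes this out only for $F_{TK}$ and notes the other two cases are identical, which matches your verification of the unit coefficient in each of the three definitions.
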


\begin{proof}
We only show for $F_{TK}$. One can show the injectivity for the others by the same method. Suppose $\{a_\frn\}_\frn \in \Ker F_{TK}$, i.e. 
$$\sum_{\frd \subset \frn}a_\frd \cD_{\frn,\frn/\frd}=0$$
for all $\frn \in \cN$. We show by induction on $\nu(\frn)$ that $a_\frn=0$. When $\nu(\frn)=0$, i.e. $\frn=1$, we have $\sum_{\frd \subset \frn}a_\frd \cD_{\frn,\frn/\frd}=a_1$ and this is $0$ by the assumption. When $\nu(\frn)>0$, by the 
inductive hypothesis we have
$$\sum_{\frd \subset \frn}a_\frd \cD_{\frn,\frn/\frd}=a_\frn \cD_{\frn,1}=a_\frn.$$
Since the left hand side is $0$ by the assumption that $F_{TK}(\{a_\frn\}_\frn)=0$, we get $a_\frn=0$.
\end{proof}

We define the following useful operator $s_{\frm,\frn}$. 

\begin{definition} \label{defs}   
For $\frn, \frm \in \cN$ such that $\frn \subset \frm$, we define an operator $s_{\frm,\frn}$ on $G(\frm)$ by 
$$s_{\frm,\frn}(g)=\sum_{\frd \subset \frn}(-1)^{\nu(\frd)}\pi_{\frm/\frd}(g).$$
This is an $\cO$-endomorphism of $G(\frm)$. 
When $\frm=\frn$, put $s_\frn=s_{\frn,\frn}$.

\end{definition}

\begin{lemma} \label{lem1}
Let $\cM$ be an $\cO$-module, and $\frn,\frm \in \cN$ such that $\frn \subset \frm$. We regard $s_{\frm,\frn}$ as an operator on $\cM\otimes_\cO G(\frm)$. Then we have the following:\\
{\rm{(i)}} $$s_{\frm,\frn}(\cM\otimes_\cO G(\frm)) \subset \cM\otimes_\cO \left( \prod_{\frq \in \frn}x_\frq \right) ,$$
where $x_\frq$ is the fixed generator of $G(\frq)_1$ and $(\prod_{\frq \in \frn}x_\frq)$ is the (principal) ideal of $G(\frm)$ generated by $\prod_{\frq \in \frn}x_\frq$. 

In particular, we have $\pi_{\frn/\frq}\circ s_{\frm,\frn}=0$ for all $\frq\in\frn$.
\\
{\rm{(ii)}} If $\frd, \frn \in \cN$ and $\frd \subset \frn$, 
and $g \in \cM\otimes_\cO G(\frd)_{\nu(\frd)}$, $h \in G(\frn)_{\nu(\frn/\frd)}$, then we have 
$$s_\frn(gh)=s_\frd(g)s_{\frn,\frn/\frd}(h).$$
\end{lemma}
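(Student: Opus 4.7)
My approach will be to reduce both parts to a direct inclusion--exclusion calculation, exploiting the graded $\cO$-algebra structure on $G(\frm)$. Specifically, the degree-$1$ part is generated by the elements $x_\frq$ for $\frq \in \frm$ (coming from the identification $I_\frm/I_\frm^2 \otimes_\bbZ \cO \simeq \bigoplus_{\frq \in \frm}\cO/(t_\frq)$), and the key fact I will use is that each projection $\pi_{\frm/\fre} : G(\frm) \to G(\frm/\fre) \hookrightarrow G(\frm)$, being induced by a quotient of group rings, is a graded $\cO$-algebra homomorphism sending $x_\frq \mapsto 0$ for $\frq \in \fre$ and fixing $x_\frq$ otherwise.

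For part (i), I will write $g \in \cM \otimes_\cO G(\frm)$ as a finite sum $\sum_I c_I \prod_{\frq \in \frm} x_\frq^{i_\frq}$ of monomials, set $\mathrm{supp}(I) = \{\frq : i_\frq \geq 1\}$, and use multiplicativity of $\pi_{\frm/\fre}$ to get $\pi_{\frm/\fre}(g) = \sum_{\mathrm{supp}(I) \cap \fre = \emptyset} c_I \prod_\frq x_\frq^{i_\frq}$. Plugging this into the definition of $s_{\frm,\frn}$ and applying the standard identity $\sum_{\fre \subset S}(-1)^{\nu(\fre)} = \delta_{S,\emptyset}$ with $S = \frn \setminus \mathrm{supp}(I)$, I will obtain
$$s_{\frm,\frn}(g) = \sum_{I \,:\, \mathrm{supp}(I) \supset \frn} c_I \prod_\frq x_\frq^{i_\frq},$$
each monomial of which is manifestly divisible by $\prod_{\frq \in \frn} x_\frq$, giving the claimed containment. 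The ``in particular'' statement will follow immediately, since $\pi_{\frn/\frq}$ kills $x_\frq$ and hence annihilates any multiple of $\prod_{\frq' \in \frn} x_{\frq'}$.

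For part (ii), I will apply the formula from (i) separately to each of $s_\frd(g)$, $s_{\frn,\frn/\frd}(h)$, and $s_\frn(gh)$. The constraints $g \in G(\frd)_{\nu(\frd)}$ (forcing $\mathrm{supp}(I) \subset \frd$ and $|I| = \nu(\frd)$) combined with the condition $\mathrm{supp}(I) \supset \frd$ coming from $s_\frd$ will leave only the unique multi-index $i_\frq = 1$ for all $\frq \in \frd$, yielding $s_\frd(g) = \alpha_g \prod_{\frq \in \frd} x_\frq$ with $\alpha_g \in \cM$ the coefficient of $\prod_{\frq \in \frd} x_\frq$ in $g$. The analogous analyses for $h$ and $gh$ (using that $\deg(gh) = \nu(\frn) = \nu(\frd) + \nu(\frn/\frd)$) give $s_{\frn,\frn/\frd}(h) = \beta_h \prod_{\frq \in \frn/\frd} x_\frq$ and $s_\frn(gh) = \gamma_{gh} \prod_{\frq \in \frn} x_\frq$. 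It then remains to expand $gh$ bilinearly and track which pairs of monomials in $g$ and $h$ can contribute to the coefficient of $\prod_{\frq \in \frn} x_\frq$; the degree count together with $\mathrm{supp}(g) \subset \frd$ will force a unique such pair, namely the $\prod_\frd x_\frq$ term of $g$ paired with the $\prod_{\frn/\frd} x_\frq$ term of $h$, so that $\gamma_{gh} = \alpha_g \beta_h$ and hence $s_\frn(gh) = s_\frd(g)\, s_{\frn,\frn/\frd}(h)$. The only bookkeeping subtlety to watch is that a monomial presentation of an element of $G(\frm)$ need not be unique because of relations among the $x_\frq$'s, but since all the maps in sight are intrinsically defined on $G(\frm)$, the final identities will hold regardless of the chosen presentation; this is the mildest of obstacles and requires nothing beyond noting it.
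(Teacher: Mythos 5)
Your proposal is correct and follows essentially the same route as the paper: both expand elements of $\cM\otimes_\cO G(\frm)$ in monomials in the $x_\frq$'s, apply the inclusion--exclusion identity $\sum_{\fre\subset S}(-1)^{\nu(\fre)}=0$ for $S\neq\emptyset$ to see that $s_{\frm,\frn}$ retains exactly the monomials whose support contains $\frn$, and then use the degree constraints in (ii) to isolate the unique surviving term of $g$, $h$, and $gh$. Your explicit remark on independence from the chosen monomial presentation is a minor point the paper leaves implicit, but the argument is the same.
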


\begin{proof}
(i) Suppose $\frn=\{ \frq_1,\ldots,\frq_\nu \}$ ($\nu=\nu(\frn)$). Take any generator of $\cM\otimes_\cO G(\frm)$, and write it as follows:
$$\sum_{\alpha}m_\alpha \otimes g_\alpha x_{\frq_1}^{\alpha_1}\cdots x_{\frq_\nu}^{\alpha_\nu},$$
where $\alpha$ runs over $\bbZ_{\geq0}^\nu$, $m_\alpha \in\cM$, and $g_\alpha \in G(\frm/\frn)$. 
Put $\frd_\alpha=\{ \frq_i \in \frn \ | \ \alpha_i=0 \}$. We have 
\begin{eqnarray}
s_{\frm,\frn}\left(\sum_{\alpha}m_\alpha \otimes g_\alpha x_{\frq_1}^{\alpha_1}\cdots x_{\frq_\nu}^{\alpha_\nu}\right) &=& \sum_{\alpha}m_\alpha\otimes\left( \sum_{\frd \subset \frn}(-1)^{\nu(\frd)}g_\alpha \pi_{\frm/\frd}(x_{\frq_1}^{\alpha_1}\cdots x_{\frq_\nu}^{\alpha_\nu}) \right) \nonumber \\
&=& \sum_{\alpha}m_\alpha \otimes \left(\sum_{\frd \subset \frd_\alpha}(-1)^{\nu(\frd)}g_\alpha x_{\frq_1}^{\alpha_1}\cdots x_{\frq_\nu}^{\alpha_\nu} \right) \nonumber 
\end{eqnarray}
(note that since $g_\alpha \in G(\frm/\frn)$, we have $\pi_{\frm/\frd}(g_\alpha)=g_\alpha$ for any $\frd \subset \frn$). If $\nu(\frd_\alpha)>0$, then we have 
$$\sum_{\frd \subset \frd_\alpha}(-1)^{\nu(\frd)}=(1-1)^{\nu(\frd_\alpha)}=0.$$
Hence we have 
$$s_{\frm,\frn}\left(\sum_{\alpha}m_\alpha \otimes g_\alpha x_{\frq_1}^{\alpha_1}\cdots x_{\frq_\nu}^{\alpha_\nu}\right)=
\sum_{\alpha, \alpha_i \geq 1}m_\alpha \otimes g_\alpha x_{\frq_1}^{\alpha_1}\cdots x_{\frq_\nu}^{\alpha_\nu}  \in \cM\otimes_\cO \left( \prod_{\frq \in \frn}x_\frq \right).$$

(ii) Suppose $\frd=\{ \frq_1,\ldots,\frq_\mu \}$, and $\frn/\frd=\{ \frq'_1,\ldots,\frq'_\nu \}$ ($\mu=\nu(\frd),$ $\nu=\nu(\frn/\frd)$). Write $g$ and $h$ as follows:
$$g=\sum_{ |\alpha|=\mu}m_\alpha \otimes x_{\frq_1}^{\alpha_1}\cdots x_{\frq_\mu}^{\alpha_\mu},$$
$$h=\sum_{ |\beta+\gamma|=\nu}a_{\beta,\gamma} x_{\frq_1}^{\beta_1}\cdots x_{\frq_\mu}^{\beta_\mu} x_{\frq'_1}^{\gamma_{1}}\cdots x_{\frq'_\nu}^{\gamma_{\nu}},$$
where $m_\alpha \in \cM$ and $a_{\beta,\gamma}\in \cO$ ($|\alpha|$ means $\alpha_1+\cdots +\alpha_\mu$, and $|\beta+\gamma|$ is similar). As in the proof of (i), we have 
$$s_\frd(g)=m_{(1,\ldots,1)}\otimes x_{\frq_1}\cdots x_{\frq_\mu},$$
$$s_{\frn,\frn/\frd}(h)=a_{(0,\ldots,0),(1,\ldots,1)}x_{\frq'_1}\cdots x_{\frq'_\nu},$$
and 
$$s_\frn(gh)=a_{(0,\ldots,0),(1,\ldots,1)}m_{(1,\ldots,1)}\otimes x_{\frq_1}\cdots x_{\frq_\mu}x_{\frq'_1}\cdots x_{\frq'_\nu}.$$
Hence we have 
$$s_\frn(gh)=s_\frd(g)s_{\frn,\frn/\frd}(h).$$
\end{proof}

\begin{corollary} \label{cor1}
Let $\frn, \frm \in \cN$ such that $\frn \subset \frm$, and $g \in \cM\otimes_\cO G(\frm)_{\nu(\frn)}$. If $\pi_{\frm/\frq}(g)=0$ for 
every $\frq \in \frn$, then we have
$$g \in \cM\otimes_\cO <\prod_{\frq \in \frn}x_\frq>_\cO,$$
where $<\prod_{\frq \in \frn}x_\frq>_\cO$ is the $\cO$-submodule of $G(\frm)$ generated by $\prod_{\frq \in \frn}x_\frq$. 

In particular, we have $g \in \cM\otimes_\cO G(\frn)$.
\end{corollary}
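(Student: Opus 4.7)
The plan is to apply the operator $s_{\frm,\frn}$ to $g$ and invoke Lemma \ref{lem1}(i). First I observe that for any nonempty $\frd \subset \frn$, picking any $\frq \in \frd$ we have $\frm/\frd \subset \frm/\frq$, so the projection $\pi_{\frm/\frd}$ factors through $\pi_{\frm/\frq}$. Hence the hypothesis $\pi_{\frm/\frq}(g) = 0$ for every $\frq \in \frn$ forces $\pi_{\frm/\frd}(g) = 0$ for every nonempty $\frd \subset \frn$.

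Using Definition \ref{defs} directly, this collapses the alternating sum to a single surviving term:
$$s_{\frm,\frn}(g) \;=\; \sum_{\frd \subset \frn}(-1)^{\nu(\frd)}\pi_{\frm/\frd}(g) \;=\; \pi_\frm(g) \;=\; g,$$
since all $\frd \neq \emptyset$ contribute zero. By Lemma \ref{lem1}(i), $s_{\frm,\frn}(g) \in \cM \otimes_\cO (\prod_{\frq \in \frn} x_\frq)$, the principal ideal of $G(\frm)$ generated by $\prod_{\frq \in \frn} x_\frq$. Combining these two facts yields $g \in \cM \otimes_\cO (\prod_{\frq \in \frn} x_\frq)$.

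Now a short grading argument upgrades this from an ideal membership to an $\cO$-module membership. The element $g$ is homogeneous of degree $\nu(\frn)$ in $G(\frm)$, and $\prod_{\frq \in \frn} x_\frq$ is also homogeneous of degree $\nu(\frn)$. Since multiplication in the graded ring $G(\frm)$ adds degrees, the degree-$\nu(\frn)$ component of the ideal $(\prod_{\frq \in \frn} x_\frq)$ coincides with $G(\frm)_0 \cdot \prod_{\frq \in \frn} x_\frq = \cO \cdot \prod_{\frq \in \frn} x_\frq$, i.e.\ with the $\cO$-submodule $\langle \prod_{\frq \in \frn} x_\frq\rangle_\cO$. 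This proves the first claim. For the ``in particular'' statement, $\prod_{\frq \in \frn} x_\frq$ already lies in $G(\frn)_{\nu(\frn)}$ under the natural embedding $G(\frn) \hookrightarrow G(\frm)$, so $g \in \cM \otimes_\cO G(\frn)$.

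There is no serious obstacle: the proof is essentially a two-line application of Lemma \ref{lem1}(i), with the only mild subtlety being the passage from ``element of the ideal $(\prod x_\frq)$'' to ``$\cO$-multiple of $\prod x_\frq$'', which is forced by the homogeneity of $g$.
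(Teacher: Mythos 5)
Your proof is correct and follows essentially the same route as the paper's: both show that the hypothesis forces $s_{\frm,\frn}(g)=g$, deduce from Lemma \ref{lem1}(i) (or its explicit expansion) that $g$ lies in the ideal generated by $\prod_{\frq\in\frn}x_\frq$, and then use homogeneity of degree $\nu(\frn)$ to cut the ideal down to the $\cO$-submodule $\langle\prod_{\frq\in\frn}x_\frq\rangle_\cO$. The only cosmetic difference is that you cite Lemma \ref{lem1}(i) directly where the paper repeats the monomial expansion from its proof.
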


\begin{proof}
Suppose $\frn=\{ \frq_1,\ldots,\frq_\nu\}$ ($\nu=\nu(\frn)$). Write $g$ as
$$g=\sum_j\sum_{\alpha}m_j \otimes g_{j,\alpha} x_{\frq_1}^{\alpha_1}\cdots x_{\frq_\nu}^{\alpha_\nu},$$
where $m_j \in\cM$, and $g_{j,\alpha} \in G(\frm/\frn)$. As in the proof of Lemma \ref{lem1}, we have 
$$s_{\frm,\frn}\left( \sum_j\sum_{\alpha}m_j \otimes g_{j,\alpha} x_{\frq_1}^{\alpha_1}\cdots x_{\frq_\nu}^{\alpha_\nu} \right)=\sum_j\sum_{\alpha, \alpha_i \geq1}m_j \otimes g_{j,\alpha} x_{\frq_1}^{\alpha_1}\cdots x_{\frq_\nu}^{\alpha_\nu}.$$
Since $\pi_{\frm/\frq}(g)=0$ for every $\frq \in \frn$ by the assumption, we have $s_{\frm,\frn}(g)=g$ (by the definition of $s_{\frm,\frn}$). 
Hence we have 
$$g=\sum_j\sum_{\alpha, \alpha_i \geq1}m_j \otimes g_{j,\alpha} x_{\frq_1}^{\alpha_1}\cdots x_{\frq_\nu}^{\alpha_\nu}.$$
Since $g\in\cM\otimes_\cO G(\frm)_\nu$ ($g$ is ``homogeneous of degree $\nu$"), each $\alpha_i$ must be equal to $1$, and hence the right hand side must be in $\cM\otimes_\cO <\prod_{\frq \in \frn}x_\frq>_\cO$. 
\end{proof}

\begin{lemma} \label{lem2}
If $\{ \pkappa_\frn \in \bigwedge^rH \otimes_\cO G(\Sigma)_{\nu(\frn)} \ | \ \frn \in \cN \}$ satisfies {\rm(PK4)}, then 
we have the following: if $\frn \subset \frm$, then for every $\frq \in \frn$, we have 
$$\pi_{\frm/\frq}( \sum_{\frd \subset \frn}(-1)^{\nu(\frn/\frd)}\pi_\frm(\pkappa_\frd)\prod_{\frq' \in \frn/\frd}\pi_{\frm/\frq'}(P_{\frq'})  )=0.$$
\end{lemma}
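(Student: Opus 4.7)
The plan is to fix $\mathfrak{q} \in \mathfrak{n}$ and set up a sign-reversing involution on the index set $\{\mathfrak{d} \subset \mathfrak{n}\}$ that exhibits the sum (after projection by $\pi_{\mathfrak{m}/\mathfrak{q}}$) as a telescoping cancellation, with axiom (PK4) providing the link between the paired terms.

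More concretely, I would split the sum over $\mathfrak{d} \subset \mathfrak{n}$ into two pieces according to whether $\mathfrak{q} \in \mathfrak{d}$ or $\mathfrak{q} \notin \mathfrak{d}$, and pair each $\mathfrak{d}' \subset \mathfrak{n}$ with $\mathfrak{q} \notin \mathfrak{d}'$ with $\mathfrak{d} = \mathfrak{d}' \cup \{\mathfrak{q}\}$. Since $\nu(\mathfrak{n}/\mathfrak{d}) = \nu(\mathfrak{n}/\mathfrak{d}') - 1$, the two summands come with opposite signs. For the $\mathfrak{d}$-term, I would apply (PK4) to rewrite
\[
\pi_{\mathfrak{m}/\mathfrak{q}}(\tilde\kappa_\mathfrak{d}) = \pi_{\mathfrak{m}/\mathfrak{q}}\bigl(\tilde\kappa_\mathfrak{d}|_{\Sigma\setminus\mathfrak{q}}\bigr) = \pi_{\mathfrak{m}/\mathfrak{q}}(\tilde\kappa_{\mathfrak{d}'})\cdot\pi_{\mathfrak{m}/\mathfrak{q}}(P_\mathfrak{q}),
\]
which is legitimate because $\mathfrak{m}/\mathfrak{q} \subset \Sigma\setminus\mathfrak{q}$. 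The factor $\pi_{\mathfrak{m}/\mathfrak{q}}(P_\mathfrak{q})$ produced this way is precisely the factor indexed by $\mathfrak{q}' = \mathfrak{q}$ in the product $\prod_{\mathfrak{q}'\in\mathfrak{n}/\mathfrak{d}'}\pi_{\mathfrak{m}/\mathfrak{q}'}(P_{\mathfrak{q}'})$ that appears in the $\mathfrak{d}'$-term after applying $\pi_{\mathfrak{m}/\mathfrak{q}}$ (using the identity $\pi_{\mathfrak{m}/\mathfrak{q}}\circ\pi_{\mathfrak{m}/\mathfrak{q}} = \pi_{\mathfrak{m}/\mathfrak{q}}$).

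For the remaining factors indexed by $\mathfrak{q}' \in \mathfrak{n}/\mathfrak{d}'$ with $\mathfrak{q}' \neq \mathfrak{q}$, both the paired summands yield the same product $\prod_{\mathfrak{q}' \neq \mathfrak{q}} \pi_{\mathfrak{m}/\{\mathfrak{q},\mathfrak{q}'\}}(P_{\mathfrak{q}'})$, using that $\pi_{\mathfrak{m}/\mathfrak{q}}\circ\pi_{\mathfrak{m}/\mathfrak{q}'}=\pi_{\mathfrak{m}/\{\mathfrak{q},\mathfrak{q}'\}}$ (and noting that in the $\mathfrak{d}$-term $\mathfrak{n}/\mathfrak{d} = (\mathfrak{n}/\mathfrak{d}')\setminus\{\mathfrak{q}\}$, which contains exactly the same $\mathfrak{q}' \neq \mathfrak{q}$). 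Thus the two paired terms become identical in absolute value but opposite in sign, and cancel.

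The main bookkeeping obstacle is keeping the various projections straight: $\pi_\mathfrak{m}$ applied first, then $\pi_{\mathfrak{m}/\mathfrak{q}}$, together with the product of $\pi_{\mathfrak{m}/\mathfrak{q}'}(P_{\mathfrak{q}'})$'s, must be carefully composed so that the $P_\mathfrak{q}$ arising from (PK4) lands in the correct group $G(\mathfrak{m}/\mathfrak{q})$ to match the existing $\pi_{\mathfrak{m}/\mathfrak{q}}(P_\mathfrak{q})$ factor. Once this compatibility is verified, the cancellation is immediate and the lemma follows.
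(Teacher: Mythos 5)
Your proposal is correct and is essentially the paper's own argument: the paper likewise splits the sum over $\frd\subset\frn$ according to whether $\frq\in\frd$, applies $\pi_{\frm/\frq}$, and uses (PK4) (which applies because $\pi_{\frm/\frq}$ factors through $(\cdot)|_{\Sigma\setminus\frq}$) to convert $\pi_{\frm/\frq}(\pkappa_{\frd'\frq})$ into $\pi_{\frm/\frq}(\pkappa_{\frd'})\cdot\pi_{\frm/\frq}(P_\frq)$, after which the paired terms cancel by the sign flip $\nu(\frn/\frd'\frq)=\nu(\frn/\frd')-1$. No gaps.
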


\begin{proof}
\begin{eqnarray}
& & \pi_{\frm/\frq}( \sum_{\frd \subset \frn}(-1)^{\nu(\frn/\frd)}\pi_\frm(\pkappa_\frd)\prod_{\frq' \in \frn/\frd}\pi_{\frm/\frq'}(P_{\frq'}) ) \nonumber \\
&=& \pi_{\frm/\frq}( \sum_{\frd \subset \frn/\frq}(-1)^{\nu(\frn/\frd)}\pi_\frm(\pkappa_\frd)\prod_{\frq' \in \frn/\frd}\pi_{\frm/\frq'}(P_{\frq'})  + \sum_{\frd \subset \frn/\frq}(-1)^{\nu(\frn/\frd\frq)}\pi_\frm(\pkappa_{\frd\frq})\prod_{\frq'' \in \frn/\frd\frq}\pi_{\frm/\frq''}(P_{\frq''})  ) \nonumber \\
&=& \sum_{\frd \subset \frn/\frq}(-1)^{\nu(\frn/\frd)}\pi_{\frm/\frq}( \pkappa_\frd\prod_{\frq' \in \frn/\frd}\pi_{\frm/\frq'}(P_{\frq'}) )  + \sum_{\frd \subset \frn/\frq}(-1)^{\nu(\frn/\frd\frq)}\pi_{\frm/\frq}( \pkappa_{\frd\frq}\prod_{\frq'' \in \frn/\frd\frq}\pi_{\frm/\frq''}(P_{\frq''}) ) \nonumber \\
&=& \sum_{\frd \subset \frn/\frq}(-1)^{\nu(\frn/\frd)}\pi_{\frm/\frq}( \pkappa_\frd\prod_{\frq' \in \frn/\frd}\pi_{\frm/\frq'}(P_{\frq'}) ) + \sum_{\frd \subset \frn/\frq}(-1)^{\nu(\frn/\frd\frq)}\pi_{\frm/\frq}( \pkappa_\frd\prod_{\frq'' \in \frn/\frd}\pi_{\frm/\frq''}(P_{\frq''})  ) \nonumber \\
&=& 0, \nonumber
\end{eqnarray}
where the third equality follows from (PK4).
\end{proof}

\begin{proposition} \label{prop1} 
{\rm(i)} {\rm{(PK5)}} is equivalent to the following: \\
{\rm{(PK5)$'$}} if $\frn \subset \frm$, then $\pkappa_\frn=\sum_{\frd \subset \frn}\pi_\frm(\pkappa_\frd)\prod_{\frq \in \frn/\frd}P_\frq|_{\Sigma \setminus \frm}.$\\
{\rm(ii)} If $\{ \pkappa_\frn \in \bigwedge^rH \otimes_\cO G(\Sigma)_{\nu(\frn)} \ | \ \frn \in \cN \}$ satisfies {\rm(PK4)}, then 
we have the following: if $\frn \subset \frm$, then we have an equality in $\bigwedge^rH\otimes_\cO G(\frm)_{\nu(\frn)}$:
$$\sum_{\frd \subset \frn}(-1)^{\nu(\frn/\frd)}\pi_\frm(\pkappa_\frd)\prod_{\frq \in \frn/\frd}\pi_{\frm/\frq}(P_\frq)=
\sum_{\frd \subset \frn}(-1)^{\nu(\frn/\frd)}\pi_\frn(\pkappa_\frd)\prod_{\frq \in \frn/\frd}\pi_{\frn/\frq}(P_\frq).$$
\end{proposition}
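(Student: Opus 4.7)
The implication $(\mathrm{PK5})' \Rightarrow (\mathrm{PK5})$ in (i) is immediate by setting $\frm = \frn$. For the converse, assume $(\mathrm{PK5})$ holds for every $\frn \in \cN$ and fix $\frn \subset \frm$. The key auxiliary identity, obtained by applying $\pi_\frm$ to $(\mathrm{PK5})$ for $\pkappa_\frd$ with $\frd \subset \frn$, is
\[
\pi_\frm(\pkappa_\frd) \;=\; \sum_{\fre \subset \frd}\pi_\frd(\pkappa_\fre)\prod_{\frq' \in \frd/\fre}P_{\frq'}|_{\frm \setminus \frd},
\]
which holds because $\pi_\frd(\pkappa_\fre) \in G(\frd) \subset G(\frm)$ is fixed by $\pi_\frm$ and $\pi_\frm(P_{\frq'}|_{\Sigma \setminus \frd}) = P_{\frq'}|_{\frm \setminus \frd}$. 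I would substitute this expansion for each $\pi_\frm(\pkappa_\frd)$ on the right-hand side of $(\mathrm{PK5})'$, and iterate $(\mathrm{PK5})$ once on $\pkappa_\frn$ itself to produce a parallel double-sum expression. Using the additive decomposition $P_\frq|_{\Sigma \setminus \frn} = P_\frq|_{\frm \setminus \frn} + P_\frq|_{\Sigma \setminus \frm}$, coming from the partition $\Sigma \setminus \frn = (\frm \setminus \frn) \sqcup (\Sigma \setminus \frm)$, and reindexing the resulting sums by pairs $(\fre, \frs)$ with $\fre \subset \frn$ and $\frs \subset \frn/\fre$, the two expressions match termwise, which yields $(\mathrm{PK5})'$.

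For part (ii), set
\[
Y \;:=\; \sum_{\frd \subset \frn}(-1)^{\nu(\frn/\frd)}\pi_\frm(\pkappa_\frd)\prod_{\frq \in \frn/\frd}\pi_{\frm/\frq}(P_\frq) \;\in\; \bigwedge^r H \otimes_\cO G(\frm)_{\nu(\frn)}.
\]
Lemma \ref{lem2}, which is exactly where the hypothesis $(\mathrm{PK4})$ is used, gives $\pi_{\frm/\frq}(Y) = 0$ for every $\frq \in \frn$. Corollary \ref{cor1}, applied with $\cM = \bigwedge^r H$, then forces $Y$ into $\bigwedge^r H \otimes_\cO G(\frn)_{\nu(\frn)} \subset \bigwedge^r H \otimes_\cO G(\frm)_{\nu(\frn)}$. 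Since $\pi_\frn$ is the identity on $G(\frn)$, we have $Y = \pi_\frn(Y)$. Applying $\pi_\frn$ term by term and using the relations $\pi_\frn \circ \pi_\frm = \pi_\frn$ (since $\frn \subset \frm$) and $\pi_\frn \circ \pi_{\frm/\frq} = \pi_{\frn/\frq}$ (since $\frq \in \frn$), we rewrite $\pi_\frn(Y)$ as the right-hand side of the claimed equality.

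\textbf{Main obstacle.} Part (ii) is structurally clean once Lemma \ref{lem2} and Corollary \ref{cor1} are brought in: the argument is driven by the vanishing of certain auxiliary projections and the reconstruction of $Y$ from its $\pi_\frn$-image. The bookkeeping in part (i) is where the bulk of the care will go. Iterating $(\mathrm{PK5})$ produces nested sums indexed by chains of subsets of $\frn$, and one must systematically track which graded piece of $G(\Sigma)$ each factor lies in, together with how the projections and the pieces $P_\frq|_{\frm \setminus \frn}$, $P_\frq|_{\Sigma \setminus \frm}$ interact. Combinatorial miscounts are easy unless one fixes in advance a canonical reindexing (for instance, grouping by the innermost index $\fre$ and a subset $\frs \subset \frn/\fre$) and verifies the matching of the $P$-polynomials on each side.
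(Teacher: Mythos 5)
Part (ii) of your proposal is correct and is exactly the paper's argument: Lemma \ref{lem2} gives $\pi_{\frm/\frq}(Y)=0$ for $\frq\in\frn$, Corollary \ref{cor1} places $Y$ in $\bigwedge^rH\otimes_\cO G(\frn)$, and then $Y=\pi_\frn(Y)$ is computed term by term. The easy direction of (i) and your auxiliary identity $\pi_\frm(\pkappa_\frd)=\sum_{\fre\subset\frd}\pi_\frd(\pkappa_\fre)\prod_{\frq'\in\frd/\fre}P_{\frq'}|_{\frm\setminus\frd}$ are also fine.

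The gap is in the converse direction of (i): the two double sums you produce do \emph{not} match termwise. After substituting the auxiliary identity into the right-hand side of (PK5)$'$ and indexing by $\fre\subset\frd=\fre\sqcup\frs\subset\frn$, the $(\fre,\frs)$-term is
$$\pi_{\frd}(\pkappa_\fre)\prod_{\frq'\in\frs}P_{\frq'}|_{\frm\setminus\frd}\prod_{\frq\in\frn/\frd}P_\frq|_{\Sigma\setminus\frm},$$
whereas expanding $\pkappa_\frn$ via (PK5) and $P_\frq|_{\Sigma\setminus\frn}=P_\frq|_{\frm\setminus\frn}+P_\frq|_{\Sigma\setminus\frm}$ gives the term
$$\pi_{\frn}(\pkappa_\fre)\prod_{\frq'\in\frs}P_{\frq'}|_{\frm\setminus\frn}\prod_{\frq\in\frn/\frd}P_\frq|_{\Sigma\setminus\frm}.$$
Already for $\frs=\emptyset$ and $\fre=\frd\subsetneq\frn$ this would require $\pi_\frd(\pkappa_\frd)=\pi_\frn(\pkappa_\frd)$, which is false in general (the difference is controlled by further applications of (PK5)); likewise $P_{\frq'}|_{\frm\setminus\frd}\neq P_{\frq'}|_{\frm\setminus\frn}$ when $\frd\subsetneq\frn$. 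The total sums are equal, but that equality is essentially the statement (PK5)$'$ itself, so a one-step expansion plus a claimed termwise identification does not prove it. The paper avoids this by a double induction, on $\nu(\frn)$ and then on $\nu(\frm/\frn)$: one adjoins a single prime $\frq\in\frm/\frn$ at a time, uses the inductive hypotheses to write $\pi_\frm(\pkappa_\frd)=\sum_{\frc\subset\frd}\pi_{\frm/\frq}(\pkappa_\frc)\prod_{\frq'\in\frd/\frc}\pi_\frq(P_{\frq'})$, and uses only the one-prime decomposition $P_{\frq'}|_{\Sigma\setminus(\frm/\frq)}=P_{\frq'}|_{\Sigma\setminus\frm}+\pi_\frq(P_{\frq'})$, which keeps the bookkeeping collapsible. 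You would either need to set up this induction or carry out the full iterated expansion over chains of subsets (as in Step 5 of the proof of Theorem \ref{thmKS}); the single iteration as written does not close.
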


\begin{proof}
(i) One sees immediately that (PK5)$'$ implies (PK5) (take $\frm=\frn$ in (PK5)$'$, this is (PK5)). Suppose (PK5) and we show (PK5)$'$ by induction 
on $\nu(\frn)$. When $\nu(\frn)=0$, i.e. $\frn=1$, we have 
$$\pkappa_1=\pi_\frm(\pkappa_1)$$
for any $\frm$ since $\pkappa_1 \in \bigwedge^rH\otimes_\cO G(\Sigma)_0=\bigwedge^rH$, and we have 
$$\sum_{\frd \subset 1}\pi_\frm(\pkappa_\frd)\prod_{\frq \in 1/\frd}P_\frq|_{\Sigma \setminus \frm}=\pi_\frm(\pkappa_1)$$
so (PK5)$'$ is satisfied in this case. When $\nu(\frn)>0$, we prove (PK5)$'$ by induction on $\nu(\frm/\frn)$. 
When $\nu(\frm/\frn)=0$, i.e. $\frm=\frn$, there is nothing to prove because it is (PK5). When $\nu(\frm/\frn) >0$, take any $\frq \in \frm/\frn$. We have for any $\frd \subset \frn$
\begin{eqnarray}
\pi_\frm(\pkappa_\frd)=\sum_{\frc \subset \frd}\pi_{\frm/\frq}(\pkappa_\frc)\prod_{\frq' \in \frd/\frc}\pi_\frq(P_{\frq'}) \label{eq1}.
\end{eqnarray}
To see this, if $\frd \neq \frn$ we get this equality by the inductive hypothesis on $\nu(\frn)$ 
(replace $\frn$, $\frm$ in (PK5)$'$ by $\frd$,  $\frm/\frq$ respectively then apply $\pi_\frm$). 
If $\frd=\frn$ we get the equality by the inductive hypothesis on $\nu(\frm/\frn)$ (replace 
$\frm$ in (PK5)$'$ by $\frm/\frq$ then apply $\pi_\frm$).

Hence we have 
\begin{eqnarray}
\sum_{\frd \subset \frn}\pi_\frm(\pkappa_\frd)\prod_{\frq' \in \frn/\frd}P_{\frq'}|_{\Sigma \setminus \frm} &=& 
\sum_{\frd \subset \frn}\sum_{\frc \subset \frd}\pi_{\frm/\frq}(\pkappa_\frc)\prod_{\frq'' \in \frd/\frc}\pi_\frq(P_{\frq''})\prod_{\frq' \in \frn/\frd}P_{\frq'}|_{\Sigma \setminus \frm} \nonumber \\
&=& \sum_{\frd \subset \frn}\pi_{\frm/\frq}(\pkappa_\frd)\prod_{\frq' \in \frn/\frd}P_{\frq'}|_{\Sigma \setminus (\frm/\frq)} \nonumber \\
&=& \pkappa_\frn, \nonumber
\end{eqnarray}
where the first equality is obtained by (\ref{eq1}), and the second is by the direct computation (note that $P_{\frq'}|_{\Sigma \setminus (\frm/\frq)}=P_{\frq'}|_{\Sigma \setminus \frm}+\pi_\frq(P_{\frq'})$), and the last is by the inductive hypothesis on $\nu(\frm/\frn)$ 
(replace $\frm$ in (PK5)$'$ by $\frm/\frq$). This completes the proof of (i).

(ii) From Lemma \ref{lem2} and Corollary \ref{cor1}, we have 
$$\sum_{\frd \subset \frn}(-1)^{\nu(\frn/\frd)}\pi_\frm(\pkappa_\frd)\prod_{\frq \in \frn/\frd}\pi_{\frm/\frq}(P_\frq) \in \bigwedge^rH\otimes_\cO G(\frn),$$
so the left hand side does not change when we apply $\pi_\frn$. Hence we have 
\begin{eqnarray}
\sum_{\frd \subset \frn}(-1)^{\nu(\frn/\frd)}\pi_\frm(\pkappa_\frd)\prod_{\frq \in \frn/\frd}\pi_{\frm/\frq}(P_\frq)&=&
\pi_\frn \left(\sum_{\frd \subset \frn}(-1)^{\nu(\frn/\frd)}\pi_\frm(\pkappa_\frd)\prod_{\frq \in \frn/\frd}\pi_{\frm/\frq}(P_\frq) \right) \nonumber \\
&=& \sum_{\frd \subset \frn}(-1)^{\nu(\frn/\frd)}\pi_\frn(\pkappa_\frd)\prod_{\frq \in \frn/\frd}\pi_{\frn/\frq}(P_\frq). \nonumber
\end{eqnarray}

\end{proof}

\begin{proposition} \label{propd}
Suppose $\frd, \frn \in \cN$ and $\frd \subset \frn$. \\
{\rm{(i)}} If $\frq \in \frd$, then $\pi_{\frn/\frq}(\cD_{\frn, \frd})=-\cD_{\frn/\frq,\frd/\frq}\cdot\pi_{\frn/\frd}(P_\frq).$ \\
{\rm{(ii)}} If $\frq \in \frn/\frd$, then $\pi_{\frn/\frq}(\cD_{\frn,\frd})=\cD_{\frn/\frq,\frd}.$ \\
{\rm{(iii)}} $s_{\frn,\frd}(\cD_{\frn,\frd})=\cD_\frd.$
\end{proposition}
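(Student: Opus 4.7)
The plan is to deduce (i) and (ii) by a direct entry-by-entry calculation of how $\pi_{\frn/\frq}$ acts on the matrix defining $\cD_{\frn,\frd}$, and to deduce (iii) from (i) together with Lemma \ref{lem1}(i). For (ii), since $\frq \in \frn/\frd$ is disjoint from $\frd$, every off-diagonal entry $-\pi_{\frq_j}(P_{\frq_i})$ is unchanged by $\pi_{\frn/\frq}$ (because $\frq_j \in \frd \subset \frn/\frq$), while every diagonal entry $-\pi_{\frn/\frd}(P_{\frq_i})$ becomes $-\pi_{(\frn/\frq)/\frd}(P_{\frq_i})$; the resulting matrix is exactly the one defining $\cD_{\frn/\frq,\frd}$. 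For (i), write $\frq = \frq_1 \in \frd$: each entry $-\pi_{\frq_1}(P_{\frq_i})$ of column $1$ with $i \geq 2$ becomes $-\pi_\emptyset(P_{\frq_i}) = 0$ under $\pi_{\frn/\frq_1}$, since $P_{\frq_i}$ has positive degree in $G(\Sigma\setminus\frq_i)$, whereas the $(1,1)$-entry $-\pi_{\frn/\frd}(P_{\frq_1})$ is preserved (as $\frq_1 \notin \frn/\frd$). Cofactor expansion along the first column yields $\pi_{\frn/\frq_1}(\cD_{\frn,\frd}) = -\pi_{\frn/\frd}(P_{\frq_1})\cdot\det(M_{11})$, and inspection shows that the minor $M_{11}$ coincides with the matrix defining $\cD_{\frn/\frq_1, \frd/\frq_1}$.

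For (iii), Lemma \ref{lem1}(i) applied with $\cM = \cO$ forces $s_{\frn,\frd}(\cD_{\frn,\frd})$ into the ideal of $G(\frn)$ generated by $\prod_{\frq \in \frd} x_\frq$; since $\cD_{\frn,\frd}$ is homogeneous of degree $\nu(\frd)$, the image must be of the form $c\prod_{\frq \in \frd} x_\frq$ for some $c \in \cO$, which already lies in the subalgebra $G(\frd) \subset G(\frn)$. In particular $\pi_\frd$ acts as the identity on $s_{\frn,\frd}(\cD_{\frn,\frd})$. Since $\pi_\frd \circ \pi_{\frn/\frc} = \pi_{\frd/\frc}$ for all $\frc \subset \frd$, applying $\pi_\frd$ to the definition of $s_{\frn,\frd}$ then yields
$$s_{\frn,\frd}(\cD_{\frn,\frd}) = \sum_{\frc \subset \frd}(-1)^{\nu(\frc)}\pi_{\frd/\frc}(\cD_{\frn,\frd}).$$
For each nonempty $\frc \subset \frd$, pick $\frq \in \frc$; then $\pi_{\frd/\frc}$ factors through $\pi_{\frn/\frq}$, so by (i) we have $\pi_{\frd/\frc}(\cD_{\frn,\frd}) = -\pi_{\frd/\frc}(\cD_{\frn/\frq, \frd/\frq}) \cdot \pi_{\frd/\frc}(\pi_{\frn/\frd}(P_\frq))$, and the second factor equals $\pi_\emptyset(P_\frq) = 0$ because $(\frn/\frd) \cap (\frd/\frc) = \emptyset$ and $P_\frq$ has positive degree. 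Hence only the $\frc = \emptyset$ term survives, and $s_{\frn,\frd}(\cD_{\frn,\frd}) = \pi_\frd(\cD_{\frn,\frd}) = \cD_\frd$.

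The main obstacle is the bookkeeping in (iii): one has to recognize that Lemma \ref{lem1}(i) forces the output of $s_{\frn,\frd}$ into the small subalgebra $G(\frd)$, which legitimizes applying $\pi_\frd$, and then use (i) to annihilate every nontrivial term of the alternating sum. All remaining steps are routine linear algebra on the matrix defining $\cD_{\frn,\frd}$.
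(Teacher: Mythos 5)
Your proofs of (i) and (ii) are correct and essentially identical to the paper's: an entry-by-entry computation of $\pi_{\frn/\frq}$ on the defining matrix followed by cofactor expansion (the paper expands along the last column rather than the first, which is immaterial since $\cD_{\frn,\frd}$ is independent of the ordering of $\frd$). For (iii) you take a genuinely different route. The paper argues directly on the permutation expansion of the determinant: $s_{\frn,\frd}$ kills every monomial not divisible by $\prod_{\frq\in\frd}x_\frq$ and fixes the rest, and since each diagonal entry $-\pi_{\frn/\frd}(P_{\frq_i})$ involves only $x_{\frq'}$ with $\frq'\in\frn/\frd$, the surviving monomials are exactly the derangement terms, whose sum is $\cD_\frd$ by definition. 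You instead use Lemma \ref{lem1}(i) plus homogeneity to place $s_{\frn,\frd}(\cD_{\frn,\frd})$ in $\cO\cdot\prod_{\frq\in\frd}x_\frq\subset G(\frd)$, which legitimizes applying $\pi_\frd$ and converting the defining sum into $\sum_{\frc\subset\frd}(-1)^{\nu(\frc)}\pi_{\frd/\frc}(\cD_{\frn,\frd})$; then part (i) together with the vanishing of $\pi_{\frd/\frc}\circ\pi_{\frn/\frd}$ on degree-one elements annihilates every term with $\frc\neq\emptyset$, leaving $\pi_\frd(\cD_{\frn,\frd})=\cD_\frd$. Both arguments are valid (your use of multiplicativity of the projections $\pi_{\frn'}$ is the same implicit fact the paper uses when applying $\pi_{\frn/\frq}$ entrywise to a determinant); the paper's is shorter and self-contained, while yours is more formal, avoids re-examining the determinant expansion, and exhibits (iii) as a consequence of (i).
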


\begin{proof}

(i) Suppose $\frd=\{ \frq_1,\ldots, \frq_\nu \}$ and $\frq=\frq_\nu$. By the definition of $\cD_{\frn,\frd}$ (see Definition \ref{defd}), we have
\begin{eqnarray}
\pi_{\frn/\frq} \left( \cD_{\frn, \frd} \right) &=& \pi_{\frn/\frq}\left(\left|
\begin{array}{ccccc}
	-\pi_{\frn/\frd}(P_{\frq_1}) &-\pi_{\frq_2}(P_{\frq_1}) &\cdots& &-\pi_{\frq_\nu}(P_{\frq_1}) \\ 
	-\pi_{\frq_1}(P_{\frq_2}) &-\pi_{\frn/\frd}(P_{\frq_2}) &-\pi_{\frq_3}(P_{\frq_2}) & \cdots & -\pi_{\frq_\nu}(P_{\frq_2}) \\
	\vdots & -\pi_{\frq_2}(P_{\frq_3}) & \ddots & &\vdots\\
	\vdots & \vdots & &\ddots &\vdots\\
	-\pi_{\frq_1}(P_{\frq_\nu}) &-\pi_{\frq_2}(P_{\frq_\nu}) &\cdots & &-\pi_{\frn/\frd}(P_{\frq_\nu})
\end{array}
\right|\right) \nonumber \\
&=& \left|
\begin{array}{ccccc}
	-\pi_{\frn/\frd}(P_{\frq_1}) &-\pi_{\frq_2}(P_{\frq_1}) &\cdots& &0 \\ 
	-\pi_{\frq_1}(P_{\frq_2}) &-\pi_{\frn/\frd}(P_{\frq_2}) &-\pi_{\frq_3}(P_{\frq_2}) & \cdots & 0\\
	\vdots & -\pi_{\frq_2}(P_{\frq_3}) & \ddots & &\vdots\\
	\vdots & \vdots & &\ddots &0\\
	-\pi_{\frq_1}(P_{\frq_\nu}) &-\pi_{\frq_2}(P_{\frq_\nu}) &\cdots & &-\pi_{\frn/\frd}(P_{\frq_\nu})
\end{array}
\right| \nonumber \\
&=& -\cD_{\frn/\frq,\frd/\frq}\cdot\pi_{\frn/\frd}(P_\frq). \nonumber
\end{eqnarray}

(ii) Suppose $\frd=\{ \frq_1,\ldots, \frq_\nu \}$. By the definition of $\cD_{\frn,\frd}$, we have
\begin{eqnarray}
\pi_{\frn/\frq} \left( \cD_{\frn, \frd} \right) &=& \pi_{\frn/\frq}\left(\left|
\begin{array}{ccccc}
	-\pi_{\frn/\frd}(P_{\frq_1}) &-\pi_{\frq_2}(P_{\frq_1}) &\cdots& &-\pi_{\frq_\nu}(P_{\frq_1}) \\ 
	-\pi_{\frq_1}(P_{\frq_2}) &-\pi_{\frn/\frd}(P_{\frq_2}) &-\pi_{\frq_3}(P_{\frq_2}) & \cdots & -\pi_{\frq_\nu}(P_{\frq_2}) \\
	\vdots & -\pi_{\frq_2}(P_{\frq_3}) & \ddots & &\vdots\\
	\vdots & \vdots & &\ddots &\vdots\\
	-\pi_{\frq_1}(P_{\frq_\nu}) &-\pi_{\frq_2}(P_{\frq_\nu}) &\cdots & &-\pi_{\frn/\frd}(P_{\frq_\nu})
\end{array}
\right|\right) \nonumber \\
&=& \left|
\begin{array}{ccccc}
	-\pi_{\frn/\frd\frq}(P_{\frq_1}) &-\pi_{\frq_2}(P_{\frq_1}) &\cdots& &-\pi_{\frq_\nu}(P_{\frq_1}) \\ 
	-\pi_{\frq_1}(P_{\frq_2}) &-\pi_{\frn/\frd\frq}(P_{\frq_2}) &-\pi_{\frq_3}(P_{\frq_2}) & \cdots & -\pi_{\frq_\nu}(P_{\frq_2}) \\
	\vdots & -\pi_{\frq_2}(P_{\frq_3}) & \ddots & &\vdots\\
	\vdots & \vdots & &\ddots &\vdots\\
	-\pi_{\frq_1}(P_{\frq_\nu}) &-\pi_{\frq_2}(P_{\frq_\nu}) &\cdots & &-\pi_{\frn/\frd\frq}(P_{\frq_\nu})
\end{array}
\right| \nonumber \\
&=& \cD_{\frn/\frq,\frd}. \nonumber
\end{eqnarray}

(iii) As in the proof of Lemma \ref{lem1} (i), $s_{\frn,\frd}$ eliminates all the terms other than ``$\prod_{\frq \in \frd}x_\frq$-terms". When we expand the determinant $\cD_{\frn,\frd}$, the sum of its ``$\prod_{\frq \in \frd}x_\frq$-terms" is equal to $\cD_\frd$. Hence we have $s_{\frn,\frd}(\cD_{\frn,\frd})=\cD_{\frd}$.
\end{proof}

\begin{theorem} \label{thmKS}   
The following diagram is commutative and all the morphisms are isomorphisms:
\[\xymatrix{
{\rm{PKS}}_r \ar[d]_{F_{PK}} \ar[r]^{F_{PT}} & {\rm{TKS}}_r \ar[d]^{F_{TD}} \ar[dl]_{F_{TK}} \\
{\rm{KS}}_r  & {\rm{DKS}}_r. \ar[l]^{F_{DK}} \\
}\]
\end{theorem}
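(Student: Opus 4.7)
The approach is to split the proof into three tasks: (a) the diagram commutes, (b) each arrow restricts correctly between the indicated types of Kolyvagin systems, and (c) each arrow is bijective.

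For (a), the relation $F_{TK} = F_{DK} \circ F_{TD}$ is a purely formal identity on the ambient product module $\prod_\frn \bigwedge^r H \otimes_\cO G(\frn)_{\nu(\frn)}$; it reduces to the combinatorial identity
\[
\cD_{\frn,\frn/\frd} \;=\; \sum_{\frd \subset \frc \subset \frn}(-1)^{\nu(\frc/\frd)}\,\cD_{\frn/\frc}\prod_{\frq \in \frc/\frd} \pi_\frd(P_\frq),
\]
which I would prove by induction on $\nu(\frn/\frd)$ using the cofactor expansions in Proposition \ref{propd}. The relation $F_{TK} \circ F_{PT} = F_{PK}$ is not a formal identity but holds on PKS: using (PK4) together with Proposition \ref{prop1}(ii), one can convert $\pi_\frn(\pkappa_\frd)$ into $\pi_\frd(\pkappa_\frd)$ plus correction terms involving the $P_\frq$, and the alternating sum defining $F_{PK}(\pkappa)_\frn$ can then be rewritten as $\sum_{\frd \subset \frn}\pi_\frd(\pkappa_\frd)\cD_{\frn,\frn/\frd}$.

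For (b), each axiom is checked by direct calculation: (K1)-(K2) follow from (PK1)-(PK2) (resp.\ (TK1)-(TK2)), (K3) from (PK3)/(TK3), and (K4) from Lemma \ref{lem2} and Corollary \ref{cor1}. The transition from TKS to DKS via $F_{TD}$ uses (TK4) to force (DK4), and the transitions into KS or DKS exploit the projection formulas for $\cD_{\frn,\frd}$ in Proposition \ref{propd}.

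For (c), the injectivity of $F_{TK}, F_{TD}, F_{DK}$ is Proposition \ref{propinj}; all three have ``unipotent lower-triangular'' form with respect to inclusion of $\frn$ and therefore admit recursive inverses on $\nu(\frn)$, with the commutativity from step (a) transferring the axioms to their inverses. For $F_{PT}$, an explicit inverse is constructed as follows: given $\{\theta_\frn\} \in \mathrm{TKS}_r$, define $\pkappa_\frn \in \bigwedge^r H \otimes_\cO G(\Sigma)_{\nu(\frn)}$ inductively via formula (PK5)$'$, starting with $\pi_\frn(\pkappa_\frn) = \theta_\frn$; axioms (PK1)-(PK5) then follow from (TK1)-(TK4). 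Finally, $F_{PK}$ is bijective because $F_{PK} = F_{TK}\circ F_{PT}$. The main obstacle I anticipate lies here: verifying that the candidate pre-Kolyvagin system built from $\{\theta_\frn\}$ satisfies (PK4)---the restriction compatibility along $\Sigma\setminus\frq$---requires a delicate unwinding of the alternating sums in (TK3)-(TK4), and is closely intertwined with the determinantal identity underlying step (a).
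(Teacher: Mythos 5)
Your proposal departs from the paper in which arrows it inverts explicitly, and this produces both a genuine refinement and a genuine gap. The refinement: your claim that $F_{DK}\circ F_{TD}=F_{TK}$ holds \emph{formally} on the ambient module $\prod_\frn\bigwedge^rH\otimes_\cO G(\frn)_{\nu(\frn)}$ is correct, and the identity
\[
\cD_{\frn,\frn/\frd}=\sum_{\frd\subset\frc\subset\frn}(-1)^{\nu(\frc/\frd)}\Bigl(\prod_{\frq\in\frc/\frd}\pi_{\frd}(P_\frq)\Bigr)\cD_{\frn/\frc}
\]
needs no induction at all: it is the expansion of the determinant $\cD_{\frn,\frn/\frd}$ by multilinearity in its diagonal entries, because $\cD_{\frn/\frc}$ is exactly that matrix with diagonal zeroed out and rows/columns restricted to $\frn/\frc$. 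The paper's Step 4 instead proves the relation only on $\mathrm{TKS}_r$, passing through Eq.~(\ref{eq8}) and the identity $\kappa'_\frn=s_\frn(\theta_\frn)$, both of which rely on (TK4); you have noticed that this reliance is unnecessary.

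The gap is in step (c). Unipotent lower-triangularity makes $F_{TK},F_{TD},F_{DK}$ invertible on the \emph{ambient} product, but the theorem asserts isomorphisms between the Kolyvagin-system submodules, so you must check that each recursive inverse carries $\mathrm{KS}_r$ (resp.\ $\mathrm{DKS}_r$) back into $\mathrm{TKS}_r$ (resp.\ $\mathrm{PKS}_r$) --- that is, that the inverse preserves the axioms. Your phrase ``commutativity from step (a) transferring the axioms to their inverses'' is not an argument for this: from $F_{PK}=F_{TK}\circ F_{PT}$ plus $F_{PT}\colon\mathrm{PKS}_r\to\mathrm{TKS}_r$ bijective and $F_{TK}$ injective you get that $F_{PK}$ is injective and that $\Im(F_{PK}|_{\mathrm{PKS}_r})=\Im(F_{TK}|_{\mathrm{TKS}_r})$, but nothing about either being all of $\mathrm{KS}_r$. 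You would still have to carry out the axiom-verification that is the bulk of the paper's Steps 1.2 and 2.2 (given $\kappa\in\mathrm{KS}_r$, define $\pkappa_\frn$ by (\ref{eq2}) and check (PK1)--(PK5); or given $\kappa'\in\mathrm{DKS}_r$, define $\theta_\frn$ by (\ref{eq5}) and check (TK1)--(TK4)); constructing and verifying your proposed inverse $G_{PT}$ of $F_{PT}$ spares none of it. A smaller inaccuracy: the proof of $F_{TK}\circ F_{PT}=F_{PK}$ on $\mathrm{PKS}_r$ runs through (PK5), iterated, followed by the long permutation-combinatorics lemma about $\Delta(\nu)$ in the paper's Step 5; (PK4) and Proposition \ref{prop1}(ii) are not the mechanism there (they enter earlier, in the verification of (K3)/(PK2)).
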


\begin{remark}
It is shown in \cite[Proposition 6.5]{MR2} that $F_{TK}$ induces isomorphism ${\rm{TKS}}_r \simeq {\rm{KS}}_r$ in a special case. Theorem \ref{thmKS} is a generalization of it.
\end{remark}

\begin{proof}

The strategy of the proof is as follows. The proof is divided into 5 steps. 

In Steps 1, 2, and 3, we show that $F_{PK}$, $F_{TD}$, and $F_{TK}$ are isomorphisms respectively. 

In Steps 4 and 5, we show that $F_{DK}\circ F_{TD}=F_{TK}$ and $F_{TK}\circ F_{PT}=F_{PK}$ respectively. 

By Steps 1, 3, and 5 and Proposition \ref{propinj}, we see that $F_{PT}$ is an isomorphism. By Steps 2, 3 and 4, we see that $F_{DK}$ is an 
isomorphism. Hence by all the steps, we complete the proof. 
\\
\textbf{Step 1.} We show that $F_{PK}$ is an isomorphism. Step 1 is divided into 3 steps. 

In Step 1.1, we show $F_{PK}({\rm{PKS}}_r) \subset {\rm{KS}}_r.$ 

In Step 1.2, we construct the inverse $G_{PK}$ of $F_{PK}$ and show $G_{PK}({\rm{KS}}_r) \subset {\rm{PKS}}_r$.

In Step 1.3, we show $G_{PK}\circ F_{PK}=F_{PK}\circ G_{PK}=\Id$, and this completes Step 1.
\\
\textbf{Step 1.1.} 

Suppose $\pkappa=\{\pkappa_\frn \}_\frn \in {\rm{PKS}}_r.$ Put 
$$\kappa_\frn=F_{PK}(\pkappa)_\frn=\sum_{\frd \subset \frn}(-1)^{\nu(\frn/\frd)}\pi_\frn(\pkappa_\frd)\prod_{\frq \in \frn/\frd}\pi_{\frn/\frq}(P_\frq).$$
We show that $\kappa=\{ \kappa_\frn \}_\frn =F_{PK}(\pkappa) \in {\rm{KS}}_r$. We see that $\kappa$ satisfies the axioms (K1)-(K4).
\\
(K1) Suppose $\frq' \in \Sigma \setminus \frn$. We have 
$$v_{\frq'}(\kappa_\frn)=\sum_{\frd \subset \frn}(-1)^{\nu(\frn/\frd)}\pi_\frn(v_{\frq'}(\pkappa_\frd))\prod_{\frq \in \frn/\frd}\pi_{\frn/\frq}(P_\frq)=0,$$
since $v_{\frq'}(\pkappa_\frd)=0$ for every $\frd \subset \frn$, by (PK1). This shows (K1).

From now on we suppose $\frq' \in \frn$. 
\\
(K2) By (PK2), we have
$$u_{\frq'}(\kappa_\frn)=u_{\frq'}\left( \sum_{\frd \subset \frn}(-1)^{\nu(\frn/\frd)}\pi_\frn(\pkappa_\frd)\prod_{\frq \in \frn/\frd}\pi_{\frn/\frq}(P_\frq) \right)=0.$$
This shows (K2).
\\
(K3) We have
\begin{eqnarray}
v_{\frq'}(\kappa_\frn)&=&\sum_{\frd \subset \frn}(-1)^{\nu(\frn/\frd)}\pi_\frn(v_{\frq'}(\pkappa_\frd))\prod_{\frq \in \frn/\frd}\pi_{\frn/\frq}(P_\frq) \nonumber \\
&=& \sum_{\frd \subset \frn, \frq' \in \frd}(-1)^{\nu(\frn/\frd)}\pi_\frn(v_{\frq'}(\pkappa_\frd))\prod_{\frq \in \frn/\frd}\pi_{\frn/\frq}(P_\frq) \nonumber \\
&=& \varphi_{\frq'}^\frn\left( \sum_{\frd \subset \frn/\frq'}(-1)^{\nu(\frn/\frd\frq')}\pi_\frn(\pkappa_\frd)\prod_{\frq \in \frn/\frd\frq'}\pi_{\frn/\frq}(P_\frq) \right) \nonumber \\
&=& \varphi_{\frq'}^\frn\left( \sum_{\frd \subset \frn/\frq'}(-1)^{\nu(\frn/\frd\frq')}\pi_{\frn/\frq'}(\pkappa_\frd)\prod_{\frq \in \frn/\frd\frq'}\pi_{\frn/\frq\frq'}(P_\frq) \right) \nonumber \\
&=& \varphi_{\frq'}^\frn(\kappa_{\frn/\frq'}) \nonumber \\
&=&\varphi_{\frq'}(\kappa_{\frn/\frq'}) \nonumber,
\end{eqnarray}
where the second equality follows from (PK1), that is, $v_{\frq'}(\pkappa_\frd)=0$ unless $\frq' \in \frd$, and the third from (PK3), 
that is, $v_{\frq'}(\pkappa_\frd)=\varphi_{\frq'}(\pkappa_{\frd/\frq'})$, the fourth from proposition \ref{prop1} (ii), fifth by definition, and the last from (K1).
\\
(K4) By Lemma \ref{lem2}, we have 
$$\pi_{\frn/\frq'}(\kappa_\frn)=\pi_{\frn/\frq'}\left(\sum_{\frd \subset \frn}(-1)^{\nu(\frn/\frd)}\pi_\frn(\pkappa_\frd)\prod_{\frq \in \frn/\frd}\pi_{\frn/\frq}(P_\frq)\right)=0.$$

Hence we have $\kappa \in {\rm{KS}}_r$.
\\
\textbf{Step 1.2.}

We construct the inverse $G_{PK}$ of $F_{PK}$. Suppose $\kappa=\{\kappa_\frn\}_\frn \in {\rm{KS}}_r$ is given. Put 
$$\pkappa_1=\kappa_1,$$
and define $\pkappa_\frn \in \bigwedge^rH\otimes_\cO G(\Sigma)_{\nu(\frn)}$ inductively by
\begin{equation}
\pkappa_\frn=\kappa_\frn+\sum_{\frd \subset \frn, \frd \neq \frn}\pi_\frn(\pkappa_\frd)\left\{ \left( \prod_{\frq \in \frn/\frd}P_\frq|_{\Sigma \setminus \frn} \right)-(-1)^{\nu(\frn/\frd)}\left( \prod_{\frq \in \frn/\frd}\pi_{\frn/\frq}(P_\frq) \right) \right\}. \label{eq2}
\end{equation}
We define $G_{PK}(\kappa)=\{ \pkappa_\frn \}_\frn$. We show first that $\pkappa=\{ \pkappa_\frn \}_\frn=G_{PK}(\kappa) \in {\rm{PKS}}_r$ (in Step 1.3 we show that 
$G_{PK}\circ F_{PK}=F_{PK}\circ G_{PK}=\Id$).
\\
(PK1) We show by induction on $\nu(\frn)$ that $v_{\frq'}(\pkappa_\frn)=0$ for $\frq' \in \Sigma \setminus \frn$. When $\nu(\frn)=0$ i.e. 
$\frn=1$, this is clear by (K1) since $\pkappa_1=\kappa_1$. When $\nu(\frn)>0$, we have for $\frq' \in \Sigma \setminus \frn$ 
\begin{eqnarray}
v_{\frq'}(\pkappa_\frn)&=&v_{\frq'}(\kappa_\frn)  + \sum_{\frd \subset \frn, \frd \neq \frn}\pi_\frn(v_{\frq'}(\pkappa_\frd))\left\{ \left( \prod_{\frq \in \frn/\frd}P_\frq|_{\Sigma \setminus \frn} \right)-(-1)^{\nu(\frn/\frd)}\left( \prod_{\frq \in \frn/\frd}\pi_{\frn/\frq}(P_\frq) \right) \right\} \nonumber \\ 
&=&0, \nonumber
\end{eqnarray}
by (K1) and the inductive hypothesis. This shows (PK1).
\\
(PK2) Applying $\pi_\frn$ to the both sides of (\ref{eq2}), we obtain 
\begin{equation}
\pi_\frn(\pkappa_\frn)=\kappa_\frn-\sum_{\frd \subset \frn, \frd \neq \frn}(-1)^{\nu(\frn/\frd)}\pi_\frn(\pkappa_\frd)\prod_{\frq \in \frn/\frd}\pi_{\frn/\frq}(P_\frq). \label{eq3} 
\end{equation}
Hence by (K2) we have 
$$u_{\frq'}\left( \sum_{\frd \subset \frn}(-1)^{\nu(\frn/\frd)}\pi_\frn(\pkappa_\frd)\prod_{\frq \in \frn/\frd}\pi_{\frn/\frq}(P_\frq) \right)=
u_{\frq'}(\kappa_\frn)=0,$$
for any $\frq' \in \frn$. This shows (PK2).

Next we show (PK5), (PK4), and finally (PK3).
\\
(PK5) By (\ref{eq3}), we have 
\begin{equation}
\kappa_\frn=\sum_{\frd \subset \frn}(-1)^{\nu(\frn/\frd)}\pi_\frn(\pkappa_\frd)\prod_{\frq \in \frn/\frd}\pi_{\frn/\frq}(P_\frq). \label{eq4} 
\end{equation}
Substituting this to (\ref{eq2}), we obtain 
\begin{eqnarray}
&&\pkappa_\frn \nonumber \\
&=&\sum_{\frd \subset \frn}(-1)^{\nu(\frn/\frd)}\pi_\frn(\pkappa_\frd)\prod_{\frq \in \frn/\frd}\pi_{\frn/\frq}(P_\frq)  +\sum_{\frd \subset \frn, \frd \neq \frn}\pi_\frn(\pkappa_\frd)\left\{ \left( \prod_{\frq \in \frn/\frd}P_\frq|_{\Sigma \setminus \frn} \right)-(-1)^{\nu(\frn/\frd)}\left( \prod_{\frq \in \frn/\frd}\pi_{\frn/\frq}(P_\frq) \right) \right\} \nonumber \\
&=&\sum_{\frd \subset \frn}\pi_\frn(\pkappa_\frd)\prod_{\frq \in \frn/\frd}P_\frq|_{\Sigma\setminus\frn}. \nonumber
\end{eqnarray}
This is (PK5).
\\
(PK4) We show by induction on $\nu(\frn)$ that $\pkappa_\frn|_{\Sigma\setminus\frq'}=\pkappa_{\frn/\frq'}|_{\Sigma\setminus\frq'}\cdot P_\frq'$ for any $\frq' \in \frn$. When $\nu(\frn)=1$, say $\frn=\frq'$, we have by (\ref{eq2}) 
$$\pkappa_{\frq'}|_{\Sigma\setminus\frq'}=\kappa_{\frq'}|_{\Sigma \setminus \frq'}+\pi_{\frq'}(\pkappa_1)P_{\frq'}|_{\Sigma\setminus\frq'}
=\pkappa_1\cdot P_{\frq'},$$
so (PK4) holds in this case.

When $\nu(\frn)>1$, take $\frq' \in \frn$. By (\ref{eq2}) and the fact that $\kappa_\frn|_{\Sigma\setminus\frq'}=0$ (this follows from (K4)), we have
\begin{eqnarray}
\pkappa_\frn|_{\Sigma\setminus\frq'}&=&\sum_{\frd \subset \frn, \frd \neq \frn}\pi_{\frn/\frq'}(\pkappa_\frd)\left\{ \left( \prod_{\frq \in \frn/\frd}P_\frq|_{\Sigma \setminus \frn} \right)-(-1)^{\nu(\frn/\frd)}\pi_{\frn/\frq'}\left( \prod_{\frq \in \frn/\frd}\pi_{\frn/\frq}(P_\frq) \right) \right\} \nonumber \\
&=&\sum_{\frd \subset \frn, \frd \neq \frn, \frq' \in \frd}\pi_{\frn/\frq'}(\pkappa_\frd)\left\{ \left( \prod_{\frq \in \frn/\frd}P_\frq|_{\Sigma \setminus \frn} \right) -(-1)^{\nu(\frn/\frd)}\left( \prod_{\frq \in \frn/\frd}\pi_{\frn/\frq\frq'}(P_\frq) \right) \right\} \nonumber \\
&&  + \sum_{\frd \subset \frn/\frq'}\pi_{\frn/\frq'}(\pkappa_\frd)\left\{ \left( \prod_{\frq \in \frn/\frd}P_\frq|_{\Sigma \setminus \frn} \right)-(-1)^{\nu(\frn/\frd)}\pi_{\frn/\frq'}\left( \prod_{\frq \in \frn/\frd}\pi_{\frn/\frq}(P_\frq) \right) \right\} \nonumber \\
&=&\sum_{\frd \subset \frn/\frq', \frd \neq \frn/\frq'}\pi_{\frn/\frq'}(\pkappa_\frd)\pi_{\frn/\frq'}(P_{\frq'})   \left\{ \left( \prod_{\frq \in \frn/\frd\frq'}P_\frq|_{\Sigma \setminus \frn} \right)-(-1)^{\nu(\frn/\frd\frq')}\left( \prod_{\frq \in \frn/\frd\frq'}\pi_{\frn/\frq\frq'}(P_\frq) \right) \right\}
 \nonumber \\
&& + \sum_{\frd \subset \frn/\frq'}\pi_{\frn/\frq'}(\pkappa_\frd)\left\{ \left( \prod_{\frq \in \frn/\frd}P_\frq|_{\Sigma \setminus \frn} \right)-(-1)^{\nu(\frn/\frd)}\pi_{\frn/\frq'}\left( \prod_{\frq \in \frn/\frd}\pi_{\frn/\frq}(P_\frq) \right) \right\} \nonumber \\
&=& \sum_{\frd \subset \frn/\frq'}\pi_{\frn/\frq'}(\pkappa_{\frd})\pi_{\frn/\frq'}(P_{\frq'})\prod_{\frq \in \frn/\frd\frq'}P_{\frq}|_{\Sigma\setminus\frn} + \sum_{\frd \subset \frn/\frq'}\pi_{\frn/\frq'}(\pkappa_\frd)\prod_{\frq \in \frn/\frd}P_\frq|_{\Sigma\setminus\frn} \nonumber \\
&=&\pkappa_{\frn/\frq'}|_{\Sigma\setminus\frq'}(\pi_{\frn/\frq'}(P_{\frq'})+P_{\frq'}|_{\Sigma\setminus\frn}) \nonumber \\
&=&\pkappa_{\frn/\frq'}|_{\Sigma\setminus\frq'}P_{\frq'}, \nonumber
\end{eqnarray}
where the third equality follows by the inductive hypothesis, and the fifth by (PK5).
\\
(PK3) We show by induction on $\nu(\frn)$ that $v_{\frq'}(\pkappa_\frn)=\varphi_{\frq'}(\pkappa_{\frn/\frq'})$ for any $\frq' \in \frn$. When 
$\nu(\frn)=1$, say $\frn=\frq'$, we have 
\begin{eqnarray}
v_{\frq'}(\pkappa_{\frq'})&=&v_{\frq'}(\kappa_{\frq'})+\pi_{\frq'}(v_{\frq'}(\pkappa_1))P_{\frq'}|_{\Sigma\setminus\frq'} \nonumber \\
&=&v_{\frq'}(\kappa_{\frq'}) \nonumber \\
&=&\varphi_{\frq'}(\kappa_1) \nonumber \\
&=&\varphi_{\frq'}(\pkappa_1), \nonumber
\end{eqnarray}
where the first equality follows by (\ref{eq2}), the second by (PK1), the third by (K3), and the last by the definition of $\pkappa_1$. When 
$\nu(\frn)>1$, take $\frq' \in \frn$. Then we have 
\begin{eqnarray}
v_{\frq'}(\pkappa_\frn)&=&v_{\frq'}(\kappa_\frn)+\sum_{\frd \subset \frn, \frd \neq \frn}\pi_\frn(v_{\frq'}(\pkappa_\frd))  \left\{ \left( \prod_{\frq \in \frn/\frd}P_\frq|_{\Sigma \setminus \frn} \right)-(-1)^{\nu(\frn/\frd)}\left( \prod_{\frq \in \frn/\frd}\pi_{\frn/\frq}(P_\frq) \right) \right\} \nonumber \\
&=& \varphi_{\frq'}^\frn( \kappa_{\frn/\frq'})+\sum_{\frd \subset \frn/\frq', \frd \neq \frn/\frq'}\pi_\frn(\pkappa_\frd)  \left\{ \left( \prod_{\frq \in \frn/\frd\frq'}P_\frq|_{\Sigma \setminus \frn} \right)-(-1)^{\nu(\frn/\frd\frq')}\left( \prod_{\frq \in \frn/\frd\frq'}\pi_{\frn/\frq}(P_\frq) \right) \right\} , \nonumber 
\end{eqnarray}
where the first equality follows by (\ref{eq2}), and the second by (K3) and the inductive hypothesis (note that $v_{\frq'}(\pkappa_\frd)=0$ unless $\frq' \in \frd$, by (PK1)). By (\ref{eq4}) and Proposition \ref{prop1} (ii) (note that we have already proved (PK4)), we have 
$$\kappa_{\frn/\frq'}=\sum_{\frd \subset \frn/\frq'}(-1)^{\nu(\frn/\frd\frq')}\pi_\frn(\pkappa_\frd)\prod_{\frq \in \frn/\frd\frq'}\pi_{\frn/\frq}(P_\frq).$$
Substituting this to the above, we have 
\begin{eqnarray}
v_{\frq'}(\pkappa_\frn) &=& \varphi_{\frq'}^\frn\left( \sum_{\frd \subset \frn/\frq'}\pi_\frn(\pkappa_\frd)\prod_{\frq \in \frn/\frd\frq'}P_\frq|_{\Sigma\setminus\frn} \right)
 \nonumber \\
&=& \varphi_{\frq'}^\frn(\pkappa_{\frn/\frq'}) \nonumber \\
&=&\varphi_{\frq'}(\pkappa_{\frn/\frq'}), \nonumber
\end{eqnarray}
where the second equality follows by (PK5) and Proposition \ref{prop1} (i), and the last by (PK1).

Hence $\kappa$ satisfies the axioms (PK1)-(PK5), and we have completed Step 1.2.
\\
\textbf{Step 1.3.}

In this step, we show $G_{PK}\circ F_{PK}=F_{PK}\circ G_{PK}=\Id$.

We first show $G_{PK}\circ F_{PK}=\Id$. Take any $\pkappa=\{ \pkappa_\frn \}_\frn \in {\rm{PKS}}_r$. We show by induction on $\nu(\frn)$ that 
$(G_{PK}\circ F_{PK})(\pkappa)_\frn=\pkappa_\frn$. When $\nu(\frn)=0$, i.e. $\frn=1$, by the definitions of $F_{PK}$ and $G_{PK}$, we have 
$$(G_{PK}\circ F_{PK})(\pkappa)_1=F_{PK}(\pkappa)_1=\pkappa_1.$$
When $\nu(\frn)>0$, we have 
\begin{eqnarray}
&&(G_{PK}\circ F_{PK})(\pkappa)_\frn  \nonumber \\
&=& F_{PK}(\pkappa)_\frn  + \sum_{\frd \subset \frn, \frd \neq \frn}\pi_\frn((G_{PK}\circ F_{PK})(\pkappa)_\frd)\left\{ \left( \prod_{\frq \in \frn/\frd}P_\frq|_{\Sigma \setminus \frn} \right)-(-1)^{\nu(\frn/\frd)}\left( \prod_{\frq \in \frn/\frd}\pi_{\frn/\frq}(P_\frq) \right) \right\} \nonumber \\
&=& \sum_{\frd \subset \frn}(-1)^{\nu(\frn/\frd)}\pi_\frn(\pkappa_\frd)\prod_{\frq \in \frn/\frd}\pi_{\frn/\frq}(P_\frq)  +\sum_{\frd \subset \frn, \frd \neq \frn}\pi_\frn(\pkappa_\frd)\left\{ \left( \prod_{\frq \in \frn/\frd}P_\frq|_{\Sigma \setminus \frn} \right)-(-1)^{\nu(\frn/\frd)}\left( \prod_{\frq \in \frn/\frd}\pi_{\frn/\frq}(P_\frq) \right) \right\} \nonumber \\
&=& \sum_{\frd \subset \frn}\pi_\frn(\pkappa_\frd)\prod_{\frq \in \frn/\frd}P_\frq|_{\Sigma\setminus\frn} \nonumber \\
&=& \pkappa_\frn, \nonumber
\end{eqnarray}
where the first equality follows by the definition of $G_{PK}$ (see (\ref{eq2})), the second by the definition of $F_{PK}$ (see Definition 
\ref{defmor}) and the inductive hypothesis, and the last by (PK5). 

Next we show $F_{PK}\circ G_{PK}=\Id$. Take any $\kappa=\{ \kappa_\frn\}_\frn \in {\rm{KS}}_r$. By (\ref{eq4}), we have 
$$\kappa_\frn=\sum_{\frd \subset \frn}(-1)^{\nu(\frn/\frd)}\pi_\frn(G_{PK}(\kappa)_\frd)\prod_{\frq \in \frn/\frd}\pi_{\frn/\frq}(P_\frq),$$
but the right hand side is by definition equal to $F_{PK}(G_{PK}(\kappa))_\frn$. We have completed Step 1.3.
\\
\textbf{Step 2.}

We show that $F_{TD}$ induces an isomorphism ${\rm{TKS}}_r \simeq {\rm{DKS}}_r$. Step 2 is divided into 3 steps, as in Step 1. 

In Step 2.1, we show $F_{TD}({\rm{TKS}}_r) \subset {\rm{DKS}}_r.$

In Step 2.2, we construct the inverse $G_{TD}$ of $F_{TD}$, and show $G_{TD}({\rm{DKS}}_r)\subset {\rm{TKS}}_r$.

In Step 2.3, we show $G_{TD}\circ F_{TD}=F_{TD}\circ G_{TD}=\Id.$
\\
\textbf{Step 2.1.}

Take $\theta=\{\theta_\frn\}_\frn \in {\rm{TKS}}_r$. We show that $F_{TD}(\theta) \in {\rm{DKS}}_r$. Put 
$$\kappa'_\frn=F_{TD}(\theta)_\frn=\sum_{\frd \subset \frn}(-1)^{\nu(\frn/\frd)}\theta_{\frd}\prod_{\frq \in \frn/\frd}\pi_{\frd}(P_\frq).$$
Note that by (TK4) we have 
$$\theta_{\frd}\prod_{\frq \in \frn/\frd}\pi_{\frd}(P_\frq)=\pi_\frd(\theta_\frn),$$
so we have 
\begin{equation}
\kappa'_\frn=s_\frn(\theta_\frn) \label{eqs}
\end{equation}
(see Definition \ref{defs} for the definition of $s_\frn$). We see that $\{\kappa'_\frn\}_\frn$ satisfies the axioms (DK1)-(DK4).
\\
(DK1) For any $\frq \in \Sigma\setminus\frn$, we have by (TK1)
$$v_\frq(\kappa'_\frn)=\sum_{\frd \subset \frn}(-1)^{\nu(\frn/\frd)}\pi_\frd(v_\frq(\theta_\frn))=0.$$
This is (DK1).
\\
(DK2) It is sufficient to show that 
\begin{equation}
\sum_{\frd \subset \frn}\theta_\frd\cD_{\frn,\frn/\frd}=\sum_{\frd \subset \frn}\kappa'_\frd \cD_{\frn/\frd}. \label{eq8}  
\end{equation}
(From this, (DK2) follows from (TK2)). Take $\frq \in \frn$. We have 
\begin{eqnarray}
\pi_{\frn/\frq}\left( \sum_{\frd \subset \frn}\theta_\frd\cD_{\frn,\frn/\frd} \right)&=& \pi_{\frn/\frq}\left( \sum_{\frd \subset \frn/\frq}\theta_{\frd\frq}\cD_{\frn,\frn/\frd\frq} +\sum_{\frd \subset \frn/\frq}\theta_\frd\cD_{\frn,\frn/\frd} \right) \nonumber \\
&=& \sum_{\frd \subset \frn/\frq}\theta_\frd\pi_\frd(P_\frq)\cD_{\frn/\frq,\frn/\frd\frq}-\sum_{\frd \subset \frn/\frq}\theta_\frd\cD_{\frn/\frq,\frn/\frd\frq}\pi_\frd(P_\frq) \nonumber \\
&=& 0, \nonumber
\end{eqnarray}
where the second equality follows by Proposition \ref{propd} (i), (ii) and (TK4). So we have by the definition of $s_\frn$
$$s_\frn\left(\sum_{\frd \subset \frn}\theta_\frd\cD_{\frn,\frn/\frd}\right)=\sum_{\frd \subset \frn}\theta_\frd\cD_{\frn,\frn/\frd}.$$
On the other hand, by Lemma \ref{lem1} (ii), Proposition \ref{propd} (iii), and (\ref{eqs}), we have 
$$s_\frn\left(\sum_{\frd \subset \frn}\theta_\frd\cD_{\frn,\frn/\frd}\right)=\sum_{\frd \subset \frn}\kappa'_\frd \cD_{\frn/\frd}.$$
Hence we have $\sum_{\frd \subset \frn}\theta_\frd\cD_{\frn,\frn/\frd}=\sum_{\frd \subset \frn}\kappa'_\frd \cD_{\frn/\frd}$.
\\
(DK3) Since $\kappa'_\frn=s_\frn(\theta_\frn)$, (DK3) follows from (TK3). 
\\
(DK4) Again since $\kappa'_\frn=s_\frn(\theta_\frn)$, (DK4) follows from Lemma \ref{lem1} (i). 

Hence we have completed Step 2.1.
\\
\textbf{Step 2.2.}

We construct the inverse $G_{TD}$ of $F_{TD}$. Suppose $\kappa'=\{\kappa'_\frn\}_\frn \in {\rm{DKS}}_r$. Put 
$$\theta_1=\kappa'_1,$$
and we define $\theta_\frn$ inductively by 
\begin{equation}
\theta_\frn=\kappa'_\frn-\sum_{\frd \subset \frn, \frd \neq \frn}(-1)^{\nu(\frn/\frd)}\theta_\frd\prod_{\frq \in \frn/\frd}\pi_{\frd}(P_\frq). \label{eq5}  
\end{equation}
Define $G_{TD}(\kappa')=\{\theta_\frn\}_\frn$, and we show that $G_{TD}(\kappa') \in {\rm{TKS}}_r$. 
\\
(TK1) follows from (DK1) by induction on $\nu(\frn)$.
\\
(TK4) We show by induction on $\nu(\frn)$. When $\nu(\frn)=1$, say $\frn=\frq'$, we have 
$$\pi_1(\theta_{\frq'})=0=\theta_1 \pi_1(P_{\frq'})$$
(note that $\pi_1(G(\frq')_1)=0$). When $\nu(\frn)>1$, for any $\frq' \in \frn$ we have by (\ref{eq5})
\begin{eqnarray}
\pi_{\frn/\frq'}(\theta_\frn)&=&\pi_{\frn/\frq'}(\kappa'_\frn) -\sum_{\frd \subset \frn/\frq'}(-1)^{\nu(\frn/\frd)}\theta_\frd\prod_{\frq \in \frn/\frd}\pi_\frd(P_\frq) -\sum_{\frd \subset \frn/\frq', \frd \neq \frn/\frq'}(-1)^{\nu(\frn/\frd\frq')}\pi_\frd(\theta_{\frd\frq'})\prod_{\frq \in \frn/\frd\frq'}\pi_\frd(P_\frq) \nonumber \\
&=&-\sum_{\frd \subset \frn/\frq'}(-1)^{\nu(\frn/\frd)}\theta_\frd\prod_{\frq \in \frn/\frd}\pi_\frd(P_\frq) -\sum_{\frd \subset \frn/\frq', \frd \neq \frn/\frq'}(-1)^{\nu(\frn/\frd\frq')}\theta_\frd\pi_\frd(P_{\frq'})\prod_{\frq \in \frn/\frd\frq'}\pi_\frd(P_\frq) \nonumber \\
&=&\theta_{\frn/\frq'}\pi_{\frn/\frq'}(P_{\frq'}), \nonumber
\end{eqnarray}
where the second equality follows from (DK4) and the inductive hypothesis. This shows (TK4).
\\
(TK2) By (\ref{eq5}) and (TK4), we have 
$$\theta_\frn=\kappa'_\frn-\sum_{\frd \subset \frn, \frd \neq \frn}(-1)^{\nu(\frn/\frd)}\pi_\frd(\theta_\frn).$$
Hence, 
\begin{equation}
\kappa'_\frn=\frs_n(\theta_\frn). \label{eq6}  
\end{equation}
Using (\ref{eq6}) and (TK4), we repeat the argument in the proof of (DK2) in Step 2.1 to show $\sum_{\frd \subset \frn}\theta_\frd\cD_{\frn,\frn/\frd}=\sum_{\frd \subset \frn}\kappa'_\frd \cD_{\frn/\frd}$. Hence (TK2) follows from (DK2). 
\\
(TK3) By (\ref{eq6}), (TK3) follows from (DK3). 

We have completed Step 2.2.
\\
\textbf{Step 2.3.}

To show that $F_{TD}$ induces isomorphism from ${\rm{TKS}}_r$ to ${\rm{DKS}}_r$, since we already know by Proposition \ref{propinj} that $F_{TD}$ is injective, it suffices to show $F_{TD} \circ G_{TD}=\Id$. Suppose $\kappa'=\{ \kappa'_\frn \}_\frn \in {\rm{DKS}}_r$. By (\ref{eq6}) we have 
$$\kappa'_\frn=\sum_{\frd \subset \frn}(-1)^{\nu(\frn/\frd)}\pi_\frd(G_{TD}(\kappa')_\frn)=s_\frn(G_{TD}(\kappa')_\frn).$$
By (\ref{eqs}) we have 
$$F_{TD}(G_{TD}(\kappa'))_\frn=s_\frn(G_{TD}(\kappa')_\frn),$$
which completes Step 2.3.
\\
\textbf{Step 3.}

Since the bijectivity of $F_{TK}$ is shown similarly as in Step 2 (or in the proof of \cite[Proposition 6.5]{MR2}), we omit the proof. 
\\
\textbf{Step 4.}
 
We show $F_{DK}\circ F_{TD}=F_{TK}$. Take $\theta=\{ \theta_\frn\}_\frn \in {\rm{TKS}}_r$. We have to show  
$$\sum_{\frd \subset \frn}F_{TD}(\theta)_\frd\cD_{\frn/\frd}=\sum_{\frd \subset \frn}\theta_\frd \cD_{\frn,\frn/\frd}.$$
But this is (\ref{eq8}), which has been already shown. Hence $F_{DK}\circ F_{TD}=F_{TK}$.
\\
\textbf{Step 5.}

Our final task is to prove $F_{TK}\circ F_{PT}=F_{PK}$. Take $\pkappa=\{\pkappa_\frn\}_\frn \in {\rm{PKS}}_r$. We have to prove 
\begin{equation}
\sum_{\frd\subset\frn}\pi_\frd(\pkappa_\frd)\cD_{\frn,\frn/\frd}=\sum_{\frd \subset \frn}(-1)^{\nu(\frn/\frd)}\pi_\frn(\pkappa_\frd)\prod_{\frq \in \frn/\frd}\pi_{\frn/\frq}(P_\frq). \label{eq9}
\end{equation}
By (PK5), we have for $\frd \subset \frn$
$$\pi_\frn(\pkappa_\frd)=\sum_{\frc \subset \frd}\pi_{\frd}(\pkappa_\frc)\prod_{\frq \in \frd/\frc}\pi_{\frn/\frd}(P_\frq).$$
Using this relation repeatedly, we arrange the right hand side of (\ref{eq9}), and sum up the ``coefficients" of each $\pi_\frd(\pkappa_\frd)$ to obtain 
\begin{eqnarray}
&&\sum_{\frd \subset \frn}(-1)^{\nu(\frn/\frd)}\pi_\frn(\pkappa_\frd)\prod_{\frq \in \frn/\frd}\pi_{\frn/\frq}(P_\frq) \nonumber \\
&=&\sum_{\frd\subset\frn}\left( \sum_{(\frc_1,\ldots,\frc_k)\in \Delta(\frn/\frd)}(-1)^{\nu(\frc_k)}\prod_{\frq\in \frc_k}\pi_{\frn/\frq}(P_\frq)  \prod_{\frq\in \frc_{k-1}}\pi_{\frc_k}(P_\frq)\prod_{\frq \in \frc_{k-2}}\pi_{\frc_{k-1}}(P_\frq)\cdots\prod_{\frq\in \frc_1}\pi_{\frc_2}(P_\frq) \right)\pi_\frd(\pkappa_\frd), \nonumber 
\end{eqnarray}
where 
$$\Delta(\frn/\frd)=\{ (\frc_1,\ldots,\frc_k) \ | \ \emptyset\neq \frc_i \subset \frn/\frd, \  \frn/\frd=\coprod_{i=1}^k\frc_i, \ k\in \bbZ_{\geq1} \}.$$
Hence it is sufficient to show 
$$\cD_{\frn,\frn/\frd}=\sum_{(\frc_1,\ldots,\frc_k)\in \Delta(\frn/\frd)}(-1)^{\nu(\frc_k)}\prod_{\frq\in \frc_k}\pi_{\frn/\frq}(P_\frq)\prod_{\frq\in \frc_{k-1}}\pi_{\frc_k}(P_\frq)\prod_{\frq \in \frc_{k-2}}\pi_{\frc_{k-1}}(P_\frq)\cdots\prod_{\frq\in \frc_1}\pi_{\frc_2}(P_\frq).$$
This is reduced to the following

\begin{lemma}
Suppose $A=(a_{ij})$ is a $\nu\times\nu$-matrix with entries in a commutative ring, then we have 
\begin{eqnarray}
&&(-1)^\nu|A| \nonumber \\
&=&\sum_{(C_1,\ldots,C_k)\in \Delta(\nu)}(-1)^{|C_k|}\prod_{i\in C_k}\left( \sum_{j=1}^\nu a_{ij} \right)  \prod_{i\in C_{k-1}}\left(  \sum_{j\in C_k}a_{ij}\right)\prod_{i\in C_{k-2}}\left( \sum_{j\in C_{k-1}}a_{ij} \right)\cdots\prod_{i\in C_1}\left( \sum_{j\in C_2}a_{ij} \right), \nonumber
\end{eqnarray}
where $\Delta(\nu)=\{ (C_1,\ldots,C_k) \ | \ \emptyset\neq C_i \subset \{ 1, \ldots , \nu\}, \ \{1,\ldots,\nu\}=\coprod_{i=1}^kC_i, \ k\in\bbZ_{\geq1} \}.$
\end{lemma}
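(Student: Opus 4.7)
The approach is to expand both sides as polynomials in the indeterminates $a_{ij}$ and compare monomial coefficients, using the Leibniz formula $\det A = \sum_{\sigma\in S_\nu}\sgn(\sigma)\prod_i a_{i,\sigma(i)}$. Expanding every inner sum on the right-hand side and swapping the order of summation, each resulting term is indexed by a function $f:\{1,\ldots,\nu\}\to\{1,\ldots,\nu\}$ which selects, for each row $i\in C_l$ with $l<k$, a column $f(i)\in C_{l+1}$, and for $i\in C_k$ an unrestricted column $f(i)\in\{1,\ldots,\nu\}$. Thus
\begin{equation*}
\text{RHS} \;=\; \sum_{f:\{1,\ldots,\nu\}\to\{1,\ldots,\nu\}} c(f) \prod_{i=1}^\nu a_{i,f(i)},
\end{equation*}
where $c(f):=\sum(-1)^{|C_k|}$ ranges over ordered set partitions $(C_1,\ldots,C_k)\in\Delta(\nu)$ satisfying $f(C_l)\subseteq C_{l+1}$ for all $l<k$. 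It then suffices to prove $c(f)=(-1)^\nu\sgn(f)$ when $f$ is a permutation, and $c(f)=0$ otherwise.

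The key step is to parameterize the compatible partitions by the single subset $T:=C_k$. Setting $L(i)=l$ when $i\in C_l$, the compatibility condition forces $L(f(i))=L(i)+1$ whenever $L(i)<k$, which is equivalent to saying that $k-L(i)$ equals the minimal $m\geq 0$ with $f^m(i)\in T$. Hence the entire partition is reconstructed from $T$, and a subset $T$ produces a valid compatible partition with every block nonempty if and only if each $i$ eventually reaches $T$ under iteration of $f$, equivalently, $T$ meets every cycle of the functional graph of $f$. (The ``every block nonempty'' requirement is automatic once $k$ is taken to be one more than the maximum such distance, since the distances to $T$ form a consecutive block of nonnegative integers.) Therefore
\begin{equation*}
c(f) = \sum_{\substack{T\subseteq\{1,\ldots,\nu\}\\ T\text{ meets every cycle of }f}} (-1)^{|T|}.
\end{equation*}

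The case analysis is now immediate. If $f$ is not a permutation, there is a nonempty ``tree part'' of elements lying on no cycle; writing $T = T_{\mathrm{cyc}}\sqcup T_{\mathrm{tree}}$ and summing first over $T_{\mathrm{tree}}$ yields a factor $(1-1)^{|\text{tree part}|}=0$, so $c(f)=0$. If $f=\sigma$ is a permutation with cycles of lengths $m_1,\ldots,m_r$, the sum factors cycle-by-cycle, each cycle contributing $\sum_{\emptyset\neq T_j\subseteq\text{cycle }j}(-1)^{|T_j|}=(1-1)^{m_j}-1=-1$; hence $c(\sigma)=(-1)^r$, which matches $(-1)^\nu\sgn(\sigma)=(-1)^\nu(-1)^{\nu-r}=(-1)^r$ since $\sgn(\sigma)=(-1)^{\nu-r}$. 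The main obstacle is the parameterization step: one must recognize that compatibility forces $L$ to be ``$k$ minus forward distance to $C_k$'', which collapses the intricate combinatorial sum into a sum indexed by a single cycle-meeting subset; once this structural observation is made, everything else reduces to routine $(1-1)^m$-type cancellations.
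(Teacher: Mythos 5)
Your proof is correct. It shares its skeleton with the paper's argument: both sides are expanded as polynomials in the $a_{ij}$, the coefficient of $\prod_i a_{i,\tau(i)}$ is compared for each function $\tau$, and the crucial structural fact in both cases is that a compatible ordered partition is completely determined by its top block $C_k$ (the paper uses this implicitly when it notes that $|\{(C_1,\ldots,C_k)\in\Delta(\tau)\mid C_k=C\}|$ is $0$ or $1$). Where you diverge is in how the resulting alternating sum is evaluated. You characterize the admissible top blocks outright --- $T$ works if and only if $T$ meets every cycle of the functional graph of $\tau$ --- and then compute $c(\tau)=\sum_T(-1)^{|T|}$ by factoring over the cycles and the tree part, getting $0$ for non-permutations and $(-1)^{r}=(-1)^{\nu}\sgn(\tau)$ for a permutation with $r$ cycles. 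The paper instead keeps the left-hand coefficient as a signed sum over permutations $\sigma$, reorganizes both sides as sums over candidate sets $C\supseteq\mathrm{Fix}(\tau)$ via a $(1-1)^{|\mathrm{Fix}(\sigma)\setminus\mathrm{Fix}(\tau)|}$ expansion, and for each $C$ matches the inner permutation sum against the $0$/$1$ count, killing the bad $C$'s with a sign-reversing involution $\sigma\mapsto\mu\sigma$ built from a cycle of $\tau$ disjoint from $C$. Your route buys a more transparent combinatorial core (a single inclusion--exclusion over cycle-meeting subsets, no involution and no bookkeeping with $\mathrm{Fix}$), at the cost of having to justify carefully the bijection between admissible $T$ and compatible partitions, including the automatic nonemptiness of the blocks --- which you do correctly via the observation that the distances to $T$ form an interval $\{0,\ldots,D\}$.
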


\begin{proof}
Fix a map $\tau : \{ 1,\ldots,\nu\}\to \{ 1,\ldots,\nu \}$. We see that the coefficient of $\prod_{i=1}^\nu a_{i,\tau(i)}$ of the left (resp. right) hand side of the equality in the lemma is 
$$\sum_{\sigma \in \mathfrak{S}_\nu}{\rm sgn}(\sigma) \prod_{i=1}^\nu (-\delta_{\tau(i), \sigma(i)})$$
$$\text{(resp. } \sum_{(C_1,\ldots,C_k)\in \Delta(\tau)}(-1)^{|C_k|}),$$
where $\delta_{\tau(i),\sigma(i)}$ denotes Kronecker's delta, and 
$$\Delta(\tau)=\{ (C_1,\ldots,C_k)\in \Delta(\nu) \ | \ \tau(i) \in C_{j+1} \text{ for all } 1\leq j \leq k-1 \text{ and } i\in C_j \}.$$
So it is sufficient to show that
$$\sum_{\sigma \in \mathfrak{S}_\nu}{\rm sgn}(\sigma) \prod_{i=1}^\nu (-\delta_{\tau(i), \sigma(i)})= \sum_{(C_1,\ldots,C_k)\in \Delta(\tau)}(-1)^{|C_k|}.$$
For every map $\mu : \{ 1,\ldots,\nu\} \to \{ 1, \ldots,\nu\}$ we set 
$${\rm Fix}(\mu)=\{ i\in \{1,\ldots,\nu\} \ | \ \mu(i)=i \}.$$
We compute
\begin{eqnarray}
&&\sum_{\sigma \in \mathfrak{S}_\nu}{\rm sgn}(\sigma) \prod_{i=1}^\nu (-\delta_{\tau(i), \sigma(i)})\nonumber \\
&=& \sum_{\sigma \in \mathfrak{S}_\nu}{\rm sgn}(\sigma)\prod_{i\in {\rm Fix}(\tau)}(-\delta_{i,\sigma(i)})\prod_{i\notin {\rm Fix}(\tau)}(-\delta_{\tau(i),\sigma(i)}) \nonumber \\
&=& \sum_{\sigma \in \mathfrak{S}_\nu , {\rm Fix}(\tau)\subset {\rm Fix(\sigma)}}{\rm sgn}(\sigma)(-1)^{|{\rm Fix}(\tau)|}\prod_{i\notin {\rm Fix}(\tau)}(-\delta_{\tau(i),\sigma(i)}) \nonumber \\
&=&\sum_{\sigma \in \mathfrak{S}_\nu , {\rm Fix}(\tau)\subset {\rm Fix(\sigma)}}{\rm sgn}(\sigma)(-1)^{|{\rm Fix}(\tau)|}(1-1)^{|{\rm Fix}(\sigma)\setminus{\rm Fix}(\tau)|}  \prod_{i\notin {\rm Fix}(\sigma)}(-\delta_{\tau(i),\sigma(i)}) \nonumber \\
&=&\sum_{\sigma \in \mathfrak{S}_\nu , {\rm Fix}(\tau)\subset {\rm Fix(\sigma)}}{\rm sgn}(\sigma)\sum_{D\subset {\rm Fix}(\sigma)\setminus{\rm Fix}(\tau)}(-1)^{|D|+|{\rm Fix}(\tau)|}\prod_{i\notin {\rm Fix}(\sigma)}(-\delta_{\tau(i),\sigma(i)}) \nonumber \\
&=&\sum_{{\rm Fix}(\tau)\subset C \subset \{ 1,\ldots,\nu\}}(-1)^{|C|}\sum_{\sigma\in \mathfrak{S}_\nu,C\subset {\rm Fix}(\sigma)}{\rm sgn}(\sigma)\prod_{i\notin {\rm Fix}(\sigma)}(-\delta_{\tau(i),\sigma(i)}). \nonumber
\end{eqnarray}
Note that 
\begin{eqnarray}
&&\sum_{(C_1,\ldots,C_k)\in \Delta(\tau)}(-1)^{|C_k|}\nonumber \\
&=&\sum_{C \subset \{ 1,\ldots,\nu\}}(-1)^{|C|}|\{ (C_1,\ldots,C_k)\in \Delta(\tau) \ | \ C_k=C \}|\nonumber \\
&=&\sum_{{\rm Fix}(\tau)\subset C \subset \{ 1,\ldots,\nu\}}(-1)^{|C|}|\{ (C_1,\ldots,C_k)\in \Delta(\tau) \ | \ C_k=C \}|.\nonumber 
\end{eqnarray}
Hence, it is sufficient to show for each set $C$ with ${\rm Fix}(\tau)\subset C \subset \{1,\ldots,\nu\}$ that
$$\sum_{\sigma \in \mathfrak{S}_\nu,C\subset {\rm Fix}(\sigma)}{\rm sgn}(\sigma)\prod_{i\notin {\rm Fix}(\sigma)}(-\delta_{\tau(i),\sigma(i)})=|\{ (C_1,\ldots,C_k)\in \Delta(\tau) \ | \ C_k=C \}|.$$
Note that the right hand side is equal to $1$ or $0$. Suppose first that the right hand side is equal to $1$. Then we see that $\prod_{i\notin {\rm Fix}(\sigma)}(-\delta_{\tau(i),\sigma(i)})=0$ unless $\sigma=\Id$. Indeed, suppose $\sigma \neq \Id$ and let $(C_1,\ldots,C_k)$ be the unique element of $\{ (C_1,\ldots,C_k)\in \Delta(\tau) \ | \ C_k=C \}$. Note that in this case we must have $k\geq 2$, since $C\subset {\rm Fix}(\sigma)$. We see that there exists an integer $j$ with $1\leq j \leq k-1$ such that $C_j \not\subset {\rm Fix}(\sigma)$ and $C_{j+1}\subset {\rm Fix}(\sigma)$. This shows that there exists $i\in C_j$ such that $i\notin {\rm Fix}(\sigma)$ and $\tau(i)\neq\sigma(i)$ (since $\sigma$ is injective). Hence we have shown that $\prod_{i\notin {\rm Fix}(\sigma)}(-\delta_{\tau(i),\sigma(i)})=0$ unless $\sigma=\Id$. Therefore we have 
\begin{eqnarray}
\sum_{\sigma \in \mathfrak{S}_\nu,C\subset {\rm Fix}(\sigma)}{\rm sgn}(\sigma)\prod_{i\notin {\rm Fix}(\sigma)}(-\delta_{\tau(i),\sigma(i)})&=&{\rm sgn}(\Id) \nonumber \\
&=&1 \nonumber \\
&=&|\{ (C_1,\ldots,C_k)\in \Delta(\tau) \ | \ C_k=C \}|.\nonumber
\end{eqnarray}
Next, suppose that $|\{ (C_1,\ldots,C_k)\in \Delta(\tau) \ | \ C_k=C \}|=0$. In this case we must have $C \neq \{ 1,\ldots,\nu\}$, and we see that there exist $j \in\{ 1,\ldots,\nu\}\setminus C$ and a positive integer $m$ such that $\tau^{m+1}(j)=j$, that $j,\tau(j),\ldots,\tau^{m}(j)$ are different each other and not contained in $C$. We set $\mu=(j \ \tau(j) \  \cdots \  \tau^{m}(j))\in \mathfrak{S}_\nu$. If we put 
$$\mathfrak{S}_\nu(\tau,C)=\{ \sigma \in\mathfrak{S}_\nu \ | \ C \subset {\rm Fix}(\sigma), \ \tau(i)=\sigma(i) \text{ for all } i\notin {\rm Fix}(\sigma) \},$$
then we have 
$$\sum_{\sigma \in \mathfrak{S}_\nu,C\subset {\rm Fix}(\sigma)}{\rm sgn}(\sigma)\prod_{i\notin {\rm Fix}(\sigma)}(-\delta_{\tau(i),\sigma(i)})=\sum_{\sigma \in \mathfrak{S}_\nu(\tau,C)}{\rm sgn}(\sigma)(-1)^{\nu-|{\rm Fix}(\sigma)|}.$$
It is easy to see that 
$$\{\sigma \in \mathfrak{S}_\nu(\tau,C) \ | \ \sigma(j)\neq j\}=\mu \{ \sigma\in\mathfrak{S}_\nu(\tau,C) \ | \ \sigma(j)=j \},$$
and therefore we have
$$\mathfrak{S}_\nu(\tau,C)=\mu \{ \sigma\in\mathfrak{S}_\nu(\tau,C) \ | \ \sigma(j)=j \}\sqcup \{ \sigma\in\mathfrak{S}_\nu(\tau,C) \ | \ \sigma(j)=j \}.$$
So we have 
\begin{eqnarray}
&&\sum_{\sigma \in \mathfrak{S}_\nu(\tau,C)}{\rm sgn}(\sigma)(-1)^{\nu-|{\rm Fix}(\sigma)|}\nonumber \\
&=&\sum_{\sigma \in \mathfrak{S}_\nu(\tau,C), \sigma(j)=j}{\rm sgn}(\mu\sigma)(-1)^{\nu-|{\rm Fix}(\mu\sigma)|}+\sum_{\sigma \in \mathfrak{S}_\nu(\tau,C), \sigma(j)=j}{\rm sgn}(\sigma)(-1)^{\nu-|{\rm Fix}(\sigma)|}\nonumber \\
&=&({\rm sgn}(\mu)(-1)^{m+1}+1)\sum_{\sigma \in\mathfrak{S}_\nu(\tau,C), \sigma(j)=j}{\rm sgn}(\sigma)(-1)^{\nu-|{\rm Fix}(\sigma)|} \nonumber \\
&=& ((-1)^m(-1)^{m+1}+1)\sum_{\sigma\in \mathfrak{S}_\nu(\tau,C), \sigma(j)=j}{\rm sgn}(\sigma)(-1)^{\nu-|{\rm Fix}(\sigma)|} \nonumber \\
&=&0. \nonumber
\end{eqnarray}
Hence we have 
$$\sum_{\sigma \in \mathfrak{S}_\nu,C\subset {\rm Fix}(\sigma)}{\rm sgn}(\sigma)\prod_{i\notin {\rm Fix}(\sigma)}(-\delta_{\tau(i),\sigma(i)})=0=|\{ (C_1,\ldots,C_k)\in \Delta(\tau) \ | \ C_k=C \}|.$$
This completes the proof.
\end{proof}

Hence we have completed all the steps, therefore the proof of Theorem \ref{thmKS}.

\end{proof}

\section{Regulator Kolyvagin systems} \label{secreg}  

In this section, we construct Kolyvagin systems by ``regulators". We construct an $\cO$-module ${\rm{US}}_r$, which we call ``unit systems" (see Definition \ref{defu} below), and maps from unit systems to Kolyvagin systems (see Theorem \ref{thmreg}). The idea of our method in this section is due to \cite[Appendix B]{MR1}. We keep the notations in \S \ref{secKoly}.

\begin{definition} \label{defsel}  
For $\frn \in \cN$, we define ``$\frn$-modified Selmer group" by 
$$S^\frn=\{ a\in H \ | \ v_\frq(a)=0 {\rm{ \ for \ every \ }} \frq\in\Sigma\setminus\frn \}.$$
\end{definition}

\begin{remark}
In the setting of Example \ref{ex1}, we have
$$S^n=H^1_{\cF^n}(\bbQ,A).$$
\end{remark}

\begin{definition} \label{defu}
Define a partially ordered set 
$$\sI=\{ (s,\cU) \ | \ s=(\frq_1, \frq_2,\ldots) \mbox{ : a sequence of all the elements in } \Sigma, \ \cU\subset \cN \ {\rm{satisfying}} \ (*) \},$$
where 
$$(*) \ \cU=\{\frn_1,\frn_2,\ldots \}, \frn_1\subset\frn_2\subset\cdots\subset\bigcup_{i=1}^\infty\frn_i=\Sigma, \text{ and  } \frn_i=\{\frq_1,\ldots,\frq_{\nu(\frn_i)}\} \text{ for any }  i\geq1,$$
and we define the order on $\sI$ by 
$$(s,\cU)\leq(s',\cU') \text{ if and only if $s=s'$ and $\cU' \subset \cU$}.$$
We define the module of unit systems of rank $r$ ${\rm{US}}_r$ by  
$${\rm{US}}_r=\mathop{\dlim}_{(s,\cU)\in \sI}\mathop{\ilim}_{\frn \in \cU}\bigwedge^{\nu(\frn)+r}S^{\frn},$$
where the morphisms of the inverse limit are defined by 
$$(-v_{\frq_{\nu(\frn_i)+1}})\wedge\cdots\wedge(-v_{\frq_{\nu(\frn_{i+1})}}) : \bigwedge^{\nu(\frn_{i+1})+r}H \longrightarrow 
\bigwedge^{\nu(\frn_i)+r}H,$$
and that of the direct limit by the natural projection maps. 
\end{definition}

\begin{remark}
The assumption that $\Sigma$ is countable is used here.
\end{remark}

\begin{definition} \label{defreg}
Suppose $(s,\cU)\in\sI$, say $s=(\frq_1,\frq_2,\ldots)$, $\cU$ is as $(*)$ above, and $\varepsilon=\{ \varepsilon_{\frn} \}_\frn \in \mathop{\ilim}_{\frn \in \cU}\bigwedge^{\nu(\frn)+r}S^{\frn} $. For $\frn\in\cN$, take $\frn_i\in\cU$ so that $\frn \subset \frn_i$ (this is possible since $\cU$ consists of an increasing sequence of elements in $\cN$ which covers $\Sigma$). Define regulators $R_P(\varepsilon)_\frn$, $R_T(\varepsilon)_\frn$, and $R_K(\varepsilon)_\frn$ by
$$R_P(\varepsilon)_\frn =(\psi_{P,1}^{(\frn)}\wedge \cdots\wedge\psi_{P,\nu(\frn_i)}^{(\frn)})(\varepsilon_{\frn_i}),$$
$$R_T(\varepsilon)_\frn =(\psi_{T,1}^{(\frn)}\wedge \cdots\wedge\psi_{T,\nu(\frn_i)}^{(\frn)})(\varepsilon_{\frn_i}), $$
$$R_K(\varepsilon)_\frn =(\psi_{K,1}^{(\frn)}\wedge \cdots\wedge\psi_{K,\nu(\frn_i)}^{(\frn)})(\varepsilon_{\frn_i}),$$
where 
$$\psi_{P,j}^{(\frn)} \ (\mbox{resp. }\psi_{T,j}^{(\frn)}, \mbox{ resp. }\psi_{K,j}^{(\frn)} )=
\begin{cases}
\varphi_{\frq_j} \ (\mbox{resp.} \ \varphi_{\frq_j}^\frn, \ \mbox{resp.} \ \varphi_{\frq_j}^{\frq_j}) & \mbox{if $\frq_j\in\frn$}, \\
-v_{\frq_j} & \mbox{if $\frq_j \in \frn_i/\frn$},
\end{cases}
 $$
(for the definition of $\varphi_\frq$, see Definition \ref{defrec}). One sees by definition that $R_P(\varepsilon)_\frn$, $R_T(\varepsilon)_\frn$, and $R_K(\varepsilon)_\frn$ do not depend on the choice of $\frn_i$. Indeed, if we take another $\frn_{i'} \in \cU$, say $\frn\subset\frn_i\subset\frn_{i'}$, then we have  
\begin{eqnarray}
(\psi_1^{(\frn)}\wedge\cdots\wedge\psi_{\nu(\frn_{i'})}^{(\frn)})(\varepsilon_{\frn_{i'}})&=&
(\psi_1^{(\frn)}\wedge\cdots\wedge\psi_{\nu(\frn_{i})}^{(\frn)})((-v_{\frq_{\nu(\frn_i)+1}})\wedge\cdots\wedge(-v_{\frq_{\nu(\frn_{i'})}})(\varepsilon_{\frn_{i'}})) \nonumber \\
&=&(\psi_1^{(\frn)}\wedge\cdots\wedge\psi_{\nu(\frn_{i})}^{(\frn)})(\varepsilon_{\frn_{i}}), \nonumber
\end{eqnarray}
where $\psi_j^{(\frn)}$ denotes any of $\psi_{P,j}^{(\frn)}$, $\psi_{T,j}^{(\frn)}$, and $\psi_{K,j}^{(\frn)}$. $R_P$ (resp. $R_T$ and $R_K$) define(s) a homomorophism from ${\rm{US}}_r$ to $\prod_{\frn \in \cN}\bigwedge^rH\otimes_\cO G(\Sigma)_{\nu(\frn)}$ (resp. $\prod_{\frn \in \cN}\bigwedge^rH\otimes_\cO G(\frn)_{\nu(\frn)}$).
\end{definition}

\begin{remark}
The idea of defining the unit systems and the regulators above is due to \cite[Appendix B]{MR1}.
\end{remark}

\begin{theorem} \label{thmreg}  
We have the following commutative diagram:
\[\xymatrix{
{\rm{US}}_r \ar@(d,l)[ddr]_{R_K} \ar[dr]_{R_P} \ar@(r,ul)[drr]^{R_T} & & \\  
 & {\rm{PKS}}_r \ar[d]^{F_{PK}} \ar[r]_{F_{PT}} & {\rm{TKS}}_r \ar[dl]^{F_{TK}} \\
 & {\rm{KS}}_r & \\
}\]

\end{theorem}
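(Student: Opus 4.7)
The plan is to reduce everything to three claims: that $R_P$ takes values in $\mathrm{PKS}_r$, that $F_{PT}\circ R_P = R_T$, and that $F_{PK}\circ R_P = R_K$. Once these are established, the rest follows from Theorem~\ref{thmKS}: $R_T(\mathrm{US}_r)\subset\mathrm{TKS}_r$ and $R_K(\mathrm{US}_r)\subset\mathrm{KS}_r$ because $F_{PT}$ and $F_{PK}$ carry $\mathrm{PKS}_r$ into $\mathrm{TKS}_r$ and $\mathrm{KS}_r$ respectively; and $F_{TK}\circ R_T = F_{TK}\circ F_{PT}\circ R_P = F_{PK}\circ R_P = R_K$ by Step~5 of the proof of Theorem~\ref{thmKS}.

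For $\varepsilon\in\mathrm{US}_r$ and $\frn\in\cN$, I would write $R_P(\varepsilon)_\frn = (\psi_1\wedge\cdots\wedge\psi_{\nu(\frn_i)})(\varepsilon_{\frn_i})$ with $\psi_k=\varphi_{\frq_k}$ when $\frq_k\in\frn$ and $\psi_k=-v_{\frq_k}$ when $\frq_k\in\frn_i/\frn$, and work throughout with the decomposition $\varphi_{\frq_k} = -u_{\frq_k}x_{\frq_k}-v_{\frq_k}P_{\frq_k}$. The axioms (PK1), (PK3), (PK4) then follow by elementary wedge manipulations. For (PK1), choosing $\frn_i$ large enough so that $\frq'\in\frn_i$ makes $-v_{\frq'}$ one of the $\psi_k$, so $v_{\frq'}$ wedged against the product contains a repeated factor and vanishes. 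For (PK3), $R_P(\varepsilon)_\frn$ and $R_P(\varepsilon)_{\frn/\frq}$ differ only at the position where $\varphi_\frq$ resp.\ $-v_\frq$ sits; expanding $\varphi_\frq$ and using $v_\frq\wedge v_\frq=0$ on both sides reduces the identity to a single transposition of wedge factors. For (PK4), restricting to $\Sigma\setminus\frq$ kills the $x_\frq$-piece of $\varphi_\frq$ and leaves $-v_\frq P_\frq$, and $P_\frq$ can be pulled out as a scalar from the wedge.

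The heart of the argument is the identity $F_{PK}\circ R_P = R_K$. For each $\frq_k\in\frn$, I would decompose $\varphi_{\frq_k}$ along the splitting $G(\Sigma)_1 = G(\frq_k)_1\oplus G(\frn/\frq_k)_1\oplus G(\Sigma\setminus\frn)_1$ as
$$A_k = -u_{\frq_k}x_{\frq_k},\qquad B_k = -v_{\frq_k}\pi_{\frn/\frq_k}(P_{\frq_k}),\qquad C_k = -v_{\frq_k}P_{\frq_k}|_{\Sigma\setminus\frn},$$
so $\varphi_{\frq_k}=A_k+B_k+C_k$. Expanding each $\psi_k$ inside $R_P(\varepsilon)_\frd$ and applying $\pi_\frn$, which kills every $C_k$, gives
$$\pi_\frn(R_P(\varepsilon)_\frd) = \sum_{\frf\sqcup\frg=\frd}\Bigl(\bigwedge_{\frq_k\in\frf}A_k\wedge\bigwedge_{\frq_k\in\frg}B_k\wedge\bigwedge_{\frq_k\in\frn_i/\frd}(-v_{\frq_k})\Bigr)(\varepsilon_{\frn_i}).$$
Substituting this into $F_{PK}(R_P(\varepsilon))_\frn = \sum_{\frd\subset\frn}(-1)^{\nu(\frn/\frd)}\pi_\frn(R_P(\varepsilon)_\frd)\prod_{\frq'\in\frn/\frd}\pi_{\frn/\frq'}(P_{\frq'})$, pulling the scalar $\pi_{\frn/\frq_k}(P_{\frq_k})$ out of each $B_k$ and regrouping by the set $\frf$, the remaining sum over pairs $(\frg,\frh)$ with $\frg\sqcup\frh=\frn/\frf$ collapses to $\prod_{\frq_k\in\frn/\frf}\pi_{\frn/\frq_k}(P_{\frq_k})\cdot(1-1)^{\nu(\frn/\frf)}$, which vanishes unless $\frf=\frn$. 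The surviving term is exactly $\bigl(\bigwedge_{\frq_k\in\frn}A_k\wedge\bigwedge_{\frq_k\in\frn_i/\frn}(-v_{\frq_k})\bigr)(\varepsilon_{\frn_i}) = R_K(\varepsilon)_\frn$. The identity $F_{PT}\circ R_P = R_T$ is immediate from the definitions, since $\pi_\frn(\varphi_{\frq_k})=\varphi_{\frq_k}^\frn$ and the remaining $-v_{\frq_k}$ are invariant under $\pi_\frn$. Now (PK5) is the same $A_k+B_k+C_k$ decomposition read in the other direction (grouping by which positions contribute their $C_k$-piece), and (PK2) follows because $u_\frq(R_K(\varepsilon)_\frn)$ adds a row proportional to the existing $A_{j_0}=-u_\frq x_\frq$-row, making the resulting determinant vanish by multilinearity and antisymmetry.

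The main obstacle is the combinatorial cancellation in the $F_{PK}\circ R_P = R_K$ step: the careful bookkeeping of which components of each $\varphi_{\frq_k}$ survive the various projections $\pi$ and restrictions $|_{\Sigma\setminus\cdot}$, together with the $(1-1)^k$-style collapse of the resulting sum, is the genuinely nontrivial part. Everything else is either a direct wedge manipulation or a clean appeal to Theorem~\ref{thmKS}.
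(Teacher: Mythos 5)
Your proposal is correct and follows essentially the same route as the paper: reduce to showing $R_P(\mathrm{US}_r)\subset\mathrm{PKS}_r$ together with $F_{PT}\circ R_P=R_T$ and $F_{PK}\circ R_P=R_K$, then invoke Theorem~\ref{thmKS}; the verifications of (PK1)--(PK5) by wedge manipulations (repeated factors, transposition signs, and $(\cdot)|_{\Sigma\setminus\frq}\circ\varphi_\frq=-v_\frq\cdot P_\frq$) match the paper's. The only difference is directional: for $F_{PK}\circ R_P=R_K$ the paper substitutes $\varphi_\frq^\frq=\pi_\frn\circ\varphi_\frq+v_\frq\cdot\pi_{\frn/\frq}(P_\frq)$ into $R_K$ and expands outward, whereas you expand $F_{PK}(R_P(\varepsilon))_\frn$ via the decomposition $\varphi_{\frq_k}=A_k+B_k+C_k$ and collapse by inclusion--exclusion --- the same identity read in reverse.
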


\begin{proof}
We first show the commutativity of the diagram, and then prove the image of the map $R_P$ is in ${\rm{PKS}}_r$. This completes the proof of the theorem, since by Theorem \ref{thmKS} we know that $F_{PT}({\rm{PKS}}_r) ={\rm{TKS}}_r$ and $F_{PK}({\rm{PKS}}_r)={\rm{KS}}_r$.

Take $\varep=\{\varep_\frn\}_\frn \in  \mathop{\ilim}_{\frn \in \cU}\bigwedge^{\nu(\frn)+r}S^{\frn}$. To prove the commutativity of the diagram, 
we have to show $R_T(\varep)_\frn=F_{PT}(R_P(\varep))_\frn$ and $R_K(\varep)_\frn=F_{PK}(R_P(\varep))_\frn$ for any $\frn \in \cN$ (note that 
$F_{TK}\circ F_{PT}=F_{PK}$ was already proved in Theorem \ref{thmKS}). Note that by definition $F_{PT}(R_P(\varep))_\frn=\pi_\frn(R_P(\varep)_\frn)$ (see Definition \ref{defmor}), and that $\varphi_\frq^\frn=\pi_\frn\circ\varphi_\frq$, so we have $R_T(\varep)_\frn=F_{PT}(R_P(\varep))_\frn$ by the definitions of $R_T$ and $R_P$. Next, to see $R_K(\varep)_\frn=F_{PK}(R_P(\varep))_\frn$, note that by definition 
$$F_{PK}(R_P(\varep))_\frn=\sum_{\frd \subset \frn}(-1)^{\nu(\frn/\frd)}\pi_\frn(R_P(\varep)_\frd)\prod_{\frq \in \frn/\frd}\pi_{\frn/\frq}(P_\frq),$$
and that
$$\varphi_\frq^\frq=\varphi_\frq^\frn-\varphi_\frq^{\frn/\frq}=\pi_\frn\circ\varphi_\frq-(-v_\frq\cdot\pi_{\frn/\frq}(P_\frq))$$
holds for $\frq \in \frn$, then we see again by definition $R_K(\varep)_\frn=F_{PK}(R_P(\varep))_\frn$ holds (substitute $$\varphi_\frq^\frq=\pi_\frn\circ\varphi_\frq-(-v_\frq\cdot\pi_{\frn/\frq}(P_\frq))$$ to the 
definition of $R_K$, and expand it, then we obtain 
$\sum_{\frd \subset \frn}(-1)^{\nu(\frn/\frd)}\pi_\frn(R_P(\varep)_\frd)\prod_{\frq \in \frn/\frd}\pi_{\frn/\frq}(P_\frq)$).

We prove $R_P(\varep) \in {\rm{PKS}}_r$. Take $\frn_i \in \cU$ so that $\frn\subset \frn_i$. We show that $R_P(\varep)_\frn$ satisfies axioms (PK1)-(PK5).
\\
(PK1) If $\frq \in \Sigma\setminus\frn_i$, we have 
$$v_\frq(R_P(\varep)_\frn)=(v_\frq\wedge\psi_{P,1}^{(\frn)}\wedge \cdots\wedge\psi_{P,\nu(\frn_i)}^{(\frn)})(\varepsilon_{\frn_i})=0,$$ 
since any element $a \in S^{\frn_i}$ satisfies $v_\frq(a)=0$ by definition (see Definition \ref{defsel}). If $\frq \in \frn_i\setminus\frn$, say $\frq=\frq_j$, $1\leq j \leq \nu(\frn_i)$ (recall $\frn_i=\{ \frq_1, \ldots, \frq_{\nu(\frn_i)} \},$ see $(*)$ in Definition \ref{defu}), we have 
\begin{eqnarray}
v_{\frq_j}(R_P(\varep)_\frn)&=&(v_{\frq_j}\wedge\psi_{P,1}^{(\frn)}\wedge \cdots\wedge\psi_{P,\nu(\frn_i)}^{(\frn)})(\varepsilon_{\frn_i}) \nonumber \\
&=& (v_{\frq_j}\wedge\cdots\wedge(-v_{\frq_j})\wedge \cdots)(\varep_{\frn_i}) \nonumber \\
&=&0, \nonumber
\end{eqnarray}
since $(v_{\frq_j}\wedge\cdots\wedge(-v_{\frq_j})\wedge \cdots)=0$. Hence we have $v_\frq(R_P(\varep)_\frn)=0$ for any $\frq \in \Sigma\setminus\frn$. 
\\
(PK2) Take any $\frq \in \frn$. We prove $u_{\frq}(R_K(\varep)_\frn)=0$ (note that we have already proved $F_{PK}(R_P(\varep))_\frn=R_K(\varep)_\frn$, so (PK2), that is, $u_{\frq}(F_{PK}(R_P(\varep))_\frn)=0$ is equivalent to that). We have 
\begin{eqnarray}
u_{\frq}(R_K(\varep)_\frn)&=&(u_{\frq}\wedge\cdots\wedge\varphi_\frq^\frq\wedge\cdots)(\varep_{\frn_i}) \nonumber \\
&=& (u_{\frq}\wedge\cdots\wedge(-u_\frq\cdot x_\frq)\wedge\cdots)(\varep_{\frn_i}) \nonumber \\
&=&0, \nonumber
\end{eqnarray}
where the second equality holds since $\varphi_\frq^\frq=-u_\frq\cdot x_\frq$ by definition (see Definition \ref{defrec}).
\\
(PK3) For any $\frq \in \frn$, we have 
\begin{eqnarray}
v_\frq(R_P(\varep)_\frn)&=&(v_\frq\wedge\cdots\wedge\varphi_\frq\wedge\cdots)(\varep_{\frn_i}) \nonumber \\
&=& (\varphi_\frq \wedge\cdots\wedge(-v_\frq)\wedge\cdots )(\varep_{\frn_i}) \nonumber \\
&=& \varphi_\frq(R_P(\varep)_{\frn/\frq}), \nonumber
\end{eqnarray}
where the second equality is obtained by reversing $v_\frq$ and $\varphi_\frq$ (note that then the sign is changed), and the last by the 
definition of $R_P(\varep)_{\frn/\frq}$. 
\\
(PK4) For any $\frq \in \frn$, we have 
\begin{eqnarray}
R_P(\varep)_\frn|_{\Sigma\setminus\frq}&=& ((\cdots\wedge\varphi_\frq\wedge\cdots)(\varep_{\frn_i}))|_{\Sigma\setminus\frq} \nonumber \\
&=&((\cdots\wedge(-v_\frq\cdot P_\frq)\wedge\cdots)(\varep_{\frn_i}))|_{\Sigma\setminus\frq} \nonumber \\
&=&R_P(\varep)_{\frn/\frq}|_{\Sigma\setminus\frq}\cdot P_\frq, \nonumber
\end{eqnarray}
where the second equality follows by noting $(\cdot)|_{\Sigma\setminus\frq}\circ\varphi_\frq=-v_\frq\cdot P_\frq$. 
\\
(PK5) Note that we have  
$$\varphi_\frq=\pi_\frn\circ \varphi_\frq+(-v_\frq)\cdot P_\frq|_{\Sigma\setminus\frn}$$
for any $\frq\in\frn$. Substitute this into the definition of $R_P(\varep)_\frn$, and expand it, then we have 
$$R_P(\varep)_\frn=\sum_{\frd \subset \frn}\pi_\frn(R_P(\varep)_\frd)\prod_{\frq \in \frn/\frd}P_\frq|_{\Sigma\setminus\frn}.$$
This is (PK5).

\end{proof}

\section{The proof of the main theorem} \label{secpr}

In this section, we prove Theorem \ref{mainthm} by using the general theory developed in \S\S \ref{secKoly} and \ref{secreg}. 
Recall that the setting of the main theorem is the one as in Example \ref{ex1}, so we assume in this section that 7-tuple $(\cO,\Sigma,H,t,v,u,P)$ to be as in Example \ref{ex1}. 

\begin{proposition} \label{propKS}
${\rm{KS}}_1$ and ${\rm{KS}}(A,\cF,\Sigma)$ in \cite[Definition 3.1.3]{MR1} are naturally isomorphic.

\end{proposition}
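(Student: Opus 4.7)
The plan is to define a termwise $\cO$-linear map
$$\Phi \colon \mathrm{KS}(A,\cF,\Sigma) \longrightarrow \mathrm{KS}_1, \qquad \{\kappa_\frn^{\mathrm{MR}}\}_\frn \longmapsto \{(\mathrm{id}\otimes \iota_\frn)(\kappa_\frn^{\mathrm{MR}})\}_\frn,$$
where $\iota_\frn \colon G_\frn \otimes_\bbZ \bbZ/M\bbZ \hookrightarrow G(\frn)_{\nu(\frn)}$ is the canonical injection sending $\bigotimes_{\frq|\frn}\sigma_\frq$ to $\prod_{\frq|\frn}(\sigma_\frq-1)$, and we use the inclusion $H^1_{\cF(\frn)}(\bbQ,A) \hookrightarrow H$. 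Here $\cF(\frn)$ is the Selmer structure obtained from $\cF$ by replacing the local condition at each $\frq|\frn$ by the transverse one, so that Mazur--Rubin's Kolyvagin systems live termwise in $H^1_{\cF(\frn)}(\bbQ,A)\otimes G_\frn$. Verifying bijectivity then amounts to matching the defining axioms of the two kinds of Kolyvagin system pairwise.

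The central auxiliary input is a decomposition of the top graded piece. Because $\bbZ[G_\frn] = \bigotimes_{\frq|\frn}\bbZ[G_\frq]$, the augmentation-ideal filtration is multiplicative in multi-degree, yielding (after tensoring with $\bbZ/M\bbZ$)
$$G(\frn)_{\nu(\frn)} = \bigoplus_{\sum_\frq k_\frq = \nu(\frn)}\bigotimes_{\frq|\frn}G(\frq)_{k_\frq},$$
and the projection $\pi_{\frn/\frq}$ annihilates exactly those summands with $k_\frq \geq 1$. Consequently $\bigcap_{\frq|\frn}\ker\pi_{\frn/\frq}$ is the unique summand with every $k_\frq = 1$, and this coincides with the image of $\iota_\frn$. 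Hence axiom (K4) captures exactly the requirement that the coefficient of $\kappa_\frn$ lie in $G_\frn \otimes \bbZ/M\bbZ$ rather than in the whole graded piece, pinning down the target of $\iota_\frn$ on the nose.

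With the ambient modules matched, axioms (K1)--(K3) translate to the Mazur--Rubin axioms routinely. Axiom (K1) for $\frq\notin\frn$, in view of the canonical splitting $H^1(\bbQ_\frq,A) = H^1_\ur(\bbQ_\frq,A)\oplus H^1_\tr(\bbQ_\frq,A)$ and the fact that $v_\frq$ is the projection onto the transverse summand, is exactly $(\kappa_\frn)_\frq \in H^1_\cF(\bbQ_\frq,A)$; axiom (K2) for $\frq|\frn$ is similarly equivalent to $(\kappa_\frn)_\frq \in H^1_\tr(\bbQ_\frq,A)$ because $u_\frq$ is (a fixed isomorphism composed with) the projection onto $H^1_\ur(\bbQ_\frq,A)$, and jointly these place $\kappa_\frn$ in $H^1_{\cF(\frn)}(\bbQ,A)\otimes G_\frn$. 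Finally, for $\frq|\frn$, axiom (K3) reduces to $v_\frq(\kappa_\frn) = -u_\frq(\kappa_{\frn/\frq})\, x_\frq$ after using (K1) on $\kappa_{\frn/\frq}$ to kill the $P_\frq$-term; as $v_\frq$ and $u_\frq$ were set up so that their composition realises the finite-singular isomorphism $\phi_\frq^{\mathrm{fs}}$ of \cite{MR1}, this is precisely the Mazur--Rubin compatibility. The main technical hurdle is the graded-piece step, namely establishing the multi-degree decomposition of $G(\frn)_{\nu(\frn)}$ coming from $\bbZ[G_\frn] = \bigotimes_\frq\bbZ[G_\frq]$ and identifying $\bigcap_\frq\ker\pi_{\frn/\frq}$ with the image of $\iota_\frn$; everything else is a transparent unwinding of definitions.
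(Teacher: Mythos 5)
Your proposal is correct and follows essentially the same route as the paper: the paper also uses (K4) together with Corollary \ref{cor1} and the identification $\langle\prod_{\ell\mid n}x_\ell\rangle_{\bbZ/M\bbZ}\simeq\bigotimes_{\ell\mid n}G_\ell\otimes\bbZ/M\bbZ$ (quoted from Mazur--Rubin) to pin the coefficient of $\kappa_n$ into the ``all $k_\frq=1$'' piece, and then matches (K1)--(K2) with membership in $H^1_{\cF(n)}(\bbQ,A)$ and (K3) with the finite-singular relation of \cite[Definition 3.1.3]{MR1}. The only difference is cosmetic (you build the map in the opposite direction), and your ``main technical hurdle'' is exactly the content of the cited \cite[Proposition 4.2]{MR2} together with Corollary \ref{cor1}.
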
 

\begin{proof}

We use the following fact: there is a natural isomorphism 
$$<\prod_{\ell | n}x_\ell>_{\bbZ/M\bbZ}\stackrel{\sim}{\longrightarrow}\bigotimes_{\ell|n}G_\ell \otimes \bbZ/M\bbZ \quad ; \quad \prod_{\ell|n} x_\ell \mapsto \bigotimes_{\ell|n} \sigma_\ell\otimes 1.$$
For the proof, see \cite[Proposition 4.2 (iv)]{MR2}.

Suppose $\kappa = \{\kappa_n \}_n \in {\rm{KS}}_1$. By (K4) and Corollary \ref{cor1}, we have 
$$\kappa_n \in H\otimes <\prod_{\ell | n}x_\ell>_{\bbZ/M\bbZ},$$
so from the above fact we can naturally regard 
$$\kappa_n \in H\otimes \left( \bigotimes_{\ell|n}G_\ell \right).$$
Since each $G_\ell \otimes \bbZ/M\bbZ$ is isomorphic to $\bbZ/M\bbZ$, we see that $H\otimes \left( \bigotimes_{\ell|n}G_\ell \right)$ is isomorphic to $H$. By this observation, we see that axioms (K1) and (K2) says 
$$\kappa_n \in H^1_{\cF(n)}(\bbQ,A)\otimes \left( \bigotimes_{\ell|n}G_\ell \right).$$
One sees by definition that (K3) is equivalent to the relation in \cite[(5) in Definition 3.1.3]{MR1}. 
Hence we naturally get a Kolyvagin system of \cite{MR1} from our Kolyvagin system. Conversely, the Kolyvagin systems of \cite{MR1} satisfies 
the axioms of our Kolyvagin systems (K1)-(K4), with the identification $<\prod_{\ell | n}x_\ell>_{\bbZ/M\bbZ}=\bigotimes_{\ell|n}G_\ell \otimes \bbZ/M\bbZ$. 
\end{proof}

\begin{theorem} \label{thmsurj}
Suppose the assumptions in Theorem \ref{mainthm} hold. Then the map 
$$R_K:{\rm{US}}_1 \longrightarrow {\rm{KS}}_1$$
is surjective. 
\end{theorem}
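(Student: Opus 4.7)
The plan is to leverage Mazur--Rubin's structure theorem for the module of Kolyvagin systems, combined with a direct construction of a unit system whose image under $R_K$ hits a generator.

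First I would invoke Proposition \ref{propKS} to identify ${\rm KS}_1$ with the classical Mazur--Rubin module ${\rm KS}(A,\cF,\Sigma)$. Under the hypotheses (H.0)--(H.6) together with $\chi(A,\cF)=1$, the Mazur--Rubin structure theorem (\cite[Theorem 5.2.10]{MR1}) implies that ${\rm KS}(A,\cF,\Sigma)$ is free of rank one over $\bbZ/M\bbZ$, generated by an explicit ``canonical'' Kolyvagin system $\kappa^{\mathrm{can}}$ built from the core data. Surjectivity of $R_K$ therefore reduces to exhibiting a single unit system $\varepsilon\in{\rm US}_1$ with $R_K(\varepsilon)$ equal to $\kappa^{\mathrm{can}}$ up to a unit in $(\bbZ/M\bbZ)^\times$.

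Next, using the Chebotarev density theorem together with the vanishing hypotheses (H.1)--(H.3), I would construct a sequence $s=(\frq_1,\frq_2,\ldots)$ of elements of $\Sigma$ and an increasing cofinal chain $\cU=\{\frn_1\subset\frn_2\subset\cdots\}$ in $\cN$ with $\bigcup_i\frn_i=\Sigma$ and $\frn_i=\{\frq_1,\ldots,\frq_{\nu(\frn_i)}\}$, such that each $\frn_i$ makes $(A,\cF^{\frn_i},\Sigma)$ a \emph{core Selmer triple} in the sense of \cite[Definition 4.1.8]{MR1}. Since the core rank is $1$, this forces $S^{\frn_i}=H^1_{\cF^{\frn_i}}(\bbQ,A)$ to be free of rank $\nu(\frn_i)+1$ over $\bbZ/M\bbZ$, and the new primes can be arranged so that the localization maps $v_{\frq_j}$ for $\frq_j\in\frn_{i+1}\setminus\frn_i$ are surjective on the relevant Selmer quotients. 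This is exactly the setup in which the transition maps
$(-v_{\frq_{\nu(\frn_i)+1}})\wedge\cdots\wedge(-v_{\frq_{\nu(\frn_{i+1})}})$
defining the inverse limit in ${\rm US}_1$ become isomorphisms between the top exterior powers $\bigwedge^{\nu(\frn_i)+1}S^{\frn_i}$, each of which is free of rank $1$. Picking a compatible system of generators produces an element $\varepsilon\in{\rm US}_1$ that is a generator in the limit.

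Finally I would unwind the definition of $R_K(\varepsilon)_\frn$ as the determinant of the twisted localization maps $\varphi_{\frq_j}^{\frq_j}$ evaluated on $\varepsilon_{\frn_i}$ for $\frn\subset\frn_i\in\cU$, and compare it, via the identification of Proposition \ref{propKS}, with Howard's regulator construction of Kolyvagin systems from cohomological bases in \cite[Appendix B]{MR2}. That construction is known to yield the canonical Kolyvagin system $\kappa^{\mathrm{can}}$ (this is the content of Howard's appendix under the core-rank-one hypothesis), so $R_K(\varepsilon)=\kappa^{\mathrm{can}}$ up to a unit, proving surjectivity.

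The main obstacle is the middle step: producing the Chebotarev selection so that the chain $\cU$ is simultaneously increasing, exhausts $\Sigma$, yields a core Selmer triple at every $\frn_i$, and has surjective localization transitions between successive top exterior powers of $S^{\frn_i}$. This is essentially the inductive ``stub'' argument of Mazur--Rubin in \cite[\S 4.1--\S 5.2]{MR1}, and it is where the hypotheses $\chi(A,\cF)=1$ and (H.3) are genuinely used; everything else in the proof is a formal comparison of determinants.
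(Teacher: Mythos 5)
Your proposal is correct and follows essentially the same route as the paper: identify ${\rm{KS}}_1$ with ${\rm{KS}}(A,\cF,\Sigma)$ via Proposition \ref{propKS}, invoke Mazur--Rubin's structure theory to reduce to hitting a generator, and use Howard's regulator construction to produce the required unit system. The only differences are cosmetic and bibliographic: the paper outsources your ``main obstacle'' (the Chebotarev/core-vertex chain with isomorphic transition maps) directly to the proof of \cite[Theorem B.7]{MR1}, and phrases the conclusion via surjectivity onto the stalk $\cH'(m)$ at a core vertex together with the isomorphisms $\Gamma(\cH')\simeq\cH'(m)$ \cite[Corollary 4.3.5]{MR1} and $\Gamma(\cH')\simeq{\rm{KS}}_1$ \cite[Theorem 4.4.1]{MR1}, rather than via ``free of rank one plus a canonical generator'' (for which the relevant reference in the $\bbZ/M\bbZ$ setting is Chapter 4 of \cite{MR1}, not Theorem 5.2.10).
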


\begin{proof}
First note that by Proposition \ref{propKS} we can identify ${\rm{KS}}_1$ and ${\rm{KS}}(A,\cF,\Sigma)$. 
By the proof of \cite[Theorem B.7]{MR1}, there is a $(s,\cU) \in \sI$ for each $m \in\cN$ so that the composed map 
$$\mathop{\ilim}_{n \in \cU}\bigwedge^{\nu(n)+1}S^n \stackrel{R_K}{\longrightarrow} \Im R_K \stackrel{\kappa \mapsto \kappa_m}{\longrightarrow} \cH'(m)$$
is surjective, where $\cH'=\cH'_{(A,\cF,\Sigma)}$ is the sheaf of stub Selmer modules (for the definition, see \cite[Definition 4.3.1]{MR1}).
By the proof of \cite[Corollary 4.3.5]{MR1}, if $m$ is core (see \cite[Definition 4.1.8]{MR1} for definition), then we have an isomorphism 
$$\Gamma(\cH') \stackrel{\sim}{\longrightarrow} \cH'(m) \quad;\quad \kappa \mapsto \kappa_m,$$
where $\Gamma(\cH')$ is the global section of $\cH'$ (see \cite[Definition 3.1.1]{MR1}).
By \cite[Theorem 4.4.1]{MR1}, the natural inclusion $\Gamma(\cH') \hookrightarrow {\rm{KS}}_1$ induces an isomorophism
$$\Gamma(\cH') \stackrel{\sim}{\longrightarrow} {\rm{KS}}_1.$$
Hence we have $\Im R_K ={\rm{KS}}_1$.
\end{proof}

\begin{remark} \label{remcore}
The proof of \cite[Theorem B.7]{MR1} actually shows that we can take $(s,\cU)$ satisfying above so that every $n \in \cU$ is core. We will use this fact later.
\end{remark}

\begin{proposition} \label{propreg}
Suppose $(s,\cU) \in \sI$, $\varep \in \mathop{\ilim}_{n \in \cU}\bigwedge^{\nu(n)+1}S^n$ (see Definition {\rm{\ref{defu}}}), and 
every $n\in\cU$ is core. Then we have for any $n \in \cN$
$$R_T(\varep)_n\in h_n\cR_n.$$
\end{proposition}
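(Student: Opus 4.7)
The plan is to exhibit $R_T(\varep)_n$ in the form $h_n\cdot(\varphi_{\ell_1}^n\wedge\cdots\wedge\varphi_{\ell_{\nu(n)}}^n)(\eta_n')$ for some $\eta_n'\in\bigwedge^{\nu(n)+1}S^n$, which will place it in $h_n\cR_n$ directly from Definition \ref{defreg1}. Using that the elements of $\cU$ cover $\Sigma$, first I would pick $n'\in\cU$ with $n\subset n'$ and write $n'/n=\{\frq_{j_1},\ldots,\frq_{j_k}\}$. By reordering within the wedge (at the cost of a harmless sign) and using the ``$\wedge$-as-composition'' convention of Section \ref{secalg}, one has
$$R_T(\varep)_n = \pm(\varphi_{\ell_1}^n\wedge\cdots\wedge\varphi_{\ell_{\nu(n)}}^n)(\eta_n),\qquad \eta_n := \bigl((-v_{\frq_{j_1}})\wedge\cdots\wedge(-v_{\frq_{j_k}})\bigr)(\varep_{n'}).$$
I would then check that $\eta_n\in\bigwedge^{\nu(n)+1}S^n$ by verifying $v_\ell(\eta_n)=0$ for every $\ell\in\Sigma\setminus n$: for $\ell\in n'/n$ this follows from antisymmetry of the wedge (since then $v_\ell$ would appear twice), while for $\ell\notin n'$ it follows from $\varep_{n'}\in\bigwedge^{\nu(n')+1}S^{n'}$. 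Already at this point the formula shows $R_T(\varep)_n\in\cR_n$.

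The heart of the argument, and the step I expect to be the main obstacle, is showing that $\eta_n$ is divisible by $h_n$ inside $\bigwedge^{\nu(n)+1}S^n$. In the boundary case $n=n'\in\cU$, coreness of $n$ combined with the core rank hypothesis $\chi(A,\cF)=1$ forces $H^1_{(\cF^*)_n}(\bbQ,A^*)=0$, hence $h_n=1$, and the statement is trivial since $\eta_n=\varep_n$ already lies in $\bigwedge^{\nu(n)+1}S^n$. For the general case I would induct on $|n'/n|$, stripping off one prime $\frq\in n'/n$ at a time, so that the inductive step reduces to proving that
$$-v_\frq\colon \bigwedge^{\nu(m)+1}S^m\longrightarrow \bigwedge^{\nu(m/\frq)+1}S^{m/\frq}$$
carries its source into $(h_{m/\frq}/h_m)\cdot\bigwedge^{\nu(m/\frq)+1}S^{m/\frq}$ for every intermediate $n\subset m/\frq\subset m\subset n'$. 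This is exactly the divisibility phenomenon that underlies Howard's stub Selmer module construction in \cite[Appendix B]{MR1}: it can be extracted from the four-term Poitou--Tate exact sequence relating $H^1_{\cF^m}(\bbQ,A)$, $H^1_{\cF^{m/\frq}}(\bbQ,A)$, $H^1_{(\cF^*)_m}(\bbQ,A^*)$, and $H^1_{(\cF^*)_{m/\frq}}(\bbQ,A^*)$, together with the local splitting at $\frq$ and the fact that the transverse/unramified dichotomy controls how $v_\frq$ interacts with the top exterior power.

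With the divisibility $\eta_n=h_n\,\eta_n'$ in hand, the conclusion
$$R_T(\varep)_n = \pm h_n\,(\varphi_{\ell_1}^n\wedge\cdots\wedge\varphi_{\ell_{\nu(n)}}^n)(\eta_n')\in h_n\cR_n$$
follows at once from Definition \ref{defreg1}. The whole technical weight of the proposition is thus concentrated in the single-prime divisibility lemma above; once it is available, the structural factorization supplied by Section \ref{secalg} does the rest for free, and the hypothesis that every $n\in\cU$ is core serves only to give the induction a base case in which $h=1$.
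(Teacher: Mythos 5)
Your proposal is correct and follows essentially the same route as the paper: the paper likewise reduces to a single-prime statement, proved by induction on $\nu(m/n)$, whose key step is the decomposition $\varep=\varep'\wedge\delta+\varep''$ with $\delta\in H^1_{\cF^m}(\bbQ,A)$ generating $H^1_{\cF^m}/H^1_{\cF^{m/\ell}}$ and $(v_\ell(\delta))=(h_{m/\ell}/h_m)$ by global duality, so that each application of $-v_\ell$ contributes the factor $h_{m/\ell}/h_m$ and the product telescopes to $h_n$ using $h_{n'}=1$ for core $n'$. Your ``single-prime divisibility lemma'' is exactly this step, established by exactly the mechanism you name.
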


This proposition is reduced to the following lemma (note that if $m$ is core, then $h_m=1$):

\begin{lemma}
Suppose $n=\ell_1\cdots \ell_{\nu(n)}$, $m=n\ell_{\nu(n)+1}\cdots\ell_{\nu(m)} \in \cN$. \\If $\varep \in \bigwedge^{\nu(m)+1}H^1_{\cF^m}(\bbQ,A)$, then we have 
$$(\varphi_{\ell_1}^n\wedge\cdots\wedge\varphi_{\ell_{\nu(n)}}^n\wedge(-v_{\ell_{\nu(n)+1}})\wedge\cdots\wedge(-v_{\ell_{\nu(m)}}))(\varep) \in \frac{h_n}{h_m}\cR_n.$$ 
\end{lemma}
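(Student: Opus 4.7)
The plan is to proceed by induction on $\nu(m)-\nu(n)$. The base case $m=n$ is immediate from the definition of $\cR_n$ together with $h_n/h_n=1$.

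For the inductive step, set $\ell=\ell_{\nu(m)}$ and $m'=m/\ell$. By the wedge-of-maps convention of \S \ref{secalg}, the quantity to be bounded factors as $\Phi\bigl((-v_\ell)(\varep)\bigr)$, where
$$\Phi=\varphi_{\ell_1}^n\wedge\cdots\wedge\varphi_{\ell_{\nu(n)}}^n\wedge(-v_{\ell_{\nu(n)+1}})\wedge\cdots\wedge(-v_{\ell_{\nu(m')}}).$$
It therefore suffices to show
$$(-v_\ell)\Bigl(\bigwedge^{\nu(m)+1}H^1_{\cF^m}(\bbQ,A)\Bigr)\subset \frac{h_{m'}}{h_m}\bigwedge^{\nu(m')+1}H^1_{\cF^{m'}}(\bbQ,A),$$
for then the inductive hypothesis places $\Phi$ of the resulting element in $(h_n/h_{m'})\cR_n$, and $(h_{m'}/h_m)\cdot(h_n/h_{m'})\cR_n=(h_n/h_m)\cR_n$ gives the claim.

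The main obstacle is this containment, and I plan to establish it in two steps. First, I apply Poitou--Tate global duality (in the form of Theorem~2.3.4 of \cite{MR1}) to the pair $\cF^{m'}\subset\cF^m$; these Selmer structures differ only at $\ell$, the local factor at $\ell$ is $|H^1(\bbQ_\ell,A)|/|H^1_\ur(\bbQ_\ell,A)|=M$, and combining with the tautological exact sequence
$$0\to H^1_{\cF^{m'}}(\bbQ,A)\to H^1_{\cF^m}(\bbQ,A)\xrightarrow{v_\ell}\bbZ/M\bbZ$$
pins down $\Im(v_\ell|_{H^1_{\cF^m}})=(h_{m'}/h_m)\bbZ/M\bbZ$. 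Second, I fix $a_0\in H^1_{\cF^m}$ with $v_\ell(a_0)=h_{m'}/h_m$, and decompose each $a_i\in H^1_{\cF^m}$ as $a_i=b_ia_0+a_i'$ with $a_i'\in H^1_{\cF^{m'}}$. Expanding $a_1\wedge\cdots\wedge a_r$, all terms containing two or more copies of $a_0$ vanish by antisymmetry, leaving $a_1\wedge\cdots\wedge a_r=a_0\wedge\alpha+\beta$ with $\alpha\in\bigwedge^{r-1}H^1_{\cF^{m'}}$ and $\beta\in\bigwedge^{r}H^1_{\cF^{m'}}$. Since $v_\ell$ vanishes on $H^1_{\cF^{m'}}$, applying $(-v_\ell)$ kills $\beta$ and every interior term in $\alpha$, leaving the single contribution $-v_\ell(a_0)\alpha=-(h_{m'}/h_m)\alpha$, which is the desired factorization.
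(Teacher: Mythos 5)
Your proof is correct and follows essentially the same route as the paper's: the same induction on $\nu(m)-\nu(n)$, the same use of the exact sequence $0\to H^1_{\cF^{m'}}\to H^1_{\cF^m}\xrightarrow{v_\ell}\bbZ/M\bbZ$ together with global duality to identify $\Im(v_\ell|_{H^1_{\cF^m}})$ with $(h_{m'}/h_m)$, and the same decomposition of $\varep$ into a part wedged with a single generator $a_0$ (the paper's $\delta$) plus a part lying entirely in $\bigwedge^{\nu(m)+1}H^1_{\cF^{m'}}(\bbQ,A)$. Your write-up is if anything slightly more explicit about the multilinear expansion step that the paper compresses into its ``claim''.
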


\begin{proof}
We prove by induction on $\nu(m/n)$. When $\nu(m/n)=0$, i.e. $m=n$, it is clear by the definition of $\cR_n$ (see Definition \ref{defreg1}). When $\nu(m/n)>0$, put $\ell=\ell_{\nu(m)}$ for simplicity. 

We claim that there are $\varep' \in \bigwedge^{\nu(m/\ell)+1}H^1_{\cF^{m/\ell}}(\bbQ,A)$, $\varep'' \in \bigwedge^{\nu(m)+1}H^1_{\cF^{m/\ell}}(\bbQ,A)$, and $\delta \in H^1_{\cF^m}(\bbQ,A)$ satisfying 
$$\varep=\varep'\wedge\delta + \varep'', \mbox{ and } (v_\ell(\delta))=(\frac{h_{m/\ell}}{h_m}) \ (\mbox{as ideal of }\bbZ/M\bbZ).$$
This claim is shown as follows. First note that by definition we have an exact sequence
$$0 \longrightarrow H^1_{\cF^{m/\ell}}(\bbQ,A) \longrightarrow H^1_{\cF^m}(\bbQ,A) \stackrel{v_\ell}{\longrightarrow} \bbZ/M\bbZ.$$
So we see that there is a $\delta \in H^1_{\cF^m}(\bbQ,A)$ such that $\bar \delta$ generates $H^1_{\cF^m}(\bbQ,A)/H^1_{\cF^{m/\ell}}(\bbQ,A)$. 
Since $v_\ell(\delta)$ generates $\Im(H^1_{\cF^m}(\bbQ,A) \stackrel{v_\ell}{\longrightarrow} \bbZ/M\bbZ) $, we have by the global duality (see \cite[Theorem 2.3.4]{MR1} or \cite[Theorem 1.7.3]{R}) 
$$(v_\ell(\delta))=(\frac{h_{m/\ell}}{h_m}).$$
Since $\bar \delta$ generates $H^1_{\cF^m}(\bbQ,A)/H^1_{\cF^{m/\ell}}(\bbQ,A)$, any $\eta \in H^1_{\cF^m}(\bbQ,A)$ can be written 
as the following form:
$\eta=\eta'+a\delta$, where $\eta'\in H^1_{\cF^{m/\ell}}(\bbQ,A)$ and $a\in\bbZ$. Hence $\varep \in \bigwedge^{\nu(m)+1}H^1_{\cF^m}(\bbQ,A)$ can be written as claimed above. 

By the claim, we have 
\begin{eqnarray}
&& (\varphi_{\ell_1}^n\wedge\cdots\wedge\varphi_{\ell_{\nu(n)}}^n\wedge(-v_{\ell_{\nu(n)+1}})\wedge\cdots\wedge(-v_{\ell_{\nu(m)}}))(\varep) \nonumber \\
&=& \pm v_\ell(\delta) (\varphi_{\ell_1}^n\wedge\cdots\wedge\varphi_{\ell_{\nu(n)}}^n\wedge(-v_{\ell_{\nu(n)+1}})\wedge\cdots\wedge(-v_{\ell_{\nu(m/\ell)}}))(\varep') \nonumber \\
&\in & v_\ell(\delta)\cdot \frac{h_n}{h_{m/\ell}}\cR_n =\frac{h_n}{h_m}\cR_n, \nonumber
\end{eqnarray}
where the first equality follows from that $v_\ell(\varep)=\pm v_\ell(\delta)\varep'$ (by definition), and the next from the inductive hypothesis. Hence we have completed the proof.
\end{proof}



\begin{proposition} \label{proptheta}
$$\{\theta_n(c)\}_n \in {\rm{TKS}}_1.$$

\end{proposition}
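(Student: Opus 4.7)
The plan is to invoke the bijection $F_{TD}: \mathrm{TKS}_1 \xrightarrow{\sim} \mathrm{DKS}_1$ established in Theorem \ref{thmKS} and reduce the claim to an assertion about Kolyvagin's derivative classes. Set
$$\tilde{\kappa}_n := \kappa_n' \otimes \prod_{\ell | n}(\sigma_\ell - 1) \in H \otimes G(n)_{\nu(n)},$$
where $\kappa_n'$ is the derivative class appearing in the proof of Proposition \ref{proptheta1}. Under the identification $\kappa_n' \leftrightarrow \tilde{\kappa}_n$, the inductive relation (\ref{eq10}) satisfied by $\theta_n(c)$ coincides exactly with the inductive formula (\ref{eq5}) defining $G_{TD} = F_{TD}^{-1}$ in Step 2.2 of Theorem \ref{thmKS}. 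Consequently, once we verify that $\tilde{\kappa} = \{\tilde{\kappa}_n\}_n \in \mathrm{DKS}_1$, bijectivity of $F_{TD}$ yields $\theta(c) = G_{TD}(\tilde{\kappa}) \in \mathrm{TKS}_1$, as desired.

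Three of the four derived-Kolyvagin axioms are essentially formal. Axiom (DK1) is immediate from the standard property $\kappa_n' \in H^1_{\cF^n}(\bbQ, A)$, which forces $v_\ell(\kappa_n') = 0$ for $\ell \nmid n$. Axiom (DK4) holds because $\sigma_\ell - 1 \in I_\ell$ is annihilated by $\pi_{n/\ell}$, so the product $\prod_{\ell'|n}(\sigma_{\ell'}-1)$ vanishes under this projection. Axiom (DK3), on expanding
$$\varphi_\ell(\tilde{\kappa}_{n/\ell}) = \bigl[-u_\ell(\kappa_{n/\ell}')(\sigma_\ell - 1) - v_\ell(\kappa_{n/\ell}')\,P_\ell(\Fr_\ell)\bigr]\prod_{\ell'|n/\ell}(\sigma_{\ell'}-1)$$
by Definition \ref{defrec} and using that $v_\ell(\kappa_{n/\ell}') = 0$ (already covered by (DK1) since $\ell \nmid n/\ell$), reduces to the classical Kolyvagin finite-singular congruence $v_\ell(\kappa_n') = -u_\ell(\kappa_{n/\ell}')$ for $\ell | n$, a standard consequence of the Euler system axiom (cf.\ \cite[Theorem 4.5.4]{R}).

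The hard part will be axiom (DK2): $u_\ell(\sum_{d \subset n}\tilde{\kappa}_d \cdot \cD_{n/d}) = 0$ for every $\ell | n$. The transverse condition $u_\ell(\kappa_d') = 0$ whenever $\ell | d$ collapses the sum to indices $d \subset n/\ell$. Applying Kolyvagin's congruence in the form $u_\ell(\kappa_d') = -v_\ell(\kappa_{d\ell}')$ (valid for $\ell \nmid d$) and reindexing via $e = d\ell \subset n$, the expression rewrites as a combination of terms $v_\ell(\kappa_e')\prod_{\ell'|e/\ell}(\sigma_{\ell'}-1)\cdot \cD_{n\ell/e}$. The vanishing of this combination is a combinatorial identity, to be unpacked using the recursive behavior of $\cD_{\frn,\frd}$ and $\cD_\frd$ under $\pi_\frq$ established in Proposition \ref{propd} together with the operators $s_{\frm,\frn}$ of Lemma \ref{lem1}(ii); in spirit the argument parallels the derivation of identity (\ref{eq8}) in Step 2.1 of Theorem \ref{thmKS}. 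Completing this combinatorial/sign-tracking verification is the technical core of the proof.
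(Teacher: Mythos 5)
Your overall reduction is exactly the paper's: use injectivity of $F_{TD}$ and the identity (\ref{eq10}) to identify $F_{TD}(\{\theta_n(c)\}_n)_n$ with $\kappa_n'\otimes\prod_{\ell|n}(\sigma_\ell-1)$, and then verify the axioms of ${\rm{DKS}}_1$ for the derivative classes. Your treatment of (DK1), (DK3) and (DK4) matches the paper (which cites \cite[Theorems 4.5.1 and 4.5.4]{R} for the first two). The problem is (DK2), which you yourself flag as ``to be unpacked'': this is not a routine sign-tracking exercise left to the reader, it is the entire content of the proposition, and your sketch of it starts from a false premise. You assert the ``transverse condition $u_\ell(\kappa_d')=0$ whenever $\ell\mid d$,'' but Kolyvagin's derivative classes do \emph{not} satisfy this: the failure of $\kappa_n'$ to lie in the transverse subspace at primes $\ell\mid n$ is precisely why the axiom (DK2) involves the corrected sum $\sum_{d\subset n}\kappa_d'\,\cD_{n/d}$ rather than $\kappa_n'$ alone (if $u_\ell(\kappa_n')$ vanished for $\ell\mid n$, the derivative classes would already form a Kolyvagin system in the sense of Definition \ref{defkoly} and no determinant corrections would be needed). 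Consequently the proposed collapse of the sum to $d\subset n/\ell$ and the subsequent reindexing do not get off the ground.

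The paper closes this gap by appeal to Mazur--Rubin: the determinant $\cD_{n/d}$ expands over permutations so that
$$\sum_{d\mid n}\Bigl(\kappa_d'\otimes\prod_{\ell\mid d}(\sigma_\ell-1)\Bigr)\cD_{n/d}
=\sum_{\tau\in\mathfrak S(n)}\sgn(\tau)\Bigl(\kappa'_{d_\tau}\otimes\prod_{\ell\mid d_\tau}(\sigma_\ell-1)\Bigr)\prod_{\ell\mid n/d_\tau}\pi_\ell\bigl(P_{\tau(\ell)}(\Fr_{\tau(\ell)}^{-1})\bigr),$$
with $d_\tau=\prod_{\tau(\ell)=\ell}\ell$, and the right-hand side is exactly the modified class shown in \cite[Appendix A, proof of Theorem 3.2.4]{MR1} to have vanishing unramified component at every $\ell\mid n$. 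To complete your proof you would either need to reproduce that computation or cite it; as written, the key axiom remains unverified.
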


\begin{proof}
By (\ref{eq10}) in the proof of Proposition \ref{proptheta1}, we have 
$$\sum_{d|n}(-1)^{\nu(n/d)}\theta_d(c)\prod_{\ell|n/d}P_\ell(\Fr_\ell)=\kappa'_n\otimes\prod_{\ell|n}(\sigma_\ell-1).$$
Note that the left hand side is equal to $F_{TD}(\{\theta_n(c)\}_n)_n$ (see definition \ref{defmor}).
By Theorem \ref{thmKS} and Proposition \ref{propinj}, it is reduced to show 
$$\{ \kappa'_n\otimes\prod_{\ell|n}(\sigma_\ell-1) \}_n \in {\rm{DKS}}_1.$$
(DK1) and (DK3) are well-known properties of Kolyvagin's derivatives (see \cite[Theorem 4.5.1 and Theorem 4.5.4]{R}). (DK2) is shown in \cite[Proof of Theorem 3.2.4 in Appendix A]{MR1} 
(note that 
\begin{eqnarray}
\sum_{d|n}\left( \kappa'_d \otimes \prod_{\ell |d}(\sigma_\ell -1) \right)\cD_{n/d}
&=&\sum_{\tau \in \mathfrak S (n)}\sgn(\tau) \left(\kappa'_{d_\tau} \otimes \prod_{\ell | d_\tau}(\sigma_\ell -1) \right)\prod_{\ell|n/{d_\tau}}\pi_\ell(P_{\tau(\ell)}(\Fr_{\tau(\ell)}^{-1})), \nonumber
\end{eqnarray}
where $\mathfrak S(n)$ is the set of permutations of the prime divisors of $n$, and $d_\tau=\prod_{\tau(\ell)=\ell}\ell$). (DK4) is clearly satisfied.
\end{proof}

\begin{remark} \label{remkappa}
From the above, we see that the Kolyvagin's derivative class $\kappa_n'$ satisfies 
$$\kappa_n' \otimes \prod_{\ell|n}(\sigma_\ell -1) = s_n(\theta_n(c))$$
(for the definition of $s_n$, see Definition \ref{defs}). So if we admit Theorem \ref{mainthm}, then we have 
$$s_n(\theta_n(c)) \in h_n s_n(\cR_n) \subset h_n H^1_{\cF^n}(\bbQ, A) \otimes < \prod_{\ell|n}(\sigma_\ell-1)>.$$
Hence we have the following upper bound of $h_n$:
$$\ord_p(h_n) \leq \sup \{ m \ | \ \kappa_n' \in p^m H^1_{\cF^n}(\bbQ,A) \}.$$
This generalizes Corollary \ref{corrubin}, since $\kappa_1'=c_\bbQ$.
\end{remark}

Now we prove the main theorem.

\begin{proof}[Proof of Theorem \ref{mainthm}]
By Proposition \ref{proptheta}, Theorem \ref{thmKS}, Theorem \ref{thmreg} and Theorem \ref{thmsurj}, there exists $\varep \in\mathop{\ilim}_{n \in \cU}\bigwedge^{\nu(n)+1}S^n$ such that 
$$R_T(\varep)_n=\theta_n(c).$$
Here note that by Remark \ref{remcore} every $n\in\cU$ is taken to be core. Hence by Proposition \ref{propreg} we have 
$$R_T(\varep)_n \in h_n \cR_n.$$
This completes the proof.
\end{proof}

\section*{Acknowledgement}
The author would like to thank Professor Masato Kurihara for giving him many valuable suggestions and helpful advice.

\end{document}